\numberwithin{equation}{section}
\newtheorem{thm}{Theorem}[section]
\newtheorem{lem}[thm]{Lemma}
\newtheorem{cor}[thm]{Corollary}
\newtheorem{prop}[thm]{Proposition}
\newtheorem{exmp}[thm]{Example}
\newtheorem{rem}[thm]{Remark}
\def\leq{\leqslant}
\def\wo{\overline}
\def\ul{\underline}
\def\wt{\widetilde}
\def\Z{{\mathbb Z}}
\def\C{{\mathbb C}}
\def\N{\mathbb{N}}
\newcommand{\cc}{\mathrm{c}}
\newcommand{\pos}{\mathrm{p}}
\def\cT{\mathcal{T}}
\def\bv{\mathbf{v}}
\def\rv{\mathrm{v}}
\newcommand{\blambda}{\boldsymbol{\lambda}}
\newcommand{\bmu}{\boldsymbol{\mu}}
\newcommand{\btheta}{\boldsymbol{\rho}}
\newcommand{\YH}{{\rm Y}_{d,n}(q)}
\newcommand{\FTL}{{\rm FTL}_{d,n}(q)}
\newcommand{\Irr}{{\rm Irr}}
\title[Representation theory and an isomorphism theorem for the Framisation of the Temperley--Lieb algebra]
  {Representation theory and an isomorphism theorem for  the Framisation of the Temperley--Lieb algebra}
\author{Maria Chlouveraki}
\author{Guillaume Pouchin}
\keywords{Temperley--Lieb algebra, Yokonuma--Hecke algebra, Framisation of the Temperley--Lieb algebra}
\subjclass[2010]{20C08, 05E10, 16S80}
\thanks{We are grateful to  Dimoklis Goundaroulis, Jes\'us Juyumaya, Aristides Kontogeorgis and Sofia Lambropoulou for introducing us to this whole range of problems, and for many fruitful conversations. We would also like to thank Tam\'as Hausel for his interesting questions that led us to provide some extra results on the Yokonuma--Hecke algebra.
Finally, we would like to thank Lo\"ic Poulain d'Andecy and Nicolas Jacon for our useful conversations on the second part of this paper. 
This research has been co-financed by the European Union (European
Social Fund - ESF) and Greek national funds through the Operational Program
``Education and Lifelong Learning" of the National Strategic Reference
Framework (NSRF) - Research Funding Program: THALIS. The second author gratefully acknowledges  financial support of EPSRC through the  grant  Ep/I02610x/1.}
\begin{document}

\maketitle

\begin{abstract}
In this paper, we describe the irreducible representations and give a dimension formula for the Framisation of the Temperley--Lieb algebra. We then prove that the Framisation of the Temperley--Lieb algebra is isomorphic to a direct sum of matrix algebras over tensor products of classical Temperley--Lieb algebras. This allows us to construct a basis for it. We also study in a similar way the Complex Reflection Temperley--Lieb algebra.
\end{abstract}

\section{Introduction}

The Temperley--Lieb algebra was introduced by Temperley and Lieb in \cite{TL} for its applications in statistical mechanics. It was later shown by Jones \cite{jo1,jo} that it can be obtained as a quotient of the Iwahori--Hecke algebra of type $A$. Both algebras depend on a parameter $q$. Jones showed that there exists a unique Markov trace, called the Ocneanu trace, on the Iwahori--Hecke algebra, which depends on a parameter $z$. For a specific value of $z$, the Ocneanu trace passes to the Temperley--Lieb algebra.
Jones used the Ocneanu trace on the Temperley--Lieb algebra to define a polynomial knot invariant, the famous Jones polynomial. Using  the Ocneanu trace as defined  originally on the Iwahori--Hecke algebra of type $A$ yields another famous polynomial invariant, the HOMFLYPT polynomial, which is also known as the $2$-variable Jones polynomial (the $2$ variables being $q$ and $z$).

Yokonuma--Hecke algebras were introduced by Yokonuma \cite{yo} as generalisations of Iwahori--Hecke algebras in the context of finite Chevalley groups. The Yokonuma--Hecke algebra of type $A$ is the centraliser algebra associated to the permutation representation  with respect to a maximal unipotent subgroup of the general linear  group over a finite field. 
Juyumaya has given a generic presentation for this algebra, depending on a parameter $q$, and defined a Markov trace on it, the latter depending on several parameters \cite{ju,ju2,ju3}. This trace was subsequently used by Juyumaya and Lambropoulou for the construction of invariants for framed knots and links \cite{jula1,jula2}. They later showed that these invariants can be also adapted for classical and singular knots and links \cite{jula3, jula4}. The next step was to construct an analogue of the Temperley--Lieb algebra in this case.

As it is explained in more detail in \cite{jula5}, where the technique of framisation is thoroughly discussed, three possible candidates arose. 
The first candidate was the Yokonuma--Temperley--Lieb algebra, which was defined in \cite{gjkl1} 
as the quotient of the Yokonuma--Hecke algebra by exactly the same ideal as the one used by Jones in the classical case. We studied the representation theory of this algebra and constructed a basis for it in \cite{CP}. The values of the parameters for which Juyumaya's Markov trace passes to the Yokonuma--Temperley--Lieb algebra are given in \cite{gjkl1}. 
For these values, the invariants for classical knots and links obtained from the Yokonuma--Temperley--Lieb algebra are equivalent to the Jones polynomial.

A second candidate, which is more interesting knot theoretically, was suggested in \cite{gjkl2}. This is the Framisation of the Temperley--Lieb algebra, which we study in this paper. The Framisation of the Temperley--Lieb algebra is defined in  a subtler way than the Yokonuma--Temperley--Lieb algebra, as the quotient of the Yokonuma--Hecke algebra by a more elaborate ideal, and it is larger than the Yokonuma--Temperley--Lieb algebra. The values of the parameters for which Juyumaya's Markov trace passes to this quotient are given in \cite{gjkl2}. It was recently shown that the invariants for classical links obtained from the Yokonuma--Hecke algebra are stronger than the HOMFLYPT polynomial \cite{CJKL}. It turns out that, in a similar way,  the invariants for classical links obtained from the Framisation of the Temperley--Lieb algebra are stronger than the Jones polynomial.

The third candidate is the  Complex Reflection Temperley--Lieb algebra, defined also in \cite{gjkl2}, which is larger than the Framisation of the Temperley--Lieb algebra, but provides the same knot theoretical information.

In the first part of this paper, we study the representation theory of the Framisation of the Temperley--Lieb algebra. In Theorem \ref{res1} we give a complete description of its irreducible representations, by showing which irreducible representations of the Yokonuma--Hecke algebra pass to the quotient. The representations  of the Yokonuma--Hecke algebra of type $A$ were first studied by Thiem \cite{Thi1,Thi2,Thi3}, but here we use their explicit  description given later in \cite{ChPo}. Our result generalises in a natural way the analogous result in the classical case. We then use the dimensions of the irreducible representations of the Framisation of the Temperley--Lieb algebra in order to compute the dimension of the algebra. We deduce a combinatorial formula involving Catalan numbers, given in Theorem \ref{res2}.

We also take this opportunity to write down the relations between three types of generators used in the literature so far, and show that the Yokonuma--Hecke algebra is split semisimple over a smaller field than the one considered in \cite{ChPo}.

In the second part of this paper, we provide an algebraic connection between the Framisation of the Temperley--Lieb algebra and the Temperley--Lieb algebra. Lusztig \cite{Lu} has shown that Yokonuma--Hecke algebras 
 are isomorphic to direct sums of matrix algebras over certain subalgebras of classical Iwahori--Hecke algebras. For the Yokonuma--Hecke algebra of type $A$, these are all  Iwahori--Hecke algebras  of type $A$. This result was reproved recently in \cite{JP} using Juyumaya's presentation for the Yokonuma--Hecke algebra of type $A$.  Another proof of the same result has been given recently in \cite{ER}. In Theorem \ref{our iso}, we show that the isomorphism can be defined over a smaller ring than the one considered in all three papers. Then, in Theorem \ref{thm-iso-CP}, we prove that the Framisation of the Temperley--Lieb algebra is isomorphic to a direct sum of matrix algebras over tensor products of Temperley--Lieb algebras. Using this result, we provide a basis for the Framisation of the Temperley--Lieb algebra in Proposition \ref{FTLbasis}.

We would like to remark that these isomorphism theorems discussed above render the fact that the invariants for classical links arising from the Yokonuma--Hecke algebra and the Framisation of the Temperley--Lieb algebra are stronger than the HOMFLYPT and the Jones polynomial respectively even more surprising and intriguing.

Finally, in the last section, we study the representation theory and give a dimension formula for the Complex Reflection Temperley--Lieb algebra. We then prove that the Complex Reflection Temperley--Lieb algebra is isomorphic to  a direct sum of matrix algebras over tensor products of Temperley--Lieb and Iwahori--Hecke algebras. We also give  a basis for it. Our results in this section, combined with the results on the Famisation of the Temperley--Lieb algebra in the previous sections and on the Yokonuma--Temperley--Lieb algebra in \cite{CP}, provide a clear indication that the Framisation of the Temperley--Lieb algebra is the natural analogue of the Temperley--Lieb algebra in the context of Yokonuma--Hecke algebras.

\section{Representation theory of the Temperley--Lieb algebra}

In this section, we recall the definition of the Temperley--Lieb algebra as a quotient of the Iwahori--Hecke algebra of type $A$ given by Jones \cite{jo}, and some classical results on its representation theory.

\subsection{The Iwahori--Hecke algebra $\mathcal{H}_n(q)$}
Let  $n \in \N$ and let $q $ be an indeterminate.  The \emph{Iwahori--Hecke algebra of type} $A$, denoted by $\mathcal{H}_n(q)$, is a $\C[q,q^{-1}]$-associative algebra generated by the elements
$$
 G_1, \ldots, G_{n-1}
$$
subject to the following braid relations:
\begin{equation}\label{modularh}
\begin{array}{rcll}
 G_iG_j & = & G_jG_i & \mbox{for all  $i,j=1,\ldots,n-1$ with $\vert i-j\vert > 1$,}\\
 G_iG_{i+1}G_i & = & G_{i+1}G_iG_{i+1} & \mbox{for  all  $i=1,\ldots,n-2$,}\\
\end{array}
\end{equation}
 together with the quadratic relations:
\begin{equation}\label{quadrh}
G_i^2 =q+ (q-1) G_i\qquad \mbox{for  all $i=1,\ldots,n-1$.}
\end{equation}

\begin{rem}{\rm
If we specialise $q$ to $1$, the defining relations (\ref{modularh})--(\ref{quadrh}) become the defining relations for the symmetric group $\mathfrak{S}_n$. Thus, the algebra $\mathcal{H}_n(q)$ is a deformation of $\C[\mathfrak{S}_n]$, the group algebra of $\mathfrak{S}_n$ over $\C$.}
\end{rem}

Let $w \in \mathfrak{S}_n$ and let $w=s_{i_1}s_{i_2}\ldots s_{i_r}$ be a reduced expression for $w$, where $s_i$ denotes the transposition $(i,i+1)$. By Matsumoto's lemma, the element $G_w:=G_{i_1}G_{i_2}\ldots G_{i_r}$ is well defined. It is well-known that the set 
$\{G_w\}_{w \in \mathfrak{S}_n}$ forms a basis of  $\mathcal{H}_n(q)$ over $\C[q,q^{-1}]$, which is called the \emph{standard basis}. In particular, $\mathcal{H}_n(q)$
is a free $\C[q,q^{-1}]$-module of rank $n!$.

\subsection{The  Temperley--Lieb algebra ${\rm TL}_{n}(q)$} 
Let $i=1,\ldots,n-2$. We set
$$G_{i,i+1}:=1 + G_i + G_{i+1} + G_iG_{i+1} +G_{i+1}G_i + G_iG_{i+1}G_i =\sum_{w \in {\langle s_i,s_{i+1}\rangle}}G_w.$$
We define the  \emph{Temperley--Lieb algebra} ${\rm TL}_{n}(q)$ to be the quotient 
$\mathcal{H}_n(q)/I_n$, where $I_n$ is the ideal generated by the element $G_{1,2}$ (if $n \leq 2$, we take $I_n = \{0\}$).
We have
$ G_{i,i+1} \in I_n$  for all $i=1,\ldots,n-2$, since
$$G_{i,i+1} = (G_1G_2 \ldots G_{n-1})^{i-1}\, G_{1,2}\, (G_1G_2 \ldots G_{n-1})^{-(i-1)}.$$

\subsection{Combinatorics of partitions}
Let $\lambda\vdash n$ be a partition of $n$, that is, $\lambda=(\lambda_1,\dots,\lambda_k)$ is a family of  positive integers such that $\lambda_1\geq\lambda_2\geq\dots\geq\lambda_k \geq 1$ and $|\lambda|:=\lambda_1+\dots+\lambda_k=n$. We also say that $\lambda$ is a partition {\em of size} $n$. 

We identify partitions with their Young diagrams: the Young diagram of $\lambda$ is a left-justified array of $k$ rows such that
the $j$-th row contains  $\lambda_j$ {\em nodes } for all $j=1,\dots,k$. We write $\rho=(x,y)$ for the node in row $x$ and column $y$. 

For a node $\rho$ lying in the line $x$ and the column $y$ of $\lambda$ (that is, $\rho=(x,y)$), we define $\cc(\rho):=q^{y-x}$. The number  $\cc(\rho)$ is called the \emph{(quantum) content} of $\rho$. 

Now, a  {\em tableau of shape $\lambda$} is a bijection between the set $\{1,\dots,n\}$ and the set of nodes in $\lambda$. In other words, a tableau of shape $\lambda$ is obtained by placing the numbers $1,\dots,n$ in the nodes of $\lambda$. 
The \emph{size} of a tableau of shape $\lambda$ is $n$, that is, the size of $\lambda$.  
 A tableau is {\em standard} if its entries  increase along each row and down 
 each column of the Young diagram of $\lambda$.

For a tableau ${\mathcal{T}}$, we denote by $\cc({\mathcal{T}}|i)$  the quantum content  of the node with the number $i$ in it. For example, for the standard tableau ${\mathcal{T}}^{^{\phantom{A}}}\!\!\!\!={\textrm{$
\,\fbox{\scriptsize{$1$}}\fbox{\scriptsize{$2$}}\fbox{\scriptsize{$3$}}\,$}}$ of size $3$, we have
\[\cc({\mathcal{T}}|1)=1\,,\ \ \cc({\mathcal{T}}|2)=q\,\ \ \text{and} \, \ \ \cc({\mathcal{T}}|3)=q^2\,.\]

For any tableau $\cT$ of size $n$ and any permutation $\sigma \in \mathfrak{S}_n$, we denote by $\cT^{\sigma}$ the tableau obtained from $\cT$ by applying the permutation $\sigma$ on the numbers contained in the nodes of $\cT$. We have 
$$
\cc(\cT^{\sigma}|i)=\cc\bigl(\cT|\sigma^{-1}(i)\bigr)\ \ \ \ \ \ \text{for all $i=1,\dots,n$.}
$$
Note that if the tableau $\cT$ is standard, the tableau $\cT^{\sigma}$ is not necessarily standard. 

\subsection{Formulas for the irreducible representations of $\C(q)\mathcal{H}_n(q)$}
 We set $\C(q)\mathcal{H}_n(q):=\C(q) \otimes_{\C[q,q^{-1}]}\mathcal{H}_n(q)$. 
 Let  $\mathcal{P}(n)$ be the set of all partitions of $n$, and let
 $\lambda \in \mathcal{P}(n)$. Let $V_{\lambda}$ be a  $\C(q)$-vector space with a basis $\{\rv_{_{\cT}}\}$ indexed by the standard tableaux of shape $\lambda$. We set $\rv_{_{\cT}}:=0$ for any non-standard tableau $\cT$ of shape $\lambda$. We have the following result on the representations of $\C(q)\mathcal{H}_n(q)$, established in \cite{Hoe}:

\begin{thm}\label{thm-rep-hecke} 
Let $\cT$ be a standard tableau of shape $\lambda  \in \mathcal{P}(n)$. For brevity, we set  $\cc_i:=\cc(\cT|i)$ for $i=1,\dots,n$. 
The vector space $V_{\lambda}$ is an irreducible representation of $\C(q)\mathcal{H}_n(q)$ with the action of the generators on the basis element $\rv_{_{\cT}}$ defined as follows:
for $i=1,\dots,n-1$, 
\begin{equation}\label{rep-G}
G_i(\rv_{_{\cT}})=\frac{q\cc_{i+1}-\cc_{i+1}}{\cc_{i+1}-\cc_i}\,\rv_{_{\cT}}+\frac{q\cc_{i+1}-\cc_i}{\cc_{i+1}-\cc_i}\,\rv_{_{\cT^{s_i}}}\ ,
\end{equation}
where $s_i$ is the transposition $(i,i+1)$.
Further, the set $\{V_{\lambda}\}_{\lambda  \in \mathcal{P}(n)}$ is a complete set of pairwise non-isomorphic irreducible representations of $\C(q)\mathcal{H}_n(q)$.
\end{thm}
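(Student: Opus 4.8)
The plan is to establish the three assertions in turn: (i) the formulas \eqref{rep-G} define an action of $\mathcal{H}_n(q)$ on $V_\lambda$; (ii) each $V_\lambda$ is absolutely irreducible; (iii) the $V_\lambda$ exhaust the irreducible $\C(q)\mathcal{H}_n(q)$-modules and are pairwise non-isomorphic. For (i) I would first record the two elementary facts that make \eqref{rep-G} meaningful: two consecutive entries $i,i+1$ of a standard tableau never lie on the same diagonal, so $\cc(\cT|i)\neq\cc(\cT|{i+1})$ and no denominator in \eqref{rep-G} vanishes; and $\cT^{s_i}$ fails to be standard precisely when $i$ and $i+1$ lie in the same row, respectively the same column, of $\cT$, in which case \eqref{rep-G} degenerates to $G_i(\rv_{\cT})=q\,\rv_{\cT}$, respectively $G_i(\rv_{\cT})=-\rv_{\cT}$. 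Granting this, the far-commutation relations in \eqref{modularh} are immediate since $G_i$ and $G_j$ then act on disjoint pairs of entries; the quadratic relation \eqref{quadrh} reduces to a $2\times 2$ (or, in the degenerate cases, $1\times 1$) check on $\mathrm{span}\{\rv_{\cT},\rv_{\cT^{s_i}}\}$; and the braid relation $G_iG_{i+1}G_i=G_{i+1}G_iG_{i+1}$ reduces to a check on the span of the at most six vectors $\rv_{\cT^{\sigma}}$, $\sigma\in\langle s_i,s_{i+1}\rangle$, involving only the three contents $\cc(\cT|i),\cc(\cT|{i+1}),\cc(\cT|{i+2})$ and broken into cases according to which of these tableaux are standard. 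This last verification is the bulk of the work, and the main obstacle in the sense of sheer length, although it is local and entirely routine.

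For (ii) I would argue by induction on $n$, taking the full statement as inductive hypothesis. The structural input is the branching rule, read off directly from the given basis: restricting $V_\lambda$ to the subalgebra $\langle G_1,\dots,G_{n-2}\rangle\cong\mathcal{H}_{n-1}(q)$ and grouping the standard tableaux of shape $\lambda$ according to the node occupied by $n$ yields $V_\lambda|_{\mathcal{H}_{n-1}(q)}\cong\bigoplus_{b}V_{\lambda\setminus b}$, the sum over the removable nodes $b$ of $\lambda$, where $V_{\lambda\setminus b}$ carries exactly the formula \eqref{rep-G} since deleting the entry $n$ changes none of the relevant contents. By the inductive hypothesis these summands are irreducible and pairwise non-isomorphic, so the restriction is multiplicity-free; hence any $\mathcal{H}_n(q)$-submodule $W$ of $V_\lambda$ restricts to a sub-sum $\bigoplus_{b\in S}V_{\lambda\setminus b}$. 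It then suffices to link the blocks: for two consecutive removable nodes $b,b'$ of $\lambda$ (which necessarily lie in distinct rows and distinct columns) one can find a standard $\cT$ with $\{n-1,n\}$ occupying $\{b,b'\}$; then $\cT^{s_{n-1}}$ is again standard and the off-diagonal coefficient in $G_{n-1}(\rv_{\cT})$ is nonzero, so $V_{\lambda\setminus b}\subseteq W$ if and only if $V_{\lambda\setminus b'}\subseteq W$. Running along the chain of removable nodes forces $W=0$ or $W=V_\lambda$. The genuinely delicate points here are the multiplicity-freeness of the restriction and the non-vanishing of the linking coefficients.

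To upgrade irreducibility over $\C(q)$ to absolute irreducibility, and to get pairwise non-isomorphy directly, I would use the quantum contents as coordinates: a standard tableau of shape $\lambda$ is determined by its content sequence $(\cc(\cT|1),\dots,\cc(\cT|n))$, and since $q$ is transcendental distinct integer exponents yield distinct elements of $\C(q)$. Consequently the images of the Jucys--Murphy elements are simultaneously diagonalised by the basis $\{\rv_{\cT}\}$ with pairwise distinct joint eigenvalues. An endomorphism of $V_\lambda$ commutes with them, hence is diagonal in this basis, and then commuting with the $G_i$'s (whose off-diagonal entries connect the whole basis, by the linking argument of (ii)) forces it to be scalar; so $\mathrm{End}_{\C(q)\mathcal{H}_n(q)}(V_\lambda)=\C(q)$, and the joint spectrum also separates $V_\lambda$ from $V_\mu$ when $\lambda\neq\mu$.

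Finally, for (iii) I would invoke semisimplicity: $\C(q)\mathcal{H}_n(q)$ is a deformation of the semisimple algebra $\C\mathfrak{S}_n$ and is itself split semisimple over $\C(q)$, so $\dim_{\C(q)}\C(q)\mathcal{H}_n(q)=n!$, while $\sum_{\lambda\in\mathcal{P}(n)}(\dim_{\C(q)}V_\lambda)^2=\sum_{\lambda\in\mathcal{P}(n)}(f^\lambda)^2=n!$ by the Robinson--Schensted correspondence, where $f^\lambda$ is the number of standard tableaux of shape $\lambda$. Combining this with (ii) and Wedderburn's theorem shows that the absolutely irreducible modules $\{V_\lambda\}_{\lambda\in\mathcal{P}(n)}$ already account for all of $\C(q)\mathcal{H}_n(q)$, hence form a complete set of pairwise non-isomorphic irreducible representations.
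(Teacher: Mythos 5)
The paper does not actually prove Theorem \ref{thm-rep-hecke}: it quotes it from Hoefsmit's thesis \cite{Hoe}. So the relevant comparison is with the classical seminormal-form argument, and your outline is essentially that standard route. Your preliminary observations are correct: consecutive entries of a standard tableau never share a diagonal, so the denominators in \eqref{rep-G} never vanish, and in the degenerate cases $\cc_{i+1}=q\cc_i$ (same row) and $\cc_{i+1}=q^{-1}\cc_i$ (same column) the formula collapses to the eigenvalues $q$ and $-1$, consistent with \eqref{quadrh}. The induction in (ii) is also sound, and the delicate points you single out do go through: the restriction is multiplicity-free because the $\lambda\setminus b$ are distinct partitions of $n-1$, and for two removable nodes the content exponents differ by at least $2$, so the off-diagonal coefficient $\frac{q\cc_n-\cc_{n-1}}{\cc_n-\cc_{n-1}}$ in the linking step is indeed nonzero.

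Two places where real work is asserted rather than done. First, the braid relations in \eqref{modularh}, which you acknowledge; this case-by-case check on the span of the orbit of $\rv_{\cT}$ under $\langle s_i,s_{i+1}\rangle$ is the computational heart of the theorem and is exactly what \cite{Hoe} carries out. Second, and not flagged by you, the claim that the Jucys--Murphy elements act diagonally on $\{\rv_{\cT}\}$ with eigenvalues the contents is not immediate from \eqref{rep-G}; it requires its own induction using these formulas. This step matters, because your completeness argument genuinely needs the splitness it provides: the count $\sum_{\lambda}(\dim V_\lambda)^2=n!$ together with mere irreducibility would not exclude nontrivial endomorphism division algebras, so either prove the JM statement or observe that your irreducibility argument in (ii) is insensitive to the ground field and hence already yields absolute irreducibility. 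Finally, invoking ``$\C(q)\mathcal{H}_n(q)$ is split semisimple'' at the end is circular in the paper's logic (that is the corollary drawn from this theorem); it is also unnecessary, since absolute irreducibility, pairwise non-isomorphy and the dimension count give surjectivity of $\C(q)\mathcal{H}_n(q)\rightarrow\bigoplus_{\lambda}\mathrm{End}_{\C(q)}(V_\lambda)$, and equality of dimensions then forces it to be an isomorphism, which yields both semisimplicity and completeness at once.
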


\begin{cor}
The algebra $\C(q)\mathcal{H}_n(q)$ is split semisimple. 
\end{cor}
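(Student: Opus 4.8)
The plan is to combine the classification in Theorem~\ref{thm-rep-hecke} with a dimension count. First I would recall from the discussion of the standard basis that $\mathcal{H}_n(q)$ is a free $\C[q,q^{-1}]$-module of rank $n!$, so that $\dim_{\C(q)}\C(q)\mathcal{H}_n(q)=n!$. On the other side, each $V_{\lambda}$ is by construction defined over $\C(q)$ and has dimension $f^{\lambda}$, the number of standard tableaux of shape $\lambda$; and the Robinson--Schensted correspondence gives the identity $\sum_{\lambda\in\mathcal{P}(n)}(f^{\lambda})^2=n!$. Hence $\sum_{\lambda\in\mathcal{P}(n)}(\dim_{\C(q)}V_{\lambda})^2=\dim_{\C(q)}\C(q)\mathcal{H}_n(q)$.

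Next I would feed this equality into Wedderburn--Artin theory. Write $A:=\C(q)\mathcal{H}_n(q)$, let $J$ be its Jacobson radical (nilpotent, since $A$ is finite dimensional over $\C(q)$), and let $A/J\cong\prod_i M_{m_i}(D_i)$ with the $D_i$ finite-dimensional division $\C(q)$-algebras. Every simple $A$-module is a simple $A/J$-module, so by Theorem~\ref{thm-rep-hecke} the modules $\{V_{\lambda}\}_{\lambda\in\mathcal{P}(n)}$ are in bijection with the Wedderburn components; writing $M_{m_{\lambda}}(D_{\lambda})$ for the component corresponding to $V_{\lambda}$, we have $\dim_{\C(q)}V_{\lambda}=m_{\lambda}\dim_{\C(q)}D_{\lambda}$. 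Therefore
\[
n!=\sum_{\lambda\in\mathcal{P}(n)}(\dim_{\C(q)}V_{\lambda})^2=\sum_{\lambda\in\mathcal{P}(n)}m_{\lambda}^2(\dim_{\C(q)}D_{\lambda})^2\ \ge\ \sum_{\lambda\in\mathcal{P}(n)}m_{\lambda}^2\dim_{\C(q)}D_{\lambda}=\dim_{\C(q)}A/J\ \le\ \dim_{\C(q)}A=n!\,.
\]
All inequalities are thus equalities: $\dim_{\C(q)}A/J=\dim_{\C(q)}A$ forces $J=0$, so $A$ is semisimple, and $m_{\lambda}^2(\dim_{\C(q)}D_{\lambda})^2=m_{\lambda}^2\dim_{\C(q)}D_{\lambda}$ forces $\dim_{\C(q)}D_{\lambda}=1$, i.e.\ $D_{\lambda}=\C(q)$ for every $\lambda$, so $A$ is split.

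I do not expect a real obstacle. The only subtlety worth flagging is that possessing a complete list of irreducible representations does not by itself imply semisimplicity (witness $\C(q)[x]/(x^2)$), which is precisely why the exact numerical match $\sum_{\lambda}(\dim_{\C(q)}V_{\lambda})^2=n!$ is doing the work: it simultaneously kills the radical and trivialises each residual division algebra. An essentially equivalent alternative would be to appeal to Tits' deformation theorem, specialising $q\mapsto 1$ to the split semisimple group algebra $\C\mathfrak{S}_n$, but the direct count above is shorter and fully self-contained given Theorem~\ref{thm-rep-hecke}.
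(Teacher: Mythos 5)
There is a genuine gap, and it sits exactly where you try to make the dimension count ``do the work''. Your displayed chain reads
\[
n!=\sum_{\lambda}m_{\lambda}^2(\dim D_{\lambda})^2\ \ge\ \sum_{\lambda}m_{\lambda}^2\dim D_{\lambda}=\dim_{\C(q)}A/J\ \le\ \dim_{\C(q)}A=n!,
\]
which is of the form $n!\ge x$ and $x\le n!$: the two inequalities point in the same direction, so this is not a sandwich and you cannot conclude that ``all inequalities are equalities''. Indeed, the hypotheses you actually invoke --- a complete set of pairwise non-isomorphic irreducibles defined over the ground field together with $\sum_{\lambda}(\dim V_{\lambda})^2=\dim A$ --- do \emph{not} imply split semisimplicity: the $\R$-algebra $\C[x]/(x^2)$ has a unique irreducible module, namely $\C$, of $\R$-dimension $2$, so the sum of squares is $4=\dim_{\R}\C[x]/(x^2)$, yet this algebra is neither semisimple nor split (here $m=1$, $D=\C$, $\dim J=2$, and the ``excess'' $(\dim D)^2-\dim D$ exactly absorbs the radical). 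Your own numerology allows the same phenomenon: the constraint you derive is $\dim J=\sum_{\lambda}m_{\lambda}^2\dim D_{\lambda}(\dim D_{\lambda}-1)$, which is compatible with $J\neq 0$ whenever some $D_{\lambda}$ is a nontrivial division algebra.

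What is missing is the \emph{absolute} irreducibility of the modules $V_{\lambda}$, i.e.\ $\mathrm{End}_{A}(V_{\lambda})=\C(q)$, so that $D_{\lambda}=\C(q)$. Granting that, the argument closes correctly: by Jacobson density and the pairwise non-isomorphy, the representations give a surjection $A\twoheadrightarrow\prod_{\lambda}M_{f^{\lambda}}(\C(q))$, and since both sides have $\C(q)$-dimension $n!=\sum_{\lambda}(f^{\lambda})^2$ this surjection is an isomorphism, proving the corollary. Absolute irreducibility is available here because formula (\ref{rep-G}) defines the same module over any field extension of $\C(q)$ and the irreducibility argument of Theorem \ref{thm-rep-hecke} is insensitive to such an extension (since $\C(q)$ is not algebraically closed, irreducibility over $\C(q)$ alone is not enough, as the example above shows). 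Alternatively, the route you mention only in passing --- Tits' deformation theorem, specialising $q\mapsto1$ to the split semisimple algebra $\C[\mathfrak{S}_n]$ --- is a complete proof on its own and is the standard one; as written, though, your ``self-contained'' count does not suffice.
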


\subsection{Irreducible representations of $\C(q){\rm TL}_n(q)$}
Since the algebra $\C(q)\mathcal{H}_n(q)$ is semisimple, the algebra $\C(q){\rm TL}_{n}(q):=\C(q) \otimes_{\C[q,q^{-1}]}{\rm TL}_{n}(q)$ is also semisimple. Moreover, we  have that the irreducible representations of $\C(q){\rm TL}_{n}(q)$ are 
precisely the irreducible representations of $\C(q)\mathcal{H}_n(q)$ that pass to the quotient. That is, 
$V_{\lambda}$ is an irreducible representation of $\C(q){\rm TL}_{n}(q)$ if and only if
$G_{1,2} (\rv_{_{\cT}}) =0$ for every standard tableau ${\cT}$ of shape $\lambda$. 
It is easy to see that the latter is equivalent to the trivial representation not being a direct summand of the restriction $\mathrm{Res}_{\langle s_1,s_2\rangle}^{\mathfrak{S}_n}(E^{\lambda})$, where $E^{\lambda}$ is the irreducible representation of the symmetric group $\mathfrak{S}_n$ (equivalently, the algebra $\C\mathcal{H}_n(1)$) labelled by $\lambda$. We obtain the following description of the irreducible representations of $\C(q){\rm TL}_n(q)$:

\begin{prop}\label{classical case}
We have that $V_{\lambda}$ is an irreducible representation of $\C(q){\rm TL}_n(q)$ if and only if the Young diagram of $\lambda$ has at most two columns.
\end{prop}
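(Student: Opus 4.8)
The strategy is to translate, via classical symmetric-group combinatorics, the criterion already isolated just above: $V_\lambda$ is an irreducible representation of $\C(q){\rm TL}_n(q)$ if and only if the trivial representation does not occur as a direct summand of $\mathrm{Res}^{\mathfrak{S}_n}_{\langle s_1,s_2\rangle}(E^\lambda)$. (For $n\leq 2$ we have $I_n=\{0\}$ and every partition of $n$ has at most two columns, so there is nothing to prove; I may therefore assume $n\geq 3$, so that $\langle s_1,s_2\rangle$ is the Young subgroup $\mathfrak{S}_{(3,1,\dots,1)}\cong\mathfrak{S}_3$ of $\mathfrak{S}_n$.) Thus the whole content of the statement reduces to the combinatorial equivalence: the multiplicity $m_\lambda:=\big[\,\mathrm{Res}^{\mathfrak{S}_n}_{\langle s_1,s_2\rangle}(E^\lambda):\mathrm{triv}\,\big]$ vanishes if and only if $\lambda_1\leq 2$.

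First I would compute $m_\lambda$. By Frobenius reciprocity, $m_\lambda$ equals the multiplicity of $E^\lambda$ in $\mathrm{Ind}^{\mathfrak{S}_n}_{\mathfrak{S}_{(3,1,\dots,1)}}(\mathrm{triv})$, i.e.\ in the Young permutation module $M^{(3,1,\dots,1)}$. By Young's rule this multiplicity is the Kostka number $K_{\lambda,(3,1,\dots,1)}$, the number of semistandard tableaux of shape $\lambda$ and content $(3,1,\dots,1)$. (Equivalently, one may restrict $E^\lambda$ first to $\mathfrak{S}_3\times\mathfrak{S}_{n-3}$ using the Littlewood--Richardson rule and then forget the second factor, which gives $m_\lambda=\sum_{\nu\vdash n-3}c^\lambda_{(3),\nu}\dim E^\nu$; by Pieri's rule this is positive precisely when $\lambda$ is obtained from some $\nu\subseteq\lambda$ by adding a horizontal strip of $3$ boxes.)

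It then remains to see that $K_{\lambda,(3,1,\dots,1)}=0$ if and only if $\lambda_1\leq 2$. Since the entries in each column of a semistandard tableau strictly increase, the three entries equal to $1$ must all lie in the first row, so such a tableau can exist only if $\lambda_1\geq 3$. Conversely, if $\lambda_1\geq 3$ then $(3)\subseteq\lambda$, and one builds a semistandard tableau of content $(3,1,\dots,1)$ by placing $1$ in the first three boxes of row $1$ and filling the remaining $n-3$ boxes of the skew shape $\lambda/(3)$ with $2,3,\dots,n-2$ along any standard filling of that skew shape (which always exists). Hence $m_\lambda=0$ exactly when $\lambda_1\leq 2$, which combined with the first paragraph proves the proposition. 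The argument is essentially bookkeeping once the classical input (Young's rule, or Pieri) is invoked; the only mild subtlety is the elementary tableau combinatorics of this last paragraph, and I do not anticipate a genuine obstacle.
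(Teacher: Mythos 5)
Your proposal is correct. The paper actually gives no proof of this proposition: it records the reduction (just before the statement) to the criterion that the trivial representation must not occur in $\mathrm{Res}^{\mathfrak{S}_n}_{\langle s_1,s_2\rangle}(E^{\lambda})$, and then quotes the result as classical (going back to Jones). Your argument takes that same criterion as the starting point and supplies the omitted combinatorial verification in the standard way: Frobenius reciprocity identifies the multiplicity with the Kostka number $K_{\lambda,(3,1,\dots,1)}$, and the observation that all entries equal to $1$ in a semistandard tableau lie in the first row shows this vanishes exactly when $\lambda_1\leq 2$, i.e.\ when $\lambda$ has at most two columns. The only point you inherit from the paper without proof is the equivalence between $G_{1,2}$ acting by zero on $V_\lambda$ and the vanishing of that multiplicity (which rests on semisimplicity, so that multiplicities for the parabolic subalgebra generated by $G_1,G_2$ agree with their $q=1$ specialisation); since the paper itself asserts this as ``easy to see,'' relying on it is legitimate.
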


\subsection{The dimension of $\C(q){\rm TL}_{n}(q)$}

For $n \in \N$, we denote by $C_n$ the $n$-th Catalan number, that is, the number
$$C_n = \frac{1}{n+1} \binom{2n}{n} =\frac{1}{n+1} \sum_{k=0}^n \binom{n}{k}^2 .$$
We have the following standard result on the dimension of $\C(q){\rm TL}_{n}(q)$:

\begin{prop}\label{dim-TL} 
We have
$${\rm dim}_{\C(q)}(\C(q){\rm TL}_{n}(q)) = C_n.$$
\end{prop}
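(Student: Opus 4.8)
The strategy is to combine the semisimplicity of $\C(q){\rm TL}_n(q)$ with the explicit classification of its irreducible representations from Proposition \ref{classical case}. Since $\C(q){\rm TL}_n(q)$ is split semisimple, the Artin--Wedderburn theorem gives
\[
\dim_{\C(q)} \C(q){\rm TL}_n(q) = \sum_{\lambda} \bigl(\dim_{\C(q)} V_\lambda\bigr)^2,
\]
where the sum runs over all $\lambda \in \mathcal{P}(n)$ whose Young diagram has at most two columns. By Theorem \ref{thm-rep-hecke}, $\dim_{\C(q)} V_\lambda$ equals the number of standard tableaux of shape $\lambda$, i.e. $f^\lambda$, the dimension of the irreducible $\mathfrak{S}_n$-module $E^\lambda$. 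So the claim reduces to the purely combinatorial identity
\[
\sum_{\substack{\lambda \vdash n \\ \lambda \text{ has } \leq 2 \text{ columns}}} (f^\lambda)^2 = C_n.
\]

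Next I would set up the combinatorial bijection underlying this identity. A partition of $n$ with at most two columns is determined by the lengths of its two columns, say $a \geq b \geq 0$ with $a + b = n$; equivalently its conjugate $\lambda'$ is a partition of $n$ into at most two parts. Thus such $\lambda$ correspond to pairs $(a,b)$ with $a+b = n$, $a \geq b \geq 0$, i.e. to $b \in \{0,1,\dots,\lfloor n/2\rfloor\}$. The conjugate of a two-row partition $(a,b)$ is the two-column partition, and conjugation preserves the number of standard tableaux, so $f^\lambda = f^{(a,b)}$. The hook length formula (or the ballot-number formula) gives
\[
f^{(a,b)} = \binom{a+b}{b} - \binom{a+b}{b-1} = \binom{n}{b} - \binom{n}{b-1},
\]
for $a = n-b \geq b$. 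It then remains to verify
\[
\sum_{b=0}^{\lfloor n/2\rfloor} \left( \binom{n}{b} - \binom{n}{b-1} \right)^{2} = C_n.
\]
This is a classical identity: the left-hand side telescopes against the symmetry $\binom{n}{b} = \binom{n}{n-b}$ to collapse into $\sum_{b} \binom{n}{b}^2 - \sum_b \binom{n}{b}\binom{n}{b-1}$ type sums, and using $\sum_{b=0}^n \binom{n}{b}^2 = \binom{2n}{n}$ together with $\sum_{b} \binom{n}{b}\binom{n}{b+1} = \binom{2n}{n-1}$ yields $\binom{2n}{n} - \binom{2n}{n-1} = C_n$, matching the second displayed formula for $C_n$ in the statement. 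Alternatively, one can invoke the well-known fact that standard Young tableaux with at most two columns (equivalently, at most two rows) are counted by Catalan numbers and that the sum of squares of their counts over shapes is again $C_n$ via the RSK correspondence restricted to involutions avoiding long increasing subsequences — but the direct binomial computation is cleaner.

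The only genuine content is the combinatorial identity, and even that is standard; the main thing to be careful about is the edge cases $n \leq 2$, where ${\rm TL}_n(q) = \mathcal{H}_n(q)$ has dimension $n! = C_n$, consistent with all partitions of $n$ having at most two columns when $n \leq 2$. I expect no real obstacle: the representation-theoretic input is already fully supplied by Theorem \ref{thm-rep-hecke} and Proposition \ref{classical case}, and the rest is bookkeeping with binomial coefficients.
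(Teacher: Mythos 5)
Your proof is correct, and it is worth noting that the paper itself gives no argument for this proposition: it is quoted as a standard fact, so there is no ``paper proof'' to match. Your route --- split semisimplicity of $\C(q){\rm TL}_n(q)$, Proposition \ref{classical case} to restrict to partitions with at most two columns, $\dim_{\C(q)} V_\lambda=f^\lambda$ from Theorem \ref{thm-rep-hecke}, the ballot formula $f^{(n-b,b)}=\binom{n}{b}-\binom{n}{b-1}$ for the conjugate two-row shape, and the identity $\sum_{b=0}^{\lfloor n/2\rfloor}\bigl(\binom{n}{b}-\binom{n}{b-1}\bigr)^2=\binom{2n}{n}-\binom{2n}{n-1}=\frac{1}{n+1}\binom{2n}{n}$ --- is exactly the mechanism the authors use implicitly later: in the proof of Theorem \ref{res2} they invoke, for each part $\mu_i$, the equality $\sum_{\lambda}\dim_{\C(q)}(V_{\lambda})^2={\rm dim}_{\C(q)}(\C(q){\rm TL}_{\mu_i}(q))=C_{\mu_i}$ over two-column shapes, i.e.\ this proposition read through the Wedderburn decomposition. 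The other standard proof, which the paper in effect points to in \S\ref{subs-basis}, is Jones's explicit basis $\mathcal{B}_{{\rm TL}_n(q)}$ indexed by $\mathfrak{T}_n$ with $|\mathcal{B}_{{\rm TL}_n(q)}|=C_n$; that argument has the advantage of working over $\C[q,q^{-1}]$ without any semisimplicity, whereas yours stays entirely inside the representation-theoretic framework of this section. The only place to tighten your write-up is the binomial computation: the ``telescoping'' is really an expansion of the square, and the boundary term at $b=\lfloor n/2\rfloor$ must be treated separately when $n$ is odd (the correction $\binom{n}{\lfloor n/2\rfloor}^2$ then appears in both the sum of squares and the cross sum, so it cancels); with that bookkeeping done, $\sum_b\binom{n}{b}^2=\binom{2n}{n}$ and $\sum_b\binom{n}{b}\binom{n}{b+1}=\binom{2n}{n-1}$ finish the proof as you indicate.
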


\section{Representation theory of the framisation of the Temperley--Lieb algebra}

In this section, we  look at a generalisation of the Temperley--Lieb algebra, which is obtained as a quotient of the Yokonuma--Hecke algebra of type $A$. This algebra was introduced in \cite{gjkl2}, where some of its topological properties were studied. Here we  determine its irreducible representations and calculate its dimension.

\subsection{The Yokonuma--Hecke algebra ${\rm Y}_{d,n}(q)$}
Let  $d,\,n \in \N$. Let $q $ be an indeterminate.  The \emph{Yokonuma--Hecke algebra of type} $A$, denoted by ${\rm Y}_{d,n}(q)$, is a $\C[q,q^{-1}]$-associative algebra generated by the elements
$$
 g_1, \ldots, g_{n-1}, t_1, \ldots, t_n
$$
subject to the following relations:
\begin{equation}\label{modular}
\begin{array}{crcll}
\mathrm{(b}_1) & g_ig_j & = & g_jg_i & \mbox{for all  $i,j=1,\ldots,n-1$ with $\vert i-j\vert > 1$,}\\
\mathrm{(b}_2) & g_ig_{i+1}g_i & = & g_{i+1}g_ig_{i+1} & \mbox{for all    $i=1,\ldots,n-2$,}\\
\mathrm{(f}_1) & t_i t_j & =  &  t_j t_i &  \mbox{for all  $i,j=1,\ldots,n$,}\\
\mathrm{(f}_2) & t_j g_i & = & g_i t_{s_i(j)} & \mbox{for  all  $i=1,\ldots,n-1$ and $j=1,\ldots,n$,}\\
\mathrm{(f}_3) & t_j^d   & =  &  1 & \mbox{for  all  $j=1,\ldots,n$,}
\end{array}
\end{equation}
where $s_i$ denotes the transposition $(i, i+1)$, together with the quadratic
relations:
\begin{equation}\label{quadr}
g_i^2 =q + (q-1) \, e_{i} \, g_i\qquad \mbox{for  all $i=1,\ldots,n-1$,}
\end{equation}
 where
\begin{equation}\label{ei}
e_i :=\frac{1}{d}\sum_{s=0}^{d-1}t_i^s t_{i+1}^{-s}.
\end{equation}

Note that we have $e_i^2=e_i$ and $e_ig_i=g_ie_i$ for  all $i=1,\ldots,n-1$.
Moreover, we have 
\begin{equation}\label{eiti}
t_{i}e_i=t_{i+1}e_i\  \,\,\,\,\,\,\mbox{ for all $ i =1,\dots,n-1$.}
\end{equation}

\begin{rem}\label{ref rem}{\rm  
In \cite{ChPo}, the first author and Poulain d'Andecy consider the braid generators 
$\wt{g}_i:=q^{-1/2} g_i$  which satisfy the quadratic relation
\begin{equation}
\wt{g}_i^2 = 1 + (q^{1/2} - q^{-1/2}) e_i \wt{g}_i\ .
\end{equation}
On the other hand, in all the papers  \cite{ju3, jula2, jula3, jula4, chla, gjkl1, gjkl2} prior to \cite{ChPo}, the authors consider the braid generators  
$\wo{g}_i: = \wt{g}_i+(q^{1/2}-1)\, e_{i} \wt{g}_i$ (and thus, $\wt{g}_i:= \wo{g}_i+(q^{-1/2}-1)\, e_{i}  \wo{g}_i$)
which  satisfy the quadratic relation
\begin{equation}
\wo{g}_i^2 =1 +  (q-1) \,e_i + (q-1) \, e_{i} \, \wo{g}_i\ .
\end{equation}
We have $\wo{g}_i = q^{-1/2} g_i + (1-q^{-1/2}) e_i g_i$. Note that
\begin{equation}\label{FTL is ok}
e_ig_i = e_i \wo{g}_i = q^{1/2} e_i \wt{g}_i  \,\,\,\,\,\,\mbox{ for all $ i =1,\dots,n-1$.}
\end{equation}
 }
\end{rem}

\begin{rem}{\rm
If we specialise $q$ to $1$, the defining relations (\ref{modular})--(\ref{quadr}) become the defining relations for the complex reflection group $G(d,1,n) \cong (\Z/d\Z) \wr \mathfrak{S}_n$. Thus, the algebra ${\rm Y}_{d,n}(q)$ is a deformation of $\C[G(d,1,n)]$. Moreover, for $d=1$, the Yokonuma--Hecke algebra ${\rm Y}_{1,n}(q)$ coincides with the Iwahori--Hecke algebra $\mathcal{H}_n(q)$ of type $A$.}
\end{rem}

\begin{rem}{\rm
The relations $({\rm b}_1)$, $({\rm b}_2)$, $({\rm f}_1)$ and $({\rm f}_2)$ are defining relations for the classical framed braid group $\mathcal{F}_n \cong \Z\wr B_n$, where $B_n$ is the classical braid group on $n$ strands, with the $t_j$'s being interpreted as the ``elementary framings" (framing 1 on the $j$th strand). The relations $t_j^d = 1$ mean that the framing of each braid strand is regarded modulo~$d$. Thus, the algebra ${\rm Y}_{d,n}(q)$ arises naturally  as a quotient of the framed braid group algebra over the modular relations ~$\mathrm{(f}_3)$ and the quadratic relations~(\ref{quadr}). Moreover, relations (\ref{modular}) are defining relations for the  modular framed braid group $\mathcal{F}_{d,n}\cong (\Z/d\Z) \wr B_n$, so the algebra ${\rm Y}_{d,n}(q)$ can be also seen  as a quotient of the modular framed braid group algebra over the  quadratic relations~(\ref{quadr}). 
}\end{rem}

Let $w \in \mathfrak{S}_n$ and let $w=s_{i_1}s_{i_2}\ldots s_{i_r}$ be a reduced expression for $w$. By Matsumoto's lemma, the element $g_w:=g_{i_1}g_{i_2}\ldots g_{i_r}$ is well defined. 
Juyumaya \cite{ju3}  has shown that the set 
$$\{t_1^{a_1}t_2^{a_2}\ldots t_n^{a_n} g_w \,|\, 0\leq a_1,a_2,\ldots,a_n \leq d-1,\,w \in \mathfrak{S}_n\}$$ 
forms a basis of  $\YH$ over $\C[q,q^{-1}]$, which is called the \emph{standard basis}. In particular, $\YH$
is a free $\C[q,q^{-1}]$-module of rank $d^nn!$.

\subsection{The Framisation of the Temperley--Lieb algebra ${\rm FTL}_{d,n}(q)$} 
Let $i=1,\ldots,n-2$. We set
$$g_{i,i+1}:=1 + g_i + g_{i+1} + g_ig_{i+1} +g_{i+1}g_i + g_ig_{i+1}g_i = \sum_{w \in {\langle s_i,s_{i+1}\rangle}}g_w.$$
We define the \emph{Framisation of the Temperley--Lieb algebra} to be the quotient 
${\rm Y}_{d,n}(q)/I_{d,n}$, where $I_{d,n}$ is the ideal generated by the element
$e_1e_2\, g_{1,2}$  (if $n \leq 2$, we take $I_{d,n} = \{0\}$). 
Note that, due to (\ref{eiti}), the product $e_1e_2$ commutes with $g_1$ and with $g_2$, so it commutes with $g_{1,2}$. Further, we have
$ e_ie_{i+1}g_{i,i+1} \in I_{d,n}$  for all $i=1,\ldots,n-2$, since
$$e_ie_{i+1}g_{i,i+1} = (g_1g_2 \ldots g_{n-1})^{i-1} \,e_1e_2 \,g_{1,2}\, (g_1g_2 \ldots g_{n-1})^{-(i-1)}.$$

\begin{rem}{\rm In \cite{gjkl2}, the Framisation of the Temperley--Lieb algebra is defined to be the quotient 
${\rm Y}_{d,n}(q)/J_{d,n}$, where $J_{d,n}$ is the ideal generated by the element
$e_1e_2\,\wo{g}_{1,2}$, where
$$\wo{g}_{1,2} = 1 + \wo{g}_1+ \wo{g}_{2} + \wo{g}_1\wo{g}_{2} +\wo{g}_{2}\wo{g}_1 + \wo{g}_1\wo{g}_{2}\wo{g}_1.$$
Due to (\ref{FTL is ok}) and the fact that the $e_i$'s are idempotents, we have $e_1e_2\,\wo{g}_{1,2} = e_1e_2\,g_{1,2}$, and so $I_{d,n}=J_{d,n}$.
}
\end{rem}

\begin{rem}{\rm The ideal $I_{d,n}$ is the ideal generated by the element
$ \sum_{0\leq a,b \leq d-1} t_1^a t_2^b t_3^{-a-b} \,g_{1,2}$.}
\end{rem}

\begin{rem}{\rm
For $d=1$, the Framisation of the Temperley--Lieb algebra ${\rm FTL}_{1,n}(q)$ coincides with the classical Temperley--Lieb algebra $\mathrm{TL}_n(q)$.}
\end{rem}

\subsection{Combinatorics of $d$-partitions}

A $d$-partition $\blambda$ of size $n$ is a $d$-tuple of partitions such that the total number of nodes in the associated Young diagrams is equal to $n$. That is, we have $\blambda=(\blambda^{(1)},\dots,\blambda^{(d)})$ with $\blambda^{(1)},\dots,\blambda^{(d)}$ usual partitions such that $|\blambda^{(1)}|+\dots+|\blambda^{(d)}|=n$.

We write $\btheta=(x,y,k)$ for the node in row $x$ and column $y$ of the Young diagram of $\blambda^{(k)}$, and we say that
$\btheta$ is a \emph{$d$-node} of $\blambda$.  For a $d$-node $\btheta=(x,y,k)$, we define $\pos(\btheta):=k$ and $\cc(\btheta):=q^{y-x}$. The number $\pos(\btheta)$ is the position of $\btheta$ and the number $\cc(\btheta)$ is called the \emph{(quantum) content} of $\btheta$.

Let $\blambda=(\blambda^{(1)},\ldots,\blambda^{(d)})$ be a $d$-partition of $n$. A {\em $d$-tableau of shape $\blambda$} is a bijection between the set $\{1,\dots,n\}$ and the set of $d$-nodes in $\blambda$. In other words, a $d$-tableau of shape $\blambda$ is obtained by placing the numbers $1,\dots,n$ in the $d$-nodes of $\blambda$. 
The \emph{size} of a $d$-tableau of shape $\blambda$ is $n$, that is, the size of $\blambda$.  
 A $d$-tableau is {\em standard} if its entries  increase along each row and down 
 each column of every diagram in $\blambda$. For $d=1$, a standard $1$-tableau is a usual standard tableau.

For a $d$-tableau ${\mathcal{T}}$, we denote respectively by $\pos({\mathcal{T}}|i)$ and $\cc({\mathcal{T}}|i)$  the position and the quantum content  of the $d$-node with the number $i$ in it. For example, for the standard $3$-tableau ${\mathcal{T}}^{^{\phantom{A}}}\!\!\!\!={\textrm{$\left(
\,\fbox{\scriptsize{$2$}}\fbox{\scriptsize{$3$}}\, ,\,\varnothing\, ,\,\fbox{\scriptsize{$1$}}\,\right)$}}$ of size $3$, we have
\[\pos({\mathcal{T}}|1)=3\,,\ \ \pos({\mathcal{T}}|2)=1\,,\ \ \pos({\mathcal{T}}|3)=1\ \ \ \ \ \text{and}\ \ \ \ \  \cc({\mathcal{T}}|1)=1\,,\ \ \cc({\mathcal{T}}|2)=1\,,\ \ \cc({\mathcal{T}}|3)=q\,.\]

For any $d$-tableau $\cT$ of size $n$ and any permutation $\sigma \in \mathfrak{S}_n$, we denote by $\cT^{\sigma}$ the $d$-tableau obtained from $\cT$ by applying the permutation $\sigma$ on the numbers contained in the $d$-nodes of $\cT$. We have 
$$
\pos(\cT^{\sigma}|i)=\pos\bigl(\cT|\sigma^{-1}(i)\bigr)\ \ \ \text{and}\ \ \ \cc(\cT^{\sigma}|i)=\cc\bigl(\cT|\sigma^{-1}(i)\bigr)\ \ \ \ \ \ \text{for all $i=1,\dots,n$.}
$$
Note that if the $d$-tableau $\cT$ is standard, the $d$-tableau $\cT^{\sigma}$ is not necessarily standard. 

\subsection{Formulas for the irreducible representations of $\C(q){\rm Y}_{d,n}(q)$}\label{formYH}
The representation theory of ${\rm Y}_{d,n}(q)$  has been first studied by Thiem \cite{Thi1,Thi2,Thi3} and subsequently by the first author and Poulain d'Andecy \cite{ChPo}, who gave a description of its irreducible representations in terms of $d$-partitions and $d$-tableaux.

Let  $\mathcal{P}(d,n)$ be the set of all $d$-partitions of $n$, and let
 $\blambda \in \mathcal{P}(d,n)$. Let $\wt{V}_{\blambda}$ be a  $\C(q^{1/2})$-vector space with a basis $\{\wt{\bv}_{_{\cT}}\}$ indexed by the standard $d$-tableaux of shape $\blambda$.  In \cite[Proposition 5]{ChPo}, the first author and Poulain d'Andecy describe actions of the generators $\wt{g}_i$, for $i=1,\ldots,n-1$, and $t_j$, for $j=1,\ldots,n$, on $\{\wt{\bv}_{_{\cT}}\}$, which make  $\wt{V}_{\blambda}$ into a representation of  ${\rm Y}_{d,n}(q)$ over $\C(q^{1/2})$.
  The matrices describing the action of the generators $t_j$ have complex coefficients, while the ones describing the action of the generators $\wt{g}_i$ have coefficients in $\C(q^{1/2})$. However, the change of basis 
\begin{equation}\label{change of basis}
 \bv_{_{\cT}} := q^{N_{\cT}/2}\, \wt{\bv}_{_{\cT}}, 
 \end{equation}
where $N_{\cT}: = \# \{ i \in \{1,\ldots,n-1\} \,|\, \pos(\cT|i) < \pos(\cT|i+1)\}$, and the change of generators
\begin{equation}\label{change of generators}
 g_i=q^{1/2}\,\wt{g}_i
\end{equation}
yield a description of the action of ${\rm Y}_{d,n}(q)$ on $\wt{V}_{\blambda}$ which is realised over $\C(q)$ (see theorem below).

 Let $V_{\blambda}$ be a  $\C(q)$-vector space with a basis $\{\bv_{_{\cT}}\}$ indexed by the standard $d$-tableaux of shape $\blambda$. We set $\bv_{_{\cT}}:=0$ for any non-standard $d$-tableau $\cT$ of shape $\blambda$. 
Let $\{\xi_1,\dots,\xi_d\}$ be the set of all $d$-th roots of unity (ordered arbitrarily).
We set $\C(q){\rm Y}_{d,n}(q):=\C(q) \otimes_{\C[q,q^{-1}]}{\rm Y}_{d,n}(q)$. The following result is \cite[Proposition 5]{ChPo} and \cite[Theorem 1]{ChPo}, with the change of basis and generators described by (\ref{change of basis}) and (\ref{change of generators}).

\begin{thm}\label{thm-rep} 
Let $\cT$ be a standard $d$-tableau of shape $\blambda  \in \mathcal{P}(d,n)$. For brevity, we set $\pos_i:=\pos(\cT|i)$ and $\cc_i:=\cc(\cT|i)$ for $i=1,\dots,n$. 
The vector space $V_{\blambda}$ is an irreducible representation of $\C(q){\rm Y}_{d,n}(q)$ with the action of the generators on the basis element $\bv_{_{\cT}}$ defined as follows:
for $j=1,\dots,n$,
\begin{equation}\label{rep-t}
t_j(\bv_{_{\cT}})=\xi_{\pos_j}\bv_{_{\cT}}\  ;
\end{equation}
for $i=1,\dots,n-1$, if $\pos_{i}>\pos_{i+1}$ then
\begin{equation}\label{rep-g1}
g_i(\bv_{_{\cT}})=\bv_{_{\cT^{s_i}}}\ ,
\end{equation}
if $\pos_{i} < \pos_{i+1}$ then
\begin{equation}\label{rep-g2}
g_i(\bv_{_{\cT}})=q\,\bv_{_{\cT^{s_i}}}\ ,
\end{equation}
and if $\pos_{i}=\pos_{i+1}$ then
\begin{equation}\label{rep-g3}
g_i(\bv_{_{\cT}})=\frac{q\cc_{i+1}-\cc_{i+1}}{\cc_{i+1}-\cc_i}\,\bv_{_{\cT}}+\frac{q\cc_{i+1}-\cc_i}{\cc_{i+1}-\cc_i}\,\bv_{_{\cT^{s_i}}}\ ,
\end{equation}
where $s_i$ is the transposition $(i,i+1)$.
Further, the set $\{V_{\blambda}\}_{\blambda  \in \mathcal{P}(d,n)}$ is a complete set of pairwise non-isomorphic irreducible representations of $\C(q){\rm Y}_{d,n}(q)$.
\end{thm}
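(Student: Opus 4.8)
The plan is to obtain the theorem from \cite[Proposition 5]{ChPo} and \cite[Theorem 1]{ChPo} by transporting the representation $\wt V_\blambda$ along the change of basis (\ref{change of basis}) and the change of generators (\ref{change of generators}), and then to transfer irreducibility, pairwise non-isomorphy and completeness along the field extension $\C(q^{1/2})/\C(q)$, which is of degree $2$ and in particular faithfully flat.

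First I would recall that, by \cite[Proposition 5]{ChPo}, the formulas given there make $\wt V_\blambda$ into a module over $\C(q^{1/2})\YH$, with the matrices of the $t_j$ having entries in $\C$ and those of the $\wt g_i$ having entries in $\C(q^{1/2})$. The vectors $\bv_\cT := q^{N_\cT/2}\wt\bv_\cT$ form another basis of the underlying space, and the elements $g_i := q^{1/2}\wt g_i$ together with the $t_j$ form another generating set of the algebra. Rewriting the action on the new basis is then a direct computation. The $t_j$ act diagonally and unchanged, which gives (\ref{rep-t}). For $g_i$ one distinguishes the cases $\pos_i = \pos_{i+1}$ and $\pos_i \ne \pos_{i+1}$, using that $e_i$ acts as the identity on $\bv_\cT$ in the first case and as $0$ in the second (since $t_i$ and $t_{i+1}$ act by the scalars $\xi_{\pos_i}$ and $\xi_{\pos_{i+1}}$, so $e_i$ acts by $\frac{1}{d}\sum_{s}(\xi_{\pos_i}\xi_{\pos_{i+1}}^{-1})^s$). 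In the case $\pos_i = \pos_{i+1}$, $g_i$ satisfies the Iwahori--Hecke quadratic relation on the two-dimensional subspace $\langle\bv_\cT,\bv_{\cT^{s_i}}\rangle$, and one recovers (\ref{rep-g3}) exactly as in Theorem \ref{thm-rep-hecke}; in the case $\pos_i \ne \pos_{i+1}$ the half-integer powers of $q$ coming from \cite{ChPo}, from $g_i = q^{1/2}\wt g_i$, and from the normalising factors $q^{N_\cT/2}$ and $q^{N_{\cT^{s_i}}/2}$ combine so that the off-diagonal coefficient is $1$ when $\pos_i > \pos_{i+1}$ and $q$ when $\pos_i < \pos_{i+1}$, which gives (\ref{rep-g1}) and (\ref{rep-g2}). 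Since the resulting matrices are $\C(q^{1/2})$-conjugate to those of \cite{ChPo}, they satisfy the relations (\ref{modular})--(\ref{quadr}); as their entries now lie in $\C(q)$, the formulas (\ref{rep-t})--(\ref{rep-g3}) define a $\C(q)\YH$-module structure on $V_\blambda$, and by construction $V_\blambda \otimes_{\C(q)} \C(q^{1/2}) \cong \wt V_\blambda$ as $\C(q^{1/2})\YH$-modules.

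For the remaining assertions I would argue by base change. If $W \subsetneq V_\blambda$ were a nonzero proper $\C(q)\YH$-submodule, then $W \otimes_{\C(q)} \C(q^{1/2})$ would be a nonzero proper submodule of $\wt V_\blambda$ (by a dimension count), contradicting \cite[Theorem 1]{ChPo}; hence $V_\blambda$ is irreducible. If $V_\blambda \cong V_\bmu$ over $\C(q)$, then $\wt V_\blambda \cong \wt V_\bmu$ over $\C(q^{1/2})$, so $\blambda = \bmu$ by \cite[Theorem 1]{ChPo}. For completeness, let $M$ be any irreducible $\C(q)\YH$-module; then $M \otimes_{\C(q)} \C(q^{1/2})$ is nonzero, hence contains some $\wt V_\blambda = V_\blambda \otimes_{\C(q)}\C(q^{1/2})$ as a submodule, and since $\mathrm{Hom}$ commutes with the flat base change $\C(q^{1/2})/\C(q)$ one gets $\mathrm{Hom}_{\C(q)\YH}(V_\blambda, M) \ne 0$; Schur's lemma then yields $M \cong V_\blambda$. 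Thus $\{V_\blambda\}_{\blambda \in \mathcal{P}(d,n)}$ is a complete set of pairwise non-isomorphic irreducible representations of $\C(q)\YH$.

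The step I expect to be the main obstacle — really the only one with any content, the rest being formal — is the exponent bookkeeping in the case $\pos_i \ne \pos_{i+1}$: one must verify that the half-powers of $q$ always cancel to the integer powers $1$ and $q$ displayed in (\ref{rep-g1})--(\ref{rep-g2}), which comes down to keeping track of the difference $N_\cT - N_{\cT^{s_i}}$ and is exactly where the specific definition $N_\cT = \#\{\,i \in \{1,\dots,n-1\} : \pos(\cT|i) < \pos(\cT|i+1)\,\}$ is used.
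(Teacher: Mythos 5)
Your overall route is the paper's own: the paper proves Theorem \ref{thm-rep} by nothing more than invoking \cite[Proposition 5, Theorem 1]{ChPo} together with the rescalings (\ref{change of basis}) and (\ref{change of generators}), and your second half --- descending irreducibility, pairwise non-isomorphy and completeness along the extension $\C(q^{1/2})/\C(q)$, using semisimplicity and the compatibility of $\mathrm{Hom}$ with flat base change --- is a correct and standard way of making the quoted ``Theorem 1'' part precise over $\C(q)$.

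The gap is exactly at the step you single out as the only one with content, and which you assert rather than carry out. Writing $\bv_{\cT}=q^{N_{\cT}/2}\wt{\bv}_{\cT}$ and $g_i=q^{1/2}\wt{g}_i$, the coefficient of $\bv_{\cT^{s_i}}$ in $g_i(\bv_{\cT})$, when $\pos_i\neq\pos_{i+1}$, is $q^{(N_{\cT}+1-N_{\cT^{s_i}})/2}$ times the coefficient from \cite{ChPo}; so (\ref{rep-g1})--(\ref{rep-g2}) force $N_{\cT^{s_i}}-N_{\cT}=+1$ whenever $\pos_i>\pos_{i+1}$ and $-1$ whenever $\pos_i<\pos_{i+1}$. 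For the statistic $N_{\cT}=\#\{i:\pos(\cT|i)<\pos(\cT|i+1)\}$, which compares only \emph{adjacent} entries, this fails: for $d=2$, $n=3$ take the standard $2$-tableaux with position words $(1,2,1)$ and $(2,2,1)$ and $i=2$ (so $\pos_2>\pos_3$ in both); the first has $N_{\cT}=N_{\cT^{s_2}}=1$, the second has $N_{\cT}=0$, $N_{\cT^{s_2}}=1$. Since the coefficient in \cite{ChPo} depends only on the local data at $i,i+1$ (here the same in both cases), the two rescaled coefficients differ by a factor $q^{1/2}$ and hence cannot both be equal to $1$ as (\ref{rep-g1}) requires --- so the half-powers do \emph{not} cancel as you claim, whatever the precise form of the ChPo matrices. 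The bookkeeping does work if one uses the statistic counting \emph{all} pairs $j<k$ with $\pos(\cT|j)<\pos(\cT|k)$: swapping $i$ and $i+1$ then changes the count by exactly $\pm1$ with the required sign, because only the pair $(i,i+1)$ is affected. To complete your argument you must either establish the $\pm1$ property for the statistic you use (impossible as literally defined) and hence correct it to the all-pairs version, or bypass the rescaling altogether by checking directly that the displayed formulas satisfy the defining relations (\ref{modular})--(\ref{quadr}). As it stands, the crucial cancellation is both unproved and, as described, false.
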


\begin{cor}
The algebra $\C(q){\rm Y}_{d,n}(q)$ is split semisimple.
\end{cor}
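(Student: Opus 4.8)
The plan is to read the corollary off from Theorem~\ref{thm-rep} via the Artin--Wedderburn dictionary. Theorem~\ref{thm-rep} already provides a complete set $\{V_{\blambda}\}_{\blambda\in\mathcal{P}(d,n)}$ of pairwise non-isomorphic irreducible $\C(q){\rm Y}_{d,n}(q)$-modules, all realised over $\C(q)$. To upgrade ``complete set of $\C(q)$-rational irreducibles'' to ``split semisimple'' I need two supplementary facts: that each $V_{\blambda}$ is \emph{absolutely} irreducible, and that $\sum_{\blambda}(\dim_{\C(q)}V_{\blambda})^{2}$ equals $\dim_{\C(q)}\C(q){\rm Y}_{d,n}(q)=d^{n}n!$ (the rank of the standard basis of ${\rm Y}_{d,n}(q)$). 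Granting these, $\C(q){\rm Y}_{d,n}(q)/\mathrm{rad}\cong\bigoplus_{\blambda}\mathrm{Mat}_{\dim_{\C(q)}V_{\blambda}}(\C(q))$ (absolute irreducibility makes the $\blambda$-block the full matrix algebra over $\C(q)$, and the family being complete and pairwise non-isomorphic exhausts the blocks), so the dimension identity forces $\mathrm{rad}=0$ and the corollary follows.

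The crux is the absolute irreducibility of the $V_{\blambda}$. The cleanest argument uses that, by the construction preceding Theorem~\ref{thm-rep}, $V_{\blambda}$ is obtained from the $\C(q^{1/2})$-module $\wt{V}_{\blambda}$ of \cite{ChPo} by the rescalings~(\ref{change of basis}) and~(\ref{change of generators}); in particular $V_{\blambda}\otimes_{\C(q)}\C(q^{1/2})\cong\wt{V}_{\blambda}$. Since $\C(q^{1/2}){\rm Y}_{d,n}(q)$ is split semisimple over $\C(q^{1/2})$ by \cite[Theorem~1]{ChPo}, each $\wt{V}_{\blambda}$ is absolutely irreducible, and because $\overline{\C(q)}=\overline{\C(q^{1/2})}$ absolute irreducibility descends to $V_{\blambda}$ over $\C(q)$. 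Alternatively one can argue directly from the formulas~(\ref{rep-t})--(\ref{rep-g3}): since $q$ is transcendental, the commutative subalgebra generated by $t_{1},\dots,t_{n}$ together with the Jucys--Murphy elements separates the basis $\{\bv_{_{\cT}}\}$ (the $\bv_{_{\cT}}$ are its simultaneous eigenvectors, with pairwise distinct systems of eigenvalues, because distinct standard $d$-tableaux of a fixed shape have distinct position/content vectors); hence any $\C(q){\rm Y}_{d,n}(q)$-endomorphism of $V_{\blambda}$ is diagonal in that basis, and commuting in addition with the $g_{i}$ (which link $\bv_{_{\cT}}$ to $\bv_{_{\cT^{s_i}}}$, and the standard $d$-tableaux of shape $\blambda$ are connected through such moves) forces it to be scalar.

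For the dimension identity, Theorem~\ref{thm-rep} gives $\dim_{\C(q)}V_{\blambda}=\binom{n}{n_{1},\dots,n_{d}}\prod_{k=1}^{d}f^{\blambda^{(k)}}$, where $n_{k}=|\blambda^{(k)}|$ and $f^{\mu}$ is the number of standard tableaux of shape $\mu$: a standard $d$-tableau of shape $\blambda$ is the choice of which $n_{k}$ of the numbers $1,\dots,n$ enter component $k$, together with a standard filling of each component. Squaring, summing over $\blambda\in\mathcal{P}(d,n)$, and using $\sum_{\mu\vdash m}(f^{\mu})^{2}=m!$ inside each component, the total collapses to $\sum_{n_{1}+\dots+n_{d}=n}\binom{n}{n_{1},\dots,n_{d}}^{2}\prod_{k}n_{k}!=n!\sum_{n_{1}+\dots+n_{d}=n}\binom{n}{n_{1},\dots,n_{d}}=d^{n}n!$ by the multinomial theorem, which is exactly $\dim_{\C(q)}\C(q){\rm Y}_{d,n}(q)$. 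Combined with the previous paragraph this yields $\C(q){\rm Y}_{d,n}(q)\cong\bigoplus_{\blambda\in\mathcal{P}(d,n)}\mathrm{Mat}_{\dim_{\C(q)}V_{\blambda}}(\C(q))$, which is split semisimple. The only genuinely non-formal input is the absolute irreducibility of the $V_{\blambda}$; the combinatorial identity is routine and the remainder is the standard Wedderburn bookkeeping.
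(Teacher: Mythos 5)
Your proof is correct, and it is in essence the argument the paper leaves implicit: the corollary is meant to be read off from Theorem \ref{thm-rep}, whose content is imported from [ChPo, Proposition 5, Theorem 1] (where split semisimplicity over $\C(q^{1/2})$ is established), together with the observation that the change of basis \eqref{change of basis} and of generators \eqref{change of generators} realises every irreducible representation over $\C(q)$. The one place where you genuinely add something is that you do not import semisimplicity at all: you recover it from scratch via the Wedderburn count, checking absolute irreducibility of each $V_{\blambda}$ (either by descent from $\wt{V}_{\blambda}$ over $\C(q^{1/2})$, using that $\C(q^{1/2})$ is algebraic over $\C(q)$, or by the eigenvalue-separation/connectivity argument on the basis $\{\bv_{_{\cT}}\}$) and verifying $\sum_{\blambda\in\mathcal{P}(d,n)}(\dim_{\C(q)}V_{\blambda})^{2}=d^{n}n!$, which matches the rank of Juyumaya's standard basis, so the radical must vanish. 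This makes your proof more self-contained than the paper's intended one-line deduction, at the cost of relying on the completeness statement of Theorem \ref{thm-rep} (every simple module is some $V_{\blambda}$), which is exactly the nontrivial input in both routes; your combinatorial identity and the absolute-irreducibility checks are sound as written.
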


\begin{rem}{\rm Note that 
\begin{equation}\label{eirep}
e_i(\bv_{_{\cT}}) = \left\{
\begin{array}{ll}
 \bv_{_{\cT}} & \text{if }  \pos_{i} = \pos_{i+1} ; \\
 0 & \text{if }  \pos_{i} \neq \pos_{i+1}.
 \end{array}\right.
 \end{equation}}
 \end{rem}

\subsection{Irreducible representations of $\C(q){\rm FTL}_{d,n}(q)$}
Since the algebra $\C(q){\rm Y}_{d,n}(q)$ is semisimple, the algebra
$\C(q){\rm FTL}_{d,n}(q):=\C(q) \otimes_{\C[q,q^{-1}]}{\rm FTL}_{d,n}(q)$ is also semisimple. Moreover, we  have that the irreducible representations of $\C(q){\rm FTL}_{d,n}(q)$ are 
precisely the irreducible representations of $\C(q){\rm Y}_{d,n}(q)$ that pass to the quotient. That is, 
$V_{\blambda}$ is an irreducible representation of $\C(q){\rm FTL}_{d,n}(q)$ if and only if
$e_1 e_2 g_{1,2} (\bv_{_{\cT}}) =0$ for every standard $d$-tableau ${\cT}$ of shape $\blambda$. 

\begin{thm}\label{res1}
We have that $V_{\blambda}$ is an irreducible representation of $\C(q){\rm FTL}_{d,n}(q)$ if and only if the Young diagram of 
$\blambda^{(i)}$ has at most two columns for all $i=1,\ldots,d$.
\end{thm}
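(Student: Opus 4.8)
The plan is to reduce the question to the classical case (Proposition~\ref{classical case}) by analyzing how the generator $e_1e_2g_{1,2}$ acts on the basis $\{\bv_{_{\cT}}\}$ of $V_{\blambda}$, using the explicit formulas of Theorem~\ref{thm-rep} and the formula~(\ref{eirep}) for the action of the $e_i$. First I would observe that for a standard $d$-tableau $\cT$, the element $e_1e_2(\bv_{_{\cT}})$ is either $\bv_{_{\cT}}$ or $0$, and it is nonzero exactly when $\pos(\cT|1)=\pos(\cT|2)=\pos(\cT|3)$, i.e. when the nodes labelled $1,2,3$ all lie in the same component $\blambda^{(k)}$. Moreover, since $e_1e_2$ commutes with $g_1$ and $g_2$ (noted after~(\ref{eiti})), it commutes with $g_{1,2}$; hence $e_1e_2g_{1,2}(\bv_{_{\cT}})=g_{1,2}(e_1e_2\bv_{_{\cT}})$, which vanishes automatically whenever $1,2,3$ are not all in the same component. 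So the representation $V_{\blambda}$ passes to the quotient if and only if, for every standard $d$-tableau $\cT$ in which $1,2,3$ occupy the same component, one has $g_{1,2}(\bv_{_{\cT}})=0$.

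Next I would isolate the relevant subspace. Fix a component index $k$; let $W_k \subseteq V_{\blambda}$ be the span of those $\bv_{_{\cT}}$ for which $1,2,3$ all lie in $\blambda^{(k)}$. On such tableaux $\pos_1=\pos_2=\pos_3$, so by~(\ref{rep-g3}) the generators $g_1,g_2$ act on $\bv_{_{\cT}}$ by exactly the Hecke-type formula~(\ref{rep-G}), with contents depending only on the shape $\blambda^{(k)}$ and on where $1,2,3$ sit inside it. The point is that the action of the subalgebra generated by $g_1,g_2$ on the vectors supported on a fixed "skeleton" (the placement of $4,\dots,n$ being frozen, since $g_1,g_2$ do not move those entries and the $\pos$-values of $1,2,3$ stay equal) is isomorphic to the restriction to $\langle s_1,s_2\rangle$ of the $\mathcal{H}_3(q)$-module $V_{\lambda'}$ (in the sense of Theorem~\ref{thm-rep-hecke}) where $\lambda'$ is obtained from $\blambda^{(k)}$ by recording only the sub-shape filled by $1,2,3$ — concretely, a three-box skew-ish configuration whose contents are those of the three nodes. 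Thus $g_{1,2}$ kills all of $W_k$ iff it kills this classical $\mathcal{H}_3$-submodule, and by the argument already recalled before Proposition~\ref{classical case}, this happens iff the trivial representation of $\langle s_1,s_2\rangle$ is not a summand of that restriction.

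Then I would translate this into a shape condition. Running over all standard $d$-tableaux and all choices of component $k$, the configurations of $1,2,3$ inside $\blambda^{(k)}$ that can occur are: three boxes in a row, three boxes in a column, and the two "L/staircase" shapes of size $3$ — but which of these occur depends on $\blambda^{(k)}$. The trivial representation of $\mathfrak{S}_3$ restricted from $\langle s_1,s_2\rangle$ appears precisely for the configuration "three boxes in a row" (content pattern $c,qc,q^2c$ for some $c$), since that is the one whose associated $\mathcal{H}_3$-module is the index (trivial) representation. A component $\blambda^{(i)}$ with three or more columns necessarily contains, for a suitable standard filling of the whole $d$-tableau, three boxes $1,2,3$ forming a horizontal strip of length $3$ in its first row — so $V_{\blambda}$ fails to pass to the quotient. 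Conversely, if every $\blambda^{(i)}$ has at most two columns, then no component can host a horizontal strip of length $3$, so the trivial representation never appears in the relevant restriction, $g_{1,2}$ annihilates each $W_k$, hence $e_1e_2g_{1,2}$ annihilates all of $V_{\blambda}$. This establishes the equivalence.

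The main obstacle I anticipate is the middle step: making precise and rigorous the claim that the $\langle g_1,g_2\rangle$-action on the subspace $W_k$ (for fixed placement of $4,\dots,n$) really is a direct sum of copies of classical $\mathcal{H}_3(q)$-modules, and correctly identifying which $\mathcal{H}_3$-modules (equivalently, which $\mathfrak{S}_3$-restrictions) arise from which sub-configurations of $1,2,3$ inside $\blambda^{(k)}$. One has to check that (i) $g_1,g_2$ indeed preserve $W_k$ — this uses that when $\pos_i=\pos_{i+1}$ formula~(\ref{rep-g3}) only involves $\bv_{_{\cT}}$ and $\bv_{_{\cT^{s_i}}}$, both still in $W_k$; (ii) the contents $\cc_1,\cc_2,\cc_3$ for a triple of boxes sitting in a genuine Young-diagram sub-configuration are exactly the contents of the size-$3$ partition/skew-shape they form, so that~(\ref{rep-G}) is literally the Hoefsmit formula for $\mathcal{H}_3(q)$; and (iii) the combinatorial statement that "$\blambda^{(i)}$ has $\ge 3$ columns" $\iff$ "some standard $d$-tableau places a length-$3$ horizontal strip as $\{1,2,3\}$ in $\blambda^{(i)}$" — for this one simply fills $\blambda^{(i)}$'s first row first, then the rest, then the other components. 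Once these are nailed down, invoking Proposition~\ref{classical case} (or rather its proof via restriction of $E^{\lambda}$) finishes it cleanly.
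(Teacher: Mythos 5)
Your proposal is correct, and its skeleton is the same as the paper's: both arguments use (\ref{eirep}) to reduce to standard $d$-tableaux with $\pos_1=\pos_2=\pos_3$ and then transfer the vanishing of $g_{1,2}$ to the classical Temperley--Lieb criterion of Proposition \ref{classical case}. The difference is one of granularity. The paper never introduces your subspaces $W_k$ or the fixed-skeleton $\mathcal{H}_3(q)$-modules: for the ``only if'' direction it takes standard $d$-tableaux in which \emph{all} of $1,\ldots,|\blambda^{(i)}|$ fill the component $\blambda^{(i)}$, so that $g_{1,2}(\bv_{_{\cT}})=G_{1,2}(\rv_{_{\cT^{(i)}}})$ with $\cT^{(i)}$ running over all standard tableaux of shape $\blambda^{(i)}$, and Proposition \ref{classical case} applies verbatim to that component; for the ``if'' direction it relabels $\cT^{(\mathrm{p})}$ and quotes the same proposition. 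Your more local route, classifying the three-box configuration occupied by $1,2,3$, is also valid and slightly more elementary (it only uses the $n=3$ instance of the classical criterion), and the obstacle you flag dissolves: in a standard $d$-tableau the entries $1,2,3$, being the three smallest, always occupy a straight Young diagram of size $3$ at the top-left corner of a single component, never a skew configuration, so the only possible shapes are $(3)$, $(2,1)$ and $(1^3)$, with contents $\{1,q,q^2\}$, $\{1,q,q^{-1}\}$ and $\{1,q^{-1},q^{-2}\}$; hence (\ref{rep-g3}) is literally Hoefsmit's formula (\ref{rep-G}), the generators $g_1,g_2$ preserve each fixed-skeleton span, and only the shape $(3)$ survives $g_{1,2}$ (which then acts by $(1+q)(1+q+q^2)\neq 0$), occurring exactly when some $\blambda^{(i)}$ has a row of length at least $3$, i.e.\ at least three columns. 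In particular your list of configurations should be trimmed --- there is no second ``L'', since skew shapes cannot arise --- but this only simplifies your argument, and both directions close exactly as you indicate.
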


\begin{proof}
Let us assume first that $V_{\blambda}$ is an irreducible representation of $\C(q){\rm FTL}_{d,n}(q)$ and let $i \in \{1,\ldots,d\}$.
Set $n_i:=|\blambda^{(i)}|$. If $n_i \leq 2$, then $\blambda^{(i)}$ has at most two columns. If $n_i \geq 3$, let us consider all the standard $d$-tableaux
 $\cT = (\cT^{(1)},\ldots,\cT^{(d)})$ of shape $\blambda$ such that
 $$ \pos_{1}=\pos_{2}=\pos_{3} = \cdots= \pos_{n_i} = i.$$
 Then, using the notation of Theorem \ref{thm-rep-hecke} for the Iwahori--Hecke algebra $\mathcal{H}_{n_i}(q)$ and Equation  (\ref{eirep}), we obtain
 $$G_{1,2}(\rv_{_{{\cT}^{({i})}}}) = g_{1,2}(\bv_{_{\cT}})= g_{1,2}e_1 e_2  (\bv_{_{\cT}}) =e_1 e_2 g_{1,2} (\bv_{_{\cT}})=0$$
Since $\cT^{(i)}$ runs over all the standard tableaux of shape $\blambda^{(i)}$,    
Proposition \ref{classical case} yields that $\blambda^{(i)}$ has at most two columns.

Now assume that $\blambda^{(i)}$ has at most two columns for all $i=1,\ldots,d$.
Let $\cT = (\cT^{(1)},\ldots,\cT^{(d)})$ be a standard $d$-tableau of shape $\blambda$.
If  $\pos_{1}=\pos_{2}=\pos_{3} = :\mathrm{p}$, then, by (\ref{eirep}), $e_1 e_2 g_{1,2} (\bv_{_{\cT}}) =g_{1,2}e_1 e_2  (\bv_{_{\cT}}) = g_{1,2} (\bv_{_{\cT}})$.
In this case,
$g_{1,2}$ acts on $\bv_{_{\cT}}$ in the same way that $G_{1,2}$ acts on $\rv_{_{{\cT}^{(\mathrm{p})}}}$ (replacing the entries greater than $3$ by entries in $\{4,\ldots,|\blambda^{(\rm p)}|\}$).  
Following Proposition \ref{classical case},
we have $g_{1,2} (\bv_{_{\cT}}) = 0$.
Otherwise, again by (\ref{eirep}), we have $e_1e_2  (\bv_{_{\cT}}) =0$, so $e_1 e_2 g_{1,2} (\bv_{_{\cT}}) =g_{1,2}e_1 e_2  (\bv_{_{\cT}})=0$ as desired.
\end{proof}

\subsection{The dimension of $\C(q){\rm FTL}_{d,n}(q)$} We will now use the complete description of the irreducible representations of $\C(q){\rm FTL}_{d,n}(q)$ by
Theorem \ref{res1} to obtain a dimension formula for $\C(q){\rm FTL}_{d,n}(q)$. Set
$${\rm Comp}_d(n):= \{\mu=(\mu_1,\mu_2,\ldots,\mu_d) \in \N^d\,|\, \mu_1+\mu_2+\cdots+\mu_d = n\}.$$

\begin{thm}\label{res2} We have
$${\rm dim}_{\C(q)}(\C(q){\rm FTL}_{d,n}(q)) = \sum_{\mu \in {\rm Comp}_d(n)} \left( \frac{n!}{\mu_1!\mu_2!\ldots \mu_d!}\right)^2 C_{\mu_1}C_{\mu_2} \cdots C_{\mu_d}.$$
\end{thm}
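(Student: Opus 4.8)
The plan is to exploit that $\C(q){\rm FTL}_{d,n}(q)$ is split semisimple (it is a quotient of the split semisimple algebra $\C(q){\rm Y}_{d,n}(q)$, and semisimplicity is already noted above), so that its dimension equals the sum of the squares of the dimensions of its irreducible representations, and then to read off those dimensions from the combinatorics of $d$-tableaux together with Theorem \ref{res1}.

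First, by Theorem \ref{res1}, the irreducible representations of $\C(q){\rm FTL}_{d,n}(q)$ are exactly the modules $V_{\blambda}$ with $\blambda=(\blambda^{(1)},\dots,\blambda^{(d)})\in\mathcal{P}(d,n)$ such that each $\blambda^{(i)}$ has at most two columns. By construction, $\dim_{\C(q)}V_{\blambda}$ equals the number of standard $d$-tableaux of shape $\blambda$. Such a tableau is specified by first distributing the entries $\{1,\dots,n\}$ into ordered blocks of sizes $\mu_i:=|\blambda^{(i)}|$ — which can be done in $\frac{n!}{\mu_1!\cdots\mu_d!}$ ways — and then filling each diagram $\blambda^{(i)}$ with its block of size $\mu_i$ so that entries increase along rows and down columns; since the latter condition depends only on the relative order of the entries, the number of such fillings is $f^{\blambda^{(i)}}$, the number of standard tableaux of shape $\blambda^{(i)}$ in the usual sense. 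Hence
\[
\dim_{\C(q)}V_{\blambda}=\frac{n!}{\mu_1!\cdots\mu_d!}\,\prod_{i=1}^{d}f^{\blambda^{(i)}}.
\]

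Next I would group the irreducibles according to the composition $\mu=(\mu_1,\dots,\mu_d)\in{\rm Comp}_d(n)$ recording the sizes $\mu_i=|\blambda^{(i)}|$. Using semisimplicity and the fact that, for fixed $\mu$, the constraints on the components $\blambda^{(i)}$ are independent (so the sum over $\blambda$ factors as a product over $i$), one gets
\[
\dim_{\C(q)}\C(q){\rm FTL}_{d,n}(q)=\sum_{\mu\in{\rm Comp}_d(n)}\left(\frac{n!}{\mu_1!\cdots\mu_d!}\right)^{2}\prod_{i=1}^{d}\Bigl(\sum_{\substack{\lambda\vdash\mu_i\\ \lambda\text{ has}\leq 2\text{ columns}}}(f^{\lambda})^{2}\Bigr).
\]
Finally, by Proposition \ref{classical case} the partitions $\lambda\vdash m$ with at most two columns label precisely the irreducible representations $V_{\lambda}$ of $\C(q){\rm TL}_m(q)$, and $\dim_{\C(q)}V_{\lambda}=f^{\lambda}$ by Theorem \ref{thm-rep-hecke}; since $\C(q){\rm TL}_m(q)$ is semisimple, Proposition \ref{dim-TL} gives $\sum_{\lambda\vdash m,\ \leq 2\text{ cols}}(f^{\lambda})^{2}=\dim_{\C(q)}\C(q){\rm TL}_m(q)=C_m$. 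Substituting this for $m=\mu_1,\dots,\mu_d$ yields the stated formula. The only genuinely delicate point is the counting in the second step — the bijective decomposition of a standard $d$-tableau into a block distribution together with standard tableaux on the individual components — but this is routine; everything else is bookkeeping on top of Theorem \ref{res1} and the classical facts already recalled for $\C(q){\rm TL}_m(q)$.
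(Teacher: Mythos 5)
Your argument is correct and is essentially the same as the paper's proof: both use split semisimplicity together with Theorem \ref{res1}, count standard $d$-tableaux of shape $\blambda$ as $\frac{n!}{\mu_1!\cdots\mu_d!}\prod_i f^{\blambda^{(i)}}$, group the $d$-partitions by the composition $\mu$ of component sizes, factor the inner sum as a product, and identify each factor with $C_{\mu_i}$ via Proposition \ref{dim-TL}. No gaps to report.
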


\begin{proof}
Let us denote by $\mathcal{P}^{\leq 2}(d,n)$ the set of $d$-partitions $\blambda$ of $n$ such that the Young diagram of $\blambda^{(i)}$ has at most two columns for all $i=1,\ldots,d$. By Theorem \ref{res1}, and since the algebra $\C(q){\rm FTL}_{d,n}(q)$ is semisimple, 
we have
$${\rm dim}_{\C(q)}(\C(q){\rm FTL}_{d,n}(q)) = \sum_{\blambda \in \mathcal{P}^{\leq 2}(d,n)} \mathrm{dim}_{\C(q)} (V_{\blambda})^2,$$
where $\mathrm{dim}_{\C(q)} (V_{\blambda})$ is the number of standard $d$-tableaux of shape $\blambda$.

Fix $\mu \in {\rm Comp}_d(n)$.
We denote by $\mathcal{P}^{\leq 2}(\mu)$ the set of all $d$-partitions $\blambda$ in $\mathcal{P}^{\leq 2}(d,n)$ such that $|\blambda^{(i)}|=\mu_i$ for all  $i=1,\ldots,d$.
We have
$${\rm dim}_{\C(q)}(\C(q){\rm FTL}_{d,n}(q)) = \sum_{\mu \in {\rm Comp}_d(n)}\sum_{\blambda \in \mathcal{P}^{\leq 2}(\mu)} \mathrm{dim}_{\C(q)} (V_{\blambda})^2.$$

Let $\blambda \in \mathcal{P}^{\leq 2}(\mu)$.
 We have
$$\binom{n}{\mu_1}\binom{n-\mu_1}{\mu_2}\binom{n-\mu_1-\mu_2}{\mu_3}\cdots \binom{n-\mu_1-\mu_2-\cdots - \mu_{d-1}}{\mu_d}=\frac{n!}{\mu_1!\mu_2!\ldots \mu_d!}$$
ways to choose the numbers in $\{1,\ldots,n\}$ that will be placed in the nodes of the Young diagram of $\blambda^{(i)}$ for each $i=1,\ldots,d$. We deduce that
$$\mathrm{dim}_{\C(q)} (V_{\blambda}) = \frac{n!}{\mu_1!\mu_2!\ldots \mu_d!} \prod_{i=1}^d \mathrm{dim}_{\C(q)} (V_{\blambda^{(i)}})\ ,$$
where $V_{\blambda^{(i)}}$ is the irreducible representation of $\C(q){\rm TL}_{\mu_i}(q)$ labelled by $\blambda^{(i)}$.
We thus obtain that  
$$ {\rm dim}_{\C(q)}(\C(q){\rm FTL}_{d,n}(q))=\sum_{\mu \in {\rm Comp}_d(n)}\left( \frac{n!}{\mu_1!\mu_2!\ldots \mu_d!}\right)^2 \sum_{\blambda \in \mathcal{P}^{\leq 2}(\mu)} 
\prod_{i=1}^d \mathrm{dim}_{\C(q)} (V_{\blambda^{(i)}})^2.$$

We now have that 
$$\sum_{\blambda \in \mathcal{P}^{\leq 2}(\mu)} 
\prod_{i=1}^d \mathrm{dim}_{\C(q)} (V_{\blambda^{(i)}})^2$$
is equal to
$$\sum_{\blambda^{(1)} \in \mathcal{P}^{\leq 2}(1,\mu_1)}\sum_{\blambda^{(2)} \in \mathcal{P}^{\leq 2}(1,\mu_2)}\ldots \sum_{\blambda^{(d)} \in \mathcal{P}^{\leq 2}(1,\mu_d)}
\prod_{i=1}^d \mathrm{dim}_{\C(q)} (V_{\blambda^{(i)}})^2,
$$
which in turn is equal to 
$$\prod_{i=1}^d \left( \sum_{\blambda^{(i)} \in \mathcal{P}^{\leq 2}(1,\mu_i)} \mathrm{dim}_{\C(q)} (V_{\blambda^{(i)}})^2 \right).$$
By Proposition \ref{dim-TL}, we have that
$$\sum_{\blambda^{(i)} \in \mathcal{P}^{\leq 2}(1,\mu_i)} \mathrm{dim}_{\C(q)} (V_{\blambda^{(i)}})^2 = {\rm dim}_{\C(q)}(\C(q){\rm TL}_{\mu_i}(q)) =C_{\mu_i}\ ,$$
 for all $i=1,\ldots,d$.
We conclude that
$${\rm dim}_{\C(q)}(\C(q){\rm FTL}_{d,n}(q)) = \sum_{\mu \in {\rm Comp}_d(n)} \left( \frac{n!}{\mu_1!\mu_2!\ldots \mu_d!}\right)^2 C_{\mu_1}C_{\mu_2} \cdots C_{\mu_d}.$$

\end{proof}

\section{An isomorphism theorem for the Framisation of the Temperley--Lieb algebra}\label{sec-iso}

Lusztig has proved that Yokonuma--Hecke algebras are isomorphic to  direct sums of matrix algebras over certain subalgebras of classical Iwahori--Hecke algebras \cite[\S 34]{Lu}. For the Yokonuma--Hecke algebras $\YH$, these are all Iwahori--Hecke algebras of type $A$. This result was reproved in \cite{JP} using the presentation of $\YH$ given by Juyumaya. Since we use the same presentation, we will use the latter exposition of the result in order to prove an analogous statement for $\FTL$. Note that in both cases the result has been obtained over the ring $\C[q^{1/2},q^{-1/2}]$ (with the use of the generators $\wt{g}_i$ defined in Remark \ref{ref rem}).  We will show here that it is still valid over the smaller ring $\C[q,q^{-1}]$.

\subsection{Compositions and Young subgroups}\label{Young}
Let $\mu \in {\rm Comp}_d(n)$, where
$${\rm Comp}_d(n)= \{\mu=(\mu_1,\mu_2,\ldots,\mu_d) \in \N^d\,|\, \mu_1+\mu_2+\cdots+\mu_d = n\}.$$
We say that $\mu$ is a {\em composition of $n$ with $d$ parts}. 
The Young subgroup $\mathfrak{S}_\mu$  of $\mathfrak{S}_n$ is the subgroup $\mathfrak{S}_{\mu_1}\times \mathfrak{S}_{\mu_2} \times \cdots 
\times \mathfrak{S}_{\mu_d}$, where  $\mathfrak{S}_{\mu_1}$ acts on the letters
$\{1,\ldots,\mu_1\}$, $\mathfrak{S}_{\mu_2}$ acts on the letters
$\{\mu_1+1,\ldots,\mu_1+\mu_2\}$, and so on. Thus, $\mathfrak{S}_\mu$ is a parabolic subgroup of $\mathfrak{S}_n$ generated by the transpositions $s_j=(j,j+1)$ with $j \in J^\mu :=\{1,\ldots,n-1\} \setminus \{\mu_1,\mu_1+\mu_2,\ldots,\mu_1+\mu_2+\cdots+\mu_{d-1}\}$.

We  have an Iwahori--Hecke algebra $\mathcal{H}^\mu(q)$ associated with $\mathfrak{S}_\mu$, which is the subalgebra of $\mathcal{H}_n(q)$ generated by $\{G_j\,|\, j \in J^\mu\}$. The algebra $\mathcal{H}^\mu(q)$ is a free $\C[q,q^{-1}]$-module 
with basis $\{G_w \,|\, w \in \mathfrak{S}_\mu\}$, and it is isomorphic to the tensor product (over $\C[q,q^{-1}]$) of Iwahori--Hecke algebras 
$\mathcal{H}_{\mu_1}(q) \otimes \mathcal{H}_{\mu_2}(q) \otimes \cdots \otimes \mathcal{H}_{\mu_d}(q)$
(with $\mathcal{H}_{\mu_i}(q) \cong \C[q,q^{-1}]$ if $\mu_i \leq 1$).

For $i=1,\ldots,d$, we denote by $\rho_i$ the natural surjection $\mathcal{H}_{\mu_i}(q) \twoheadrightarrow 
\mathcal{H}_{\mu_i}(q) /I_{\mu_i} \cong {\rm TL}_{\mu_i}(q)$, where
$I_{\mu_i}$ is the ideal generated by $G_{\mu_1+\cdots+\mu_{i-1}+1,\mu_1+\cdots+\mu_{i-1}+2}$  
if $\mu_i >2$ and $I_{\mu_i} =\{0\}$ if $\mu_i \leq 2$.
We obtain that
$\rho^\mu:=\rho_1 \otimes \rho_2 \otimes \cdots \otimes \rho_d$ is a surjective $\C[q,q^{-1}]$-algebra homomorphism
$\mathcal{H}^\mu(q)\twoheadrightarrow {\rm TL}^{\mu}(q)$, where  ${\rm TL}^{\mu}(q)$ denotes the tensor product of Temperley--Lieb algebras ${\rm TL}_{\mu_1}(q) \otimes {\rm TL}_{\mu_2}(q) \otimes \cdots \otimes {\rm TL}_{\mu_d}(q)$.

\subsection{Two isomorphism theorems for the Yokonuma--Hecke algebra $\YH$}
Let $\{\xi_1,\dots,\xi_d\}$ be the set of all $d$-th roots of unity (ordered arbitrarily).  
Let $\chi$ be an irreducible character of the abelian group $\mathcal{A}_{d,n} \cong (\Z/d\Z)^n$ generated by the elements $t_1,t_2,\ldots,t_n$. 
There exists a  primitive idempotent of $\C[\mathcal{A}_{d,n}]$ associated with $\chi$ defined as 
$$E_\chi := \prod_{j=1}^n \left(\frac{1}{d} \sum_{s=0}^{d-1} \chi(t_j^s) t_j^{-s}\right) = \prod_{j=1}^n \left(\frac{1}{d} \sum_{s=0}^{d-1} \chi(t_j)^s t_j^{-s}\right) .$$
Moreover, we can define a composition $\mu^\chi \in{\rm Comp}_d(n)$ by setting
$$\mu_i^\chi := \# \{ j \in \{1,\ldots,n\}\,|\, \chi(t_j) = \xi_i\} \quad \text{for all } i=1,\ldots,d.$$

Conversely, given  a composition $\mu \in {\rm Comp}_d(n)$, we can consider the subset $\Irr^\mu(\mathcal{A}_{d,n})$ of $\Irr(\mathcal{A}_{d,n})$
defined as
$$\Irr^\mu(\mathcal{A}_{d,n}) := \{ \chi \in \Irr(\mathcal{A}_{d,n})\,|\, \mu^\chi = \mu\}.$$
There is an action of $\mathfrak{S}_n$ on $\Irr^\mu(\mathcal{A}_{d,n})$ given by
$$ w(\chi)(t_j) := \chi(t_{w^{-1}(j)}) \quad \text{ for all } w\in \mathfrak{S}_n, \, j=1,\ldots,n.$$
Let $\chi_1^\mu \in \Irr^\mu(\mathcal{A}_{d,n})$ be the character given by
$$\left\{\begin{array}{ccccccc} 
\chi_1^\mu (t_1) & = & \cdots & = & \chi_1^\mu(t_{\mu_1}) & = & \xi_1 \\
\chi_1^\mu (t_{\mu_1+1}) & = & \cdots & = & \chi_1^\mu(t_{\mu_1+\mu_2}) & = & \xi_2 \\
\chi_1^\mu (t_{\mu_1+\mu_2+1}) & = & \cdots & = & \chi_1^\mu(t_{\mu_1+\mu_2+\mu_3}) & = & \xi_3 \\
\vdots &  \vdots  & \vdots & \vdots  & \vdots  & \vdots &\vdots  \\
\chi_1^\mu (t_{\mu_1+\cdots+\mu_{d-1}+1}) & = & \cdots & = & \chi_1^\mu(t_{n}) & = & \xi_d \\
\end{array}\right.$$
The stabiliser of $\chi_1^\mu$ under the action of $\mathfrak{S}_n$ is the Young subgroup $\mathfrak{S}_\mu$. 
In each left coset in $\mathfrak{S}_n/\mathfrak{S}_\mu$,
we can take a representative of minimal length; such a representative is unique (see, for example, \cite[\S 2.1]{GePf}). 
Let 
$$\{ \pi_{\mu,1}, \pi_{\mu,2},\ldots, \pi_{\mu,m_\mu}\}$$
be this set of distinguished left coset representatives of $\mathfrak{S}_n/\mathfrak{S}_\mu$,
with
$$m_\mu = \frac{n!}{\mu_1!\mu_2!\ldots \mu_d!}\,$$
and the convention that $\pi_{\mu,1}=1$. 
Then, if we set
$$\chi_k^{\mu} := \pi_{\mu,k}(\chi_1^{\mu}) \quad \text{for all } k=1,\ldots,m_\mu,$$
we have
$$ \Irr^\mu(\mathcal{A}_{d,n}) = \{ \chi_1^{\mu}, \chi_2^{\mu},\ldots, \chi_{m_\mu}^{\mu}\}.$$

We now set
$$E_\mu: = \sum_{\chi \in \Irr^\mu(\mathcal{A}_{d,n})} E_\chi = \sum_{k=1}^{m_\mu} E_{\chi_k^\mu} .$$
Since
the set $\{E_\chi \,|\, \chi \in \Irr(\mathcal{A}_{d,n})\}$ forms a complete set of orthogonal idempotents in $\YH$, and
\begin{equation}\label{Echi}
 t_j E_\chi = E_\chi t_j = \chi(t_j)E_\chi \quad \text{and} \quad g_wE_\chi = E_{w(\chi)}g_w
\end{equation}
for all $\chi \in \Irr(\mathcal{A}_{d,n})$, $j=1,\ldots,n$ and $w \in \mathfrak{S}_n$, we have that the set $\{E_\mu \,| \,\mu \in {\rm Comp}_d(n)\}$ forms a complete set of central orthogonal idempotents in $\YH$ (cf.~\cite[\S 2.4]{JP}). In particular, we have the following decomposition of $\YH$ into a direct sum of two-sided ideals:
$$\YH = \bigoplus_{\mu \in {\rm Comp}_d(n)} E_\mu \YH.$$

For the moment, let us consider all the algebras defined over the Laurent polynomial ring $\C[q^{1/2},q^{-1/2}]$ (by extension of scalars).
Let $\ell : \mathfrak{S}_n \rightarrow \N$ denote the length function on $\mathfrak{S}_n$.
We define a  $\C[q^{1/2},q^{-1/2}]$-linear map
$$\widetilde{\Psi}_\mu : E_\mu \YH \rightarrow {\rm Mat}_{m_\mu}(\mathcal{H}^\mu(q))$$
as follows:
 for all $k \in \{1,\ldots,m_\mu\}$ and $w \in \mathfrak{S}_n$,  we set
$$ \widetilde{\Psi}_\mu(E_{\chi_k^\mu}g_w) := q^{\frac{1}{2}(\ell(w)-\ell(\pi_{\mu,k}^{-1}w\pi_{\mu,l}))}G_{\pi_{\mu,k}^{-1}w\pi_{\mu,l}} M_{k,l}  \,, $$ 
where  $l \in \{1,\ldots,m_\mu\}$ is uniquely defined by the relation $w(\chi_l^\mu)=\chi_k^\mu$ and
$M_{k,l}$ is the elementary $m_\mu \times m_\mu$ matrix with $1$ in position $(k,l)$.
Note that $\pi_{\mu,k}^{-1}w\pi_{\mu,l} \in \mathfrak{S}_\mu$.
We also define a $\C[q^{1/2},q^{-1/2}]$-linear map
$$\widetilde{\Phi}_\mu :  {\rm Mat}_{m_\mu}(\mathcal{H}^\mu(q)) \rightarrow E_\mu \YH $$
as follows: for all $k,l \in \{1,\ldots,m_\mu\}$ and $w \in \mathfrak{S}_\mu$,  we set
$$ \widetilde{\Phi}_\mu(G_w M_{k,l}) :=   q^{\frac{1}{2}(\ell(w)-\ell(\pi_{\mu,k}^{-1}w\pi_{\mu,l}))}E_{\chi_k^\mu}g_{\pi_{\mu,k}w\pi_{\mu,l}^{-1}} E_{\chi_l^\mu}.$$ 
Using the generators $\wt{g}_i$ and $\widetilde{G}_i$ defined in Remark \ref{ref rem}, the above maps are equivalent to
$$ \widetilde{\Psi}_\mu(E_{\chi_k^\mu}\wt{g}_w) := \widetilde{G}_{\pi_{\mu,k}^{-1}w\pi_{\mu,l}} M_{k,l} \quad  \text{ and } \quad  \widetilde{\Phi}_\mu(\widetilde{G}_w M_{k,l}) :=   E_{\chi_k^\mu}\wt{g}_{\pi_{\mu,k}w\pi_{\mu,l}^{-1}} E_{\chi_l^\mu}.$$
Then we have the following \cite[Theorem 3.1]{JP}:

\begin{thm} \label{thm-iso-JP}
Let $\mu \in {\rm Comp}_d(n)$.  The linear map $\widetilde{\Psi}_\mu$
is an isomorphism of $\C[q^{1/2},q^{-1/2}]$-algebras with inverse map $\widetilde{\Phi}_\mu$. As a consequence, the map
$$\widetilde{\Psi}_n:= \bigoplus_{\mu \in {\rm Comp}_d(n)}  \widetilde{\Psi}_\mu :  \YH \rightarrow \bigoplus_{\mu \in {\rm Comp}_d(n)}  {\rm Mat}_{m_\mu}(\mathcal{H}^\mu(q))$$
is also an isomorphism of $\C[q^{1/2},q^{-1/2}]$-algebras, with inverse map
$$\widetilde{\Phi}_n:= \bigoplus_{\mu \in {\rm Comp}_d(n)}  \widetilde{\Phi}_\mu :   \bigoplus_{\mu \in {\rm Comp}_d(n)}  {\rm Mat}_{m_\mu}(\mathcal{H}^\mu(q)) \rightarrow  \YH.$$
\end{thm}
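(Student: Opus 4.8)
The plan is to verify directly that $\widetilde{\Psi}_\mu$ and $\widetilde{\Phi}_\mu$ are mutually inverse $\C[q^{1/2},q^{-1/2}]$-linear bijections and that $\widetilde{\Psi}_\mu$ respects multiplication; the assembled map $\widetilde{\Psi}_n$ is then automatically an isomorphism since the idempotents $E_\mu$ are central and orthogonal with $\sum_\mu E_\mu = 1$, giving $\YH = \bigoplus_\mu E_\mu\YH$.

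First I would check that $\widetilde{\Psi}_\mu$ and $\widetilde{\Phi}_\mu$ are well defined. For $\widetilde{\Psi}_\mu$ one must note that the set $\{E_{\chi_k^\mu} g_w \mid 1 \le k \le m_\mu,\ w \in \mathfrak{S}_n\}$ is a $\C[q^{1/2},q^{-1/2}]$-basis of $E_\mu\YH$: indeed $E_\mu \YH$ is free of rank $m_\mu \cdot n!$ (using $E_\mu = \sum_k E_{\chi_k^\mu}$ and the standard basis $\{t_1^{a_1}\cdots t_n^{a_n} g_w\}$ of $\YH$, together with $t_j E_\chi = \chi(t_j) E_\chi$), and the listed family has the right cardinality and spans. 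The index $l$ with $w(\chi_l^\mu) = \chi_k^\mu$ is uniquely determined because $\mathfrak{S}_n$ acts transitively on $\Irr^\mu(\mathcal{A}_{d,n})$ with stabiliser $\mathfrak{S}_\mu$, and then $\pi_{\mu,k}^{-1} w \pi_{\mu,l}$ stabilises $\chi_1^\mu$, hence lies in $\mathfrak{S}_\mu$, so $G_{\pi_{\mu,k}^{-1} w \pi_{\mu,l}} \in \mathcal{H}^\mu(q)$ makes sense. For $\widetilde{\Phi}_\mu$, well-definedness is immediate since $\{G_w M_{k,l}\}$ is literally a basis of $\mathrm{Mat}_{m_\mu}(\mathcal{H}^\mu(q))$.

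Next I would show $\widetilde{\Phi}_\mu \circ \widetilde{\Psi}_\mu = \mathrm{id}$ and $\widetilde{\Psi}_\mu \circ \widetilde{\Phi}_\mu = \mathrm{id}$ on the respective bases; this is a bookkeeping computation using $g_w E_\chi = E_{w(\chi)} g_w$ and the fact that for $v \in \mathfrak{S}_\mu$ one has $\pi_{\mu,k} v \pi_{\mu,l}^{-1}$ as a product whose length behaves additively in the relevant way (so the $q^{1/2}$-exponents cancel). The genuinely substantive step is multiplicativity: I need $\widetilde{\Psi}_\mu(E_{\chi_k^\mu} g_w \cdot E_{\chi_{k'}^\mu} g_{w'}) = \widetilde{\Psi}_\mu(E_{\chi_k^\mu} g_w)\widetilde{\Psi}_\mu(E_{\chi_{k'}^\mu} g_{w'})$. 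The left side is nonzero only when $k' = l$ (the index attached to $w$), and then one must expand $g_w g_{w'}$ in $\YH$ using the quadratic relation $g_i^2 = q + (q-1)e_i g_i$; the key point — already implicit in \cite{JP} — is that after multiplying by the idempotent $E_{\chi_k^\mu}$ on the left, each $e_i$ that appears acts as the scalar $0$ or $1$ according to whether the corresponding pair of $t$-eigenvalues differ, matching exactly the behaviour of the quadratic relation $G_i^2 = q + (q-1)G_i$ inside $\mathcal{H}^\mu(q)$ when both strands carry the same character value. This reduces the identity to the corresponding statement for $G_w G_{w'}$ in $\mathcal{H}^\mu(q)$, with the $q^{1/2}$-powers tracking the length defect exactly as in the classical Iwahori–Hecke multiplication rule. \textbf{This multiplicativity check is the main obstacle}; everything else is formal. (In practice, since \cite[Theorem 3.1]{JP} is already established over $\C[q^{1/2},q^{-1/2}]$ with the $\wt g_i$ generators, one may simply invoke it and only verify that the reformulation in terms of the $g_i$ via $g_i = q^{1/2}\wt g_i$ and $G_i = q^{1/2}\widetilde G_i$ produces precisely the displayed formulas for $\widetilde{\Psi}_\mu$ and $\widetilde{\Phi}_\mu$ — a short substitution using $\ell(w) - \ell(\pi_{\mu,k}^{-1}w\pi_{\mu,l})$ as the bookkeeping exponent.)

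Finally, the global statement follows: $\widetilde{\Psi}_n = \bigoplus_\mu \widetilde{\Psi}_\mu$ is an algebra homomorphism because the $E_\mu\YH$ are two-sided ideals with $E_\mu E_{\mu'} = \delta_{\mu,\mu'} E_\mu$, it is bijective because each summand is, and $\widetilde{\Phi}_n = \bigoplus_\mu \widetilde{\Phi}_\mu$ is its inverse summandwise. This completes the proof.
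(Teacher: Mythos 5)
Your proposal is correct in substance, but note that the paper itself does not prove this statement at all: Theorem \ref{thm-iso-JP} is imported verbatim from \cite[Theorem 3.1]{JP}, and the only verification implicit in the paper is the observation (made just before the theorem) that under the rescalings $g_i=q^{1/2}\wt{g}_i$ and $G_i=q^{1/2}\widetilde{G}_i$ the displayed formulas for $\widetilde{\Psi}_\mu$ and $\widetilde{\Phi}_\mu$ coincide with the formulas of \cite{JP} in terms of $\wt{g}_w$ and $\widetilde{G}_w$ --- which is exactly your parenthetical fallback, with the exponent $\frac{1}{2}(\ell(w)-\ell(\pi_{\mu,k}^{-1}w\pi_{\mu,l}))$ accounting for $g_w=q^{\ell(w)/2}\wt{g}_w$. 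Your longer from-scratch route essentially reconstructs the proof of \cite{JP} rather than anything in this paper: the points you isolate are the right ones (that $\{E_{\chi_k^\mu}g_w\}$ is a $\C[q^{1/2},q^{-1/2}]$-basis of $E_\mu\YH$; that $l$ is unique because $\mathfrak{S}_n$ acts transitively on $\Irr^\mu(\mathcal{A}_{d,n})$ with stabiliser $\mathfrak{S}_\mu$ at $\chi_1^\mu$, so $\pi_{\mu,k}^{-1}w\pi_{\mu,l}\in\mathfrak{S}_\mu$; and that multiplicativity reduces, via $g_wE_\chi=E_{w(\chi)}g_w$ and the fact that $E_\chi$ turns each $e_i$ into $0$ or $1$, to the Iwahori--Hecke quadratic relation, with the case $\chi(t_i)\neq\chi(t_{i+1})$ giving $g_i^2E_\chi=qE_\chi$, matched by the product of two off-diagonal entries $q^{1/2}$ supplied by Deodhar's lemma). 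The multiplicativity step is only sketched --- a complete argument would check the homomorphism property on the generators $t_j$ and $g_i$ against the basis, with a case analysis on whether lengths add and whether the relevant characters agree --- but this is a matter of detail, not a wrong idea; what it buys you is independence from \cite{JP}, whereas the paper's citation route gets the theorem for free and reserves its own work for the descent to $\C[q,q^{-1}]$ in Theorem \ref{our iso}.
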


We will now show that we can construct similar isomorphisms over the smaller ring $\C[q,q^{-1}]$. In order to do this, we will make use of Deodhar's lemma (see, for example, \cite[Lemma 2.1.2]{GePf}) about the distinguished left coset representatives of $\mathfrak{S}_n/\mathfrak{S}_\mu$.

\begin{lem}\label{Deodhar} {\rm (Deodhar's lemma)} Let $\mu \in {\rm Comp}_d(n)$. For all $k \in \{1,\ldots,m_\mu\}$ and $i= 1,\ldots,n-1$, 
let $l \in \{1,\ldots,m_\mu\}$ be uniquely defined by the relation $s_i(\chi_l^\mu)=\chi_k^\mu$.
We have
$$\pi_{\mu,k}^{-1}s_i \pi_{\mu,l} = \left\{ \begin{array}{cl}
 1 & \text{ if } k\neq l ; \\ & \\
s_j  & \text{ if } k = l,
\end{array}\right.$$
for some $j \in J^\mu$.
\end{lem}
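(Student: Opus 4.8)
The plan is to reduce the statement to the classical combinatorics of minimal-length coset representatives in the Coxeter group $\mathfrak{S}_n$, which is exactly the content of Deodhar's lemma \cite[Lemma 2.1.2]{GePf}, and then (for completeness) to recall the short argument via the exchange condition. First I would translate the hypothesis about characters into a statement about cosets. Since $\mathfrak{S}_\mu$ is the stabiliser of $\chi_1^\mu$ under the action of $\mathfrak{S}_n$ and $\chi_m^\mu=\pi_{\mu,m}(\chi_1^\mu)$ for all $m$, the assignment $\pi_{\mu,m}\mathfrak{S}_\mu\mapsto\chi_m^\mu$ is an $\mathfrak{S}_n$-equivariant bijection between $\mathfrak{S}_n/\mathfrak{S}_\mu$ (with $\mathfrak{S}_n$ acting by left translation) and $\Irr^\mu(\mathcal{A}_{d,n})$. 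Hence $s_i(\chi_l^\mu)=\chi_k^\mu$ is equivalent to $s_i\pi_{\mu,l}\mathfrak{S}_\mu=\pi_{\mu,k}\mathfrak{S}_\mu$; that is, $\pi_{\mu,k}$ is the unique minimal-length representative of the coset containing $s_i\pi_{\mu,l}$. In particular $\pi_{\mu,k}^{-1}s_i\pi_{\mu,l}\in\mathfrak{S}_\mu$, and it remains only to show that this element is $1$ when $k\neq l$ and a Coxeter generator $s_j$ with $j\in J^\mu$ when $k=l$. Throughout I set $z:=\pi_{\mu,l}$ and use that, $z$ being distinguished, $\ell(z s_j)=\ell(z)+1$ for every $j\in J^\mu$, while $\ell(s_i z)=\ell(z)\pm 1$ since $s_i$ is a Coxeter generator.

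\emph{Case $\ell(s_i z)=\ell(z)-1$.} Here I would show that $s_i z$ is again minimal in its coset. Fix $j\in J^\mu$. Computing $\ell(s_i z s_j)$ as $\ell\bigl((s_i z)s_j\bigr)$ gives $\ell(s_i z s_j)\in\{\ell(z)-2,\ell(z)\}$, while computing it as $\ell\bigl(s_i(z s_j)\bigr)$ gives $\ell(s_i z s_j)\in\{\ell(z),\ell(z)+2\}$; comparing, $\ell(s_i z s_j)=\ell(z)>\ell(s_i z)$. Since this holds for all $j\in J^\mu$, $s_i z$ is minimal in $s_i z\mathfrak{S}_\mu$, so $\pi_{\mu,k}=s_i z$ and $\pi_{\mu,k}^{-1}s_i\pi_{\mu,l}=1$; moreover $k\neq l$ because $s_i z\neq z$.

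\emph{Case $\ell(s_i z)=\ell(z)+1$.} If $s_i z$ is minimal in its coset, then as before $\pi_{\mu,k}=s_i z$ and $\pi_{\mu,k}^{-1}s_i\pi_{\mu,l}=1$ with $k\neq l$. Otherwise there is $j\in J^\mu$ with $\ell(s_i z s_j)<\ell(s_i z)$. I would take a reduced expression $z s_j=s_{b_1}\cdots s_{b_{p+1}}$ with $p=\ell(z)$ and $s_{b_{p+1}}=s_j$, so that $s_{b_1}\cdots s_{b_p}$ is reduced for $z$. Since $\ell\bigl(s_i(z s_j)\bigr)<\ell(z s_j)$, the exchange condition yields an index $r$ with $s_i z s_j=s_{b_1}\cdots\widehat{s_{b_r}}\cdots s_{b_{p+1}}$. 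If $r\leq p$, cancelling $s_j$ gives $s_i z=s_{b_1}\cdots\widehat{s_{b_r}}\cdots s_{b_p}$, an expression of length $p-1$, contradicting $\ell(s_i z)=p+1$; hence $r=p+1$, so $s_i z=z s_j$. Then $s_i\pi_{\mu,l}=\pi_{\mu,l}s_j\in\pi_{\mu,l}\mathfrak{S}_\mu$, which forces $\pi_{\mu,k}=\pi_{\mu,l}$, i.e. $k=l$, and therefore $\pi_{\mu,k}^{-1}s_i\pi_{\mu,l}=s_j$ with $j\in J^\mu$. Combining the two cases proves the lemma.

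I do not expect any real obstacle: the input is entirely classical Coxeter-group combinatorics, and the only place where some care is needed is the exchange-condition step in the second case, where the spurious index $r\leq p$ has to be excluded using the length hypothesis. In fact one could stop after the first paragraph and simply invoke \cite[Lemma 2.1.2]{GePf}; the case analysis above is merely the self-contained version of that result, specialised to $\mathfrak{S}_n$ and phrased in terms of the action on $\Irr^\mu(\mathcal{A}_{d,n})$.
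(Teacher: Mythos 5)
Your proposal is correct. Note, though, that the paper does not prove this statement at all: it treats it as the classical Deodhar lemma and simply cites \cite[Lemma 2.1.2]{GePf}, so the "official" argument is exactly the one-line reduction you describe in your first paragraph — identify $\Irr^\mu(\mathcal{A}_{d,n})$ with $\mathfrak{S}_n/\mathfrak{S}_\mu$ via the $\mathfrak{S}_n$-equivariant bijection $\pi_{\mu,m}\mathfrak{S}_\mu\mapsto\chi_m^\mu$ (the stabiliser of $\chi_1^\mu$ being $\mathfrak{S}_\mu$), so that $s_i(\chi_l^\mu)=\chi_k^\mu$ says precisely that $\pi_{\mu,k}$ is the distinguished representative of $s_i\pi_{\mu,l}\mathfrak{S}_\mu$, and then quote the standard result for minimal coset representatives in a Coxeter group. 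What you add is a self-contained proof of that standard result: the case $\ell(s_iz)=\ell(z)-1$ handled by the two-way length estimate on $\ell(s_izs_j)$, and the case $\ell(s_iz)=\ell(z)+1$ handled by the exchange condition, where you correctly rule out the spurious deletion index $r\leq p$ using $\ell(s_iz)=p+1$ and conclude $s_iz=zs_j$, hence $k=l$ and $\pi_{\mu,k}^{-1}s_i\pi_{\mu,l}=s_j$ with $j\in J^\mu$. Both routes are sound; the citation is shorter, while your version makes the lemma independent of the reference and makes explicit the translation from the character action to cosets, which the paper leaves implicit.
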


Deodhar's lemma implies that, for all $i=1,\ldots,n-1$, $\widetilde{\Psi}_\mu(E_\mu g_i)$ is a symmetric matrix whose diagonal non-zero coefficients are of the form $G_j$ with $j \in J^\mu$, while all non-diagonal non-zero coefficients are equal to $q^{1/2}$. 
Now, let us consider the diagonal matrix
$$U_\mu := \sum_{k=1}^{m_\mu} q^{\ell(\pi_{\mu,k})/2}M_{k,k}.$$
The coefficients of the matrix $U_\mu\widetilde{\Psi}_\mu(E_\mu g_i)U_\mu^{-1}$ satisfy:
$$(U_\mu\widetilde{\Psi}_\mu(E_\mu g_i)U_\mu^{-1})_{k,l}  = q^{(\ell(\pi_{\mu,k})-\ell(\pi_{\mu,l}))/2}  (\widetilde{\Psi}_\mu(E_\mu g_i))_{k,l}\,,$$
for all $k,l \in \{1,\ldots,m_\mu\}$. Therefore, following the definition of $\widetilde{\Psi}_\mu$ and Deodhar's lemma, the matrix $U_\mu\widetilde{\Psi}_\mu(E_\mu g_i)U_\mu^{-1}$ is a matrix whose diagonal  coefficients are the same as the diagonal coefficients of $\widetilde{\Psi}_\mu(E_\mu g_i)$ (and thus of the form $G_j$ with $j \in J^\mu$), while all non-diagonal non-zero coefficients are equal to either $1$ or $q$.
Moreover, since,  for all $j=1,\ldots,n$,   
$$\widetilde{\Psi}_\mu(E_\mu t_j) = \sum_{k=1}^{m_\mu} \chi_k^\mu(t_j) M_{k,k}$$
is a diagonal matrix, we have $U_\mu\widetilde{\Psi}_\mu(E_\mu t_j)U_\mu^{-1}=\widetilde{\Psi}_\mu(E_\mu t_j)$. We conclude the following:

\begin{thm}\label{our iso}
Let $\mu \in {\rm Comp}_d(n)$.  The map 
$${\Psi}_\mu : E_\mu \YH \rightarrow {\rm Mat}_{m_\mu}(\mathcal{H}^\mu(q))$$ defined by
$$ {\Psi}_\mu(E_\mu a):= U_\mu\widetilde{\Psi}_\mu(E_\mu a)U_\mu^{-1},$$ for all $a \in \YH$, is an isomorphism of $\C[q,q^{-1}]$-algebras.
Its inverse is the map
$$ \Phi_\mu: {\rm Mat}_{m_\mu}(\mathcal{H}^\mu(q)) \rightarrow E_\mu \YH$$ defined by
$$ {\Phi}_\mu(A):= \widetilde{\Phi}_\mu(U_\mu^{-1} A U_\mu),$$ for all $ A \in {\rm Mat}_{m_\mu}(\mathcal{H}^\mu(q))$. As a consequence, the map
$${\Psi}_n:= \bigoplus_{\mu \in {\rm Comp}_d(n)}  {\Psi}_\mu :  \YH \rightarrow \bigoplus_{\mu \in {\rm Comp}_d(n)}  {\rm Mat}_{m_\mu}(\mathcal{H}^\mu(q))$$
is also an isomorphism of $\C[q,q^{-1}]$-algebras, with inverse map
$${\Phi}_n:= \bigoplus_{\mu \in {\rm Comp}_d(n)}  {\Phi}_\mu :   \bigoplus_{\mu \in {\rm Comp}_d(n)}  {\rm Mat}_{m_\mu}(\mathcal{H}^\mu(q)) \rightarrow  \YH.$$
\end{thm}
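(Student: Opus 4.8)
The plan is to obtain Theorem~\ref{our iso} by packaging Theorem~\ref{thm-iso-JP} with the computation carried out in the paragraph preceding the statement; the only genuine issue is to keep careful track of scalar rings. First I would observe that, \emph{over} $\C[q^{1/2},q^{-1/2}]$, the maps $\Psi_\mu$ and $\Phi_\mu$ are mutually inverse algebra isomorphisms with no extra work: $U_\mu$ is an invertible diagonal matrix whose diagonal entries $q^{\ell(\pi_{\mu,k})/2}$ are central units of $\mathrm{Mat}_{m_\mu}(\mathcal{H}^\mu(q))$ after extension of scalars, so conjugation by $U_\mu$ is an algebra automorphism of this matrix algebra, with inverse conjugation by $U_\mu^{-1}$. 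Since $\Psi_\mu$ is $\widetilde{\Psi}_\mu$ followed by conjugation by $U_\mu$, and $\Phi_\mu$ is conjugation by $U_\mu^{-1}$ followed by $\widetilde{\Phi}_\mu$, and $\widetilde{\Psi}_\mu,\widetilde{\Phi}_\mu$ are mutually inverse isomorphisms by Theorem~\ref{thm-iso-JP}, the composites $\Psi_\mu$ and $\Phi_\mu$ are mutually inverse $\C[q^{1/2},q^{-1/2}]$-algebra isomorphisms, and in particular the stated formulas define them.

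Next I would show that $\Psi_\mu$ carries the $\C[q,q^{-1}]$-form $E_\mu\YH$ into the $\C[q,q^{-1}]$-form of $\mathrm{Mat}_{m_\mu}(\mathcal{H}^\mu(q))$. As a unital $\C[q,q^{-1}]$-algebra with identity $E_\mu$, the algebra $E_\mu\YH$ is generated by the elements $E_\mu g_i$ for $1\le i\le n-1$ and $E_\mu t_j$ for $1\le j\le n$, because $\YH$ is generated by the $g_i,t_j$ and $E_\mu$ is a central idempotent. Since $\Psi_\mu$ is an algebra homomorphism, it therefore suffices to check that $\Psi_\mu(E_\mu g_i)$ and $\Psi_\mu(E_\mu t_j)$ have entries in $\mathcal{H}^\mu(q)$; but this is precisely the content of the computation preceding the theorem, where Deodhar's lemma (Lemma~\ref{Deodhar}) forces $U_\mu\widetilde{\Psi}_\mu(E_\mu g_i)U_\mu^{-1}$ to have diagonal entries among the $G_j$ with $j\in J^\mu$ and off-diagonal entries in $\{0,1,q\}$, while $U_\mu\widetilde{\Psi}_\mu(E_\mu t_j)U_\mu^{-1}=\widetilde{\Psi}_\mu(E_\mu t_j)$ is diagonal with roots-of-unity coefficients.

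Finally, for surjectivity onto the $\C[q,q^{-1}]$-form and for identifying the inverse — the one point that needs a little care, though it is short — I would use that $\C[q^{1/2},q^{-1/2}]$ is free of rank $2$ over $\C[q,q^{-1}]$ with basis $\{1,q^{1/2}\}$. Writing $A=E_\mu\YH$ and $B=\mathrm{Mat}_{m_\mu}(\mathcal{H}^\mu(q))$ for the $\C[q,q^{-1}]$-forms (both free over $\C[q,q^{-1}]$), scalar extension splits as $A\otimes\C[q^{1/2},q^{-1/2}]=A\oplus q^{1/2}A$ and similarly for $B$. Since $\Psi_\mu$ is a $\C[q^{1/2},q^{-1/2}]$-module isomorphism with $\Psi_\mu(A)\subseteq B$, hence $\Psi_\mu(q^{1/2}A)\subseteq q^{1/2}B$, projecting the surjectivity identity $\Psi_\mu(A)+q^{1/2}\Psi_\mu(A)=B\oplus q^{1/2}B$ onto the first summand gives $\Psi_\mu(A)=B$; being also injective, $\Psi_\mu|_A$ is a $\C[q,q^{-1}]$-algebra isomorphism, and its inverse is the restriction of $\Phi_\mu$, which therefore maps $B$ into $A$. (Alternatively one can verify $\Phi_\mu(B)\subseteq A$ directly, using that $\ell(w)-\ell(\pi_{\mu,k}^{-1}w\pi_{\mu,l})$ is even because $\ell\bmod 2$ is the sign character.) The global statement then follows from the decomposition $\YH=\bigoplus_{\mu\in{\rm Comp}_d(n)}E_\mu\YH$ into two-sided ideals recorded above: $\Psi_n=\bigoplus_\mu\Psi_\mu$ and $\Phi_n=\bigoplus_\mu\Phi_\mu$ are mutually inverse $\C[q,q^{-1}]$-algebra isomorphisms. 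I expect the substantive obstacle to have been the Deodhar-lemma computation on the coefficients of $U_\mu\widetilde{\Psi}_\mu(E_\mu g_i)U_\mu^{-1}$, which is already done in the excerpt, so that the proof is essentially an assembly of existing ingredients plus the elementary descent-of-scalars argument above.
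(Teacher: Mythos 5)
Your proposal is correct and follows essentially the paper's own route: the paper's proof of Theorem \ref{our iso} is precisely the preceding Deodhar-lemma computation showing that $U_\mu\widetilde{\Psi}_\mu(E_\mu g_i)U_\mu^{-1}$ and $\widetilde{\Psi}_\mu(E_\mu t_j)$ have entries in $\mathcal{H}^\mu(q)$, and your extra descent-of-scalars step (using $\C[q^{1/2},q^{-1/2}]=\C[q,q^{-1}]\oplus q^{1/2}\C[q,q^{-1}]$ to get $\Psi_\mu(E_\mu\YH)={\rm Mat}_{m_\mu}(\mathcal{H}^\mu(q))$ and hence $\Phi_\mu$ preserving the forms) is a legitimate way to make explicit what the paper leaves implicit. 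One caveat: the parenthetical alternative is misstated, since $\ell(w)-\ell(\pi_{\mu,k}^{-1}w\pi_{\mu,l})\equiv\ell(\pi_{\mu,k})+\ell(\pi_{\mu,l})\pmod 2$ need not be even (otherwise $\widetilde{\Psi}_\mu$ itself would already be defined over $\C[q,q^{-1}]$); what is even is this quantity combined with the conjugation exponent $\ell(\pi_{\mu,l})-\ell(\pi_{\mu,k})$ coming from $U_\mu$, which is what a direct check of $\Phi_\mu(B)\subseteq A$ actually requires -- but this does not affect your main argument.
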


\begin{rem}
{\rm We believe that, using a similar method, one can prove that Lusztig's isomorphism theorem for Yokonuma--Hecke algebras \cite[\S 34]{Lu}  is valid over the ring $\C[q,q^{-1}]$.}
\end{rem}

\subsection{From $\FTL$ to Temperley--Lieb}
Recall that $\FTL$ is the quotient
${\rm Y}_{d,n}(q)/I_{d,n}$, where $I_{d,n}$ is the ideal generated by the element
$e_1e_2\, g_{1,2}$ (with $I_{d,n}=\{0\}$ if $n \leq 2$). Let $\mu \in {\rm Comp}_d(n)$.  We will study the image of $e_1e_2\, g_{1,2}$ under the isomorphism $\Psi_\mu$.

By \eqref{Echi}, for all $i=1,\ldots,n-1$ and $\chi \in \Irr(\mathcal{A}_{d,n})$, we have
\begin{equation}\label{eiEchi}
e_i E_\chi = E_\chi e_i= \frac{1}{d} \sum_{s=0}^{d-1} \chi(t_i)^s \chi(t_{i+1})^{-s} E_\chi =  
\left\{ \begin{array}{cl}
E_\chi & \text{ if }  \chi(t_i) = \chi(t_{i+1}) ; \\ & \\
0  & \text{ if }  \chi(t_i) \neq \chi(t_{i+1}).
\end{array}\right.
\end{equation}
We deduce that, for all $k=1,\ldots,m_\mu$,
\begin{equation}\label{i-1}
E_{\chi_k^\mu}e_1e_2g_{1,2} = 
\left\{ \begin{array}{cl}
E_{\chi_k^\mu}g_{1,2} & \text{ if }  \chi_k^\mu(t_1)=\chi_k^\mu(t_2)=\chi_k^\mu(t_3); \\ & \\
0  & \text{ otherwise } .
\end{array}\right.
\end{equation}

\begin{prop}\label{imageofpsi}
Let $\mu \in {\rm Comp}_d(n)$ and $k \in \{1,\ldots,m_\mu\}$. We have
$$
\Psi_\mu(E_{\chi_k^\mu}e_1e_2g_{1,2}) = 
\left\{ \begin{array}{cl}
G_{i,i+1} M_{k,k} & \text{ for some } i \in  \{1,\ldots,n-2\}
 \,\text{ if }  \chi_k^\mu(t_1)=\chi_k^\mu(t_2)=\chi_k^\mu(t_3); \\ & \\
0  & \text{ otherwise } .
\end{array}\right.
$$
Thus, $\Psi_\mu(E_\mu e_1e_2g_{1,2})$ is a diagonal matrix in $ {\rm Mat}_{m_\mu}(\mathcal{H}^\mu(q))$
with all non-zero coefficients being of the form $G_{i,i+1}$ for some $i \in  \{1,\ldots,n-2\}$.
\end{prop}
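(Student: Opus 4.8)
The idea is to compute $\Psi_\mu(E_{\chi_k^\mu}e_1e_2g_{1,2})$ directly by unwinding the definitions of $\widetilde{\Psi}_\mu$ and the conjugation by $U_\mu$, using the analysis of the isomorphism from Section \ref{Young} together with \eqref{i-1}. By \eqref{i-1}, if $\chi_k^\mu(t_1),\chi_k^\mu(t_2),\chi_k^\mu(t_3)$ are not all equal then $E_{\chi_k^\mu}e_1e_2g_{1,2}=0$ and there is nothing to prove. So assume $\chi_k^\mu(t_1)=\chi_k^\mu(t_2)=\chi_k^\mu(t_3)=:\xi$; then $E_{\chi_k^\mu}e_1e_2g_{1,2}=E_{\chi_k^\mu}g_{1,2}=\sum_{w\in\langle s_1,s_2\rangle}E_{\chi_k^\mu}g_w$. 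I would apply $\Psi_\mu$ term by term to this sum of six standard basis elements, using that $\Psi_\mu(E_{\chi_k^\mu}g_w)=U_\mu\widetilde{\Psi}_\mu(E_{\chi_k^\mu}g_w)U_\mu^{-1}$.

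**Key steps.** First, since $\chi_k^\mu$ agrees on $t_1,t_2,t_3$, each $w\in\langle s_1,s_2\rangle$ fixes $\chi_k^\mu$ under the $\mathfrak{S}_n$-action, so in the defining formula for $\widetilde\Psi_\mu$ the index $l$ equals $k$ for every such $w$, and every term lands in the single diagonal block $M_{k,k}$. Hence $\Psi_\mu(E_{\chi_k^\mu}g_{1,2})=B_k\,M_{k,k}$ for some $B_k\in\mathcal H^\mu(q)$, and I must identify $B_k$. Second, write $\pi:=\pi_{\mu,k}$; for $w\in\langle s_1,s_2\rangle$ we have $\pi^{-1}w\pi\in\mathfrak S_\mu$ by Lemma \ref{Deodhar} (applied repeatedly to the generators $s_1,s_2$, which shows $\pi^{-1}s_1\pi=s_{j_1}$ and $\pi^{-1}s_2\pi=s_{j_2}$ with $j_1,j_2\in J^\mu$, and moreover $|j_1-j_2|=1$ since conjugation by $\pi$ is a length-preserving graph isomorphism of the relevant parabolic). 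So $\langle s_1,s_2\rangle$ is conjugated by $\pi$ to $\langle s_{j_1},s_{j_2}\rangle$ with $j_2=j_1+1$ (set $i:=\min(j_1,j_2)$), and the length function is preserved: $\ell(\pi^{-1}w\pi)=\ell(w)$ for all $w\in\langle s_1,s_2\rangle$. Third, with the $U_\mu$-conjugation the $(k,k)$-coefficient is multiplied by $q^{(\ell(\pi_{\mu,k})-\ell(\pi_{\mu,k}))/2}=1$, so $\Psi_\mu$ and $\widetilde\Psi_\mu$ agree on this diagonal entry, and $B_k=\sum_{w\in\langle s_1,s_2\rangle}q^{(\ell(w)-\ell(\pi^{-1}w\pi))/2}G_{\pi^{-1}w\pi}=\sum_{w\in\langle s_1,s_2\rangle}G_{\pi^{-1}w\pi}=\sum_{v\in\langle s_i,s_{i+1}\rangle}G_v=G_{i,i+1}$. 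This proves the first displayed formula. The last sentence then follows since $E_\mu e_1e_2g_{1,2}=\sum_{k=1}^{m_\mu}E_{\chi_k^\mu}e_1e_2g_{1,2}$ and $\Psi_\mu$ is additive, so $\Psi_\mu(E_\mu e_1e_2g_{1,2})=\sum_k \Psi_\mu(E_{\chi_k^\mu}e_1e_2g_{1,2})$ is diagonal with each nonzero entry of the stated form.

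**Main obstacle.** The routine part is bookkeeping with the idempotents and length function; the one genuinely delicate point is the claim that conjugation by $\pi_{\mu,k}$ sends the parabolic subgroup $\langle s_1,s_2\rangle$ to a parabolic subgroup $\langle s_i,s_{i+1}\rangle$ \emph{of the same type $A_2$ form inside $J^\mu$} — i.e. that one gets two \emph{adjacent} generators $s_j$, not $s_j$ and $s_{j'}$ with $|j-j'|>1$. Deodhar's lemma as stated only handles one generator at a time, so I would argue that $\pi^{-1}(\cdot)\pi$ restricted to the relevant parabolic is a length-preserving group isomorphism $\mathfrak S_3\xrightarrow{\sim}\langle s_{j_1},s_{j_2}\rangle$, whence the two image generators must satisfy the braid relation $s_{j_1}s_{j_2}s_{j_1}=s_{j_2}s_{j_1}s_{j_2}$, forcing $|j_1-j_2|=1$; this is where I expect to spend the most care. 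Everything else reduces to the computation $G_{i,i+1}=\sum_{v\in\langle s_i,s_{i+1}\rangle}G_v$ already recorded in the definition of ${\rm TL}_n(q)$.
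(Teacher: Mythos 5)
Your proposal is correct and follows essentially the same route as the paper's proof: reduce to the case $\chi_k^\mu(t_1)=\chi_k^\mu(t_2)=\chi_k^\mu(t_3)$ via \eqref{i-1}, observe that every $w\in\langle s_1,s_2\rangle$ fixes $\chi_k^\mu$ so all terms land in the $(k,k)$ entry with trivial $U_\mu$-conjugation factor, and use Deodhar's lemma on $s_1$ and $s_2$ together with the fact that conjugation by $\pi_{\mu,k}$ transports the non-commutation (equivalently, the braid relation) to force the two image generators to be adjacent, giving $\sum_{w}G_{\pi_{\mu,k}^{-1}w\pi_{\mu,k}}=G_{i,i+1}$. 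The only difference is cosmetic: you make explicit the cancellation of the $q^{\frac{1}{2}(\ell(w)-\ell(\pi_{\mu,k}^{-1}w\pi_{\mu,k}))}$ factors via length preservation, which the paper leaves implicit once adjacency of the conjugated generators is established.
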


\begin{proof}
If $\chi_k^\mu(t_1)=\chi_k^\mu(t_2)=\chi_k^\mu(t_3)$, then $w(\chi_k^\mu)=\chi_k^\mu$ for all
$w \in {\langle s_1,s_2\rangle} \subseteq \mathfrak{S}_n$,
and so
\begin{equation}\label{almost 1}
 \Psi_\mu(E_{\chi_k^\mu}g_{1,2}) = \sum_{w \in {\langle s_1,s_2\rangle}}\Psi_\mu(E_{\chi_k^\mu}g_w) =  
\sum_{w \in {\langle s_1,s_2\rangle}}U_\mu\widetilde{\Psi}_\mu(E_{\chi_k^\mu}g_w)U_\mu^{-1}=
\sum_{w \in{\langle s_1,s_2\rangle}} G_{\pi_{\mu,k}^{-1}w\pi_{\mu,k}} M_{k,k}  . 
\end{equation}
We will show that there exists $i \in \{1,\ldots,n-2\}$ such that
$$ \sum_{w \in{\langle s_1,s_2\rangle}} G_{\pi_{\mu,k}^{-1}w\pi_{\mu,k}} = G_{i,i+1}.$$

By Lemma \ref{Deodhar}, there exist $i,j \in J^\mu$ such that
$$ \pi_{\mu,k}^{-1}s_1\pi_{\mu,k} =s_i \quad \text{and} \quad \pi_{\mu,k}^{-1}s_2\pi_{\mu,k} =s_j.$$
Consequently,
$ \pi_{\mu,k}^{-1}s_1s_2\pi_{\mu,k} =s_is_j$,  $\pi_{\mu,k}^{-1}s_2s_1\pi_{\mu,k} =s_js_i$ and
$ \pi_{\mu,k}^{-1}s_1s_2s_1\pi_{\mu,k} =s_is_js_i$. Moreover, since $s_1$ and $s_2$ do not commute, $s_i$ and $s_j$ do not commute either, so we must have
$j \in \{i-1,i+1\}$. Hence, if $j=i-1$, then
$$\sum_{w \in{\langle s_1,s_2\rangle}} G_{\pi_{\mu,k}^{-1}w\pi_{\mu,k}} = G_{i-1,i},$$
while if $j=i+1$, then
$$\sum_{w \in{\langle s_1,s_2\rangle}} G_{\pi_{\mu,k}^{-1}w\pi_{\mu,k}} = G_{i,i+1}.$$
We conclude that there exists $i \in \{1,\ldots,n-2\}$ such that
$$ \sum_{w \in{\langle s_1,s_2\rangle}} G_{\pi_{\mu,k}^{-1}w\pi_{\mu,k}} = G_{i,i+1},$$
whence we deduce that
$$
 \Psi_\mu(E_{\chi_k^\mu}g_{1,2}) = G_{i,i+1} M_{k,k}.
$$
Combining this with \eqref{i-1} yields the desired result.
\end{proof}

\begin{exmp}\label{example1}{\rm Let us consider the case $d=2$ and $n=4$. We have 
$$(\mu,m_\mu) \in \{ ((4,0),1), ((3,1),4), ((2,2),6), ((1,3),4), ((0,4),1) \}.$$
Then
$$
\Psi_\mu(E_{\chi_k^\mu}e_1e_2g_{1,2}) = 
\left\{ \begin{array}{cl}
G_{1,2} & \text{ if $\mu=(4,0)$ or $\mu=(0,4)$ }, \\ & \\
G_{1,2}\,M_{1,1}  & \text{ if $\mu=(3,1)$ and $k=1$ }, \\  & \\
G_{2,3}\,M_{4,4}  & \text{ if $\mu=(1,3)$ and $k=4$ }, \\  & \\
0  & \text{ otherwise } ,
\end{array}\right.$$
where we take $\pi_{(1,3),4}=s_3s_2s_1$. }
\end{exmp}

Now, recall the surjective   $\C[q,q^{-1}]$-algebra homomorphism $\rho^\mu : \mathcal{H}^\mu(q)\twoheadrightarrow {\rm TL}^{\mu}(q)$ defined in \S\ref{Young}. The map $\rho^{\mu}$ induces a surjective $\C[q,q^{-1}]$-algebra homomorphism
${\rm Mat}_{m_\mu}(\mathcal{H}^\mu(q))\twoheadrightarrow {\rm Mat}_{m_\mu}({\rm TL}^{\mu}(q))$, which we also denote by $\rho^\mu$. We obtain that
$$ \rho^{\mu}\circ \Psi_\mu : E_\mu \YH \rightarrow {\rm Mat}_{m_\mu}({\rm TL}^{\mu}(q))$$
  is a surjective $\C[q,q^{-1}]$-algebra homomorphism.

In order for $\rho^{\mu}\circ \Psi_\mu$ to factor through $E_\mu \YH/ E_\mu I_{d,n} \cong E_\mu \FTL$, all elements of $E_\mu I_{d,n}$ have to belong to the kernel of $\rho^{\mu}\circ \Psi_\mu$. Since $I_{d,n}$ is the ideal generated by the element
$e_1e_2 g_{1,2}$, it is enough to show that $(\rho^{\mu}\circ \Psi_\mu)(e_1e_2 g_{1,2}) =0$. This is immediate by Proposition 
\ref{imageofpsi}. Hence, if we denote by $\theta^{\mu}$ the natural surjection
 $E_\mu \YH \twoheadrightarrow E_\mu \YH/ E_\mu I_{d,n} \cong E_\mu \FTL$, there exists
 a unique $\C[q,q^{-1}]$-algebra homomorphism $\psi_\mu : E_\mu \FTL \rightarrow {\rm Mat}_{m_\mu}({\rm TL}^{\mu}(q))$ such that
 the following diagram is commutative:
 \begin{equation}\label{diag1} \,\,\,\, \,\,\,\, \,\,\,\, \,\,\,\, \,\,\,\,  \,\,\,\, \,\,\,\, 
 \diagram E_\mu \YH \dto^{\theta^{\mu}}&\rto^{\Psi_\mu}& &
          {\rm Mat}_{m_\mu}(\mathcal{H}^\mu(q)) \dto^{\rho^{\mu}}\\
            E_\mu \FTL & \rto^{\psi_\mu} & & {\rm Mat}_{m_\mu}({\rm TL}^{\mu}(q)) & & \enddiagram 
            \end{equation}
Since $\rho^{\mu}\circ \Psi_\mu$ is surjective, $\psi_\mu$ is also surjective.

\subsection{From Temperley--Lieb to $\FTL$} We now consider the surjective 
$\C[q,q^{-1}]$-algebra homomorphism:
$$ \theta^{\mu}\circ \Phi_\mu : {\rm Mat}_{m_\mu}(\mathcal{H}^\mu(q)) \rightarrow { E_\mu \FTL},$$
where $\Phi_\mu$ is the inverse of $\Psi_\mu$.
In order for $ \theta^{\mu}\circ \Phi_\mu$ to factor through ${\rm Mat}_{m_\mu}({\rm TL}^{\mu}(q))$, we have to show
that $G_{i,i+1} M_{k,l}$ belongs to the kernel of  $\theta^{\mu}\circ \Phi_\mu$
for all $i=1,\ldots,n-2$ such that $G_{i,i+1} \in \mathcal{H}^\mu(q)$ (that is, $\{i,i+1\} \subseteq J^{\mu}$)
and for all $k,l \in \{1,\ldots,m_\mu\}$.
Since $$G_{i,i+1} M_{k,l} = M_{k,1} G_{i,i+1} M_{1,1} M_{1,l}$$
and $\theta^{\mu}\circ \Phi_\mu$ is an homomorphism of $\C[q,q^{-1}]$-algebras, it is enough to show that
 $(\theta^{\mu}\circ \Phi_\mu) ( G_{i,i+1} M_{1,1}) =0$.
 
 Let $i=1,\ldots,n-2$ such that $G_{i,i+1} \in \mathcal{H}^\mu(q)$.
 By definition of $\Phi_\mu$, and since $\pi_{\mu,1}=1$, we have
 \begin{equation}\label{almost 2}
 \Phi_\mu(G_{i,i+1} M_{1,1}) =  \widetilde{\Phi}_\mu(U_\mu^{-1} G_{i,i+1} M_{1,1}U_\mu) =\widetilde{\Phi}_\mu(G_{i,i+1} M_{1,1}) =  E_{\chi_1^\mu}g_{i,i+1} E_{\chi_1^\mu}.
 \end{equation} 
Now, since $G_{i,i+1} \in \mathcal{H}^\mu(q)$, there exists $j \in \{1,\ldots,d\}$ such that
$\mu_j>2$ and $G_{i,i+1} \in  \mathcal{H}_{\mu_j}(q)$, that is,
$i \in \{ \mu_1+\cdots+\mu_{j-1}+1,\ldots, \mu_1+\cdots+\mu_{j-1}+\mu_j-2\}$. By definition of $\chi_1^\mu$, we have
$$ \chi_1^\mu(t_{ \mu_1+\cdots+\mu_{j-1}+1} ) = \cdots = \chi_1^\mu(t_{ \mu_1+\cdots+\mu_{j-1}+\mu_j} ) =\xi_j,$$
whence
$$ \chi_1^\mu(t_i) = \chi_1^\mu(t_{i+1})=\chi_1^\mu(t_{i+2}) = \xi_j.$$
Following \eqref{eiEchi}, we obtain
$$\Phi_\mu(G_{i,i+1} M_{1,1}) =   E_{\chi_1^\mu}g_{i,i+1} E_{\chi_1^\mu} = E_{\chi_1^\mu}e_i e_{i+1} g_{i,i+1} E_{\chi_1^\mu}.$$
Since $e_i e_{i+1} g_{i,i+1}  \in I_{d,n}$, we deduce that
$(\theta^{\mu}\circ \Phi_\mu) ( G_{i,i+1} M_{1,1}) =0$, as desired.

We conclude that there exists
 a unique $\C[q,q^{-1}]$-algebra homomorphism $\phi_\mu :  {\rm Mat}_{m_\mu}({\rm TL}^{\mu}(q)) \rightarrow E_\mu \FTL $ such that the following diagram is commutative:
 \begin{equation}\label{diag2} \,\,\,\, \,\,\,\, \,\,\,\, \,\,\,\, \,\,\,\,  \,\,\,\, \,\,\,\, 
 \diagram E_\mu \YH \dto^{\theta^{\mu}}& & \lto_{\Phi_\mu}&
          {\rm Mat}_{m_\mu}(\mathcal{H}^\mu(q)) \dto^{\rho^{\mu}}\\
            E_\mu \FTL & & \lto_{\phi_\mu}  & {\rm Mat}_{m_\mu}({\rm TL}^{\mu}(q)) & & \enddiagram 
            \end{equation}
Since $\theta^{\mu}\circ \Phi_\mu$ is surjective, $\phi_\mu$ is also surjective.

\subsection{An isomorphism theorem for the  Framisation of the Temperley--Lieb algebra $\FTL$}

We are now ready to prove the main result of this section.

\begin{thm} \label{thm-iso-CP}
Let $\mu \in {\rm Comp}_d(n)$.  The linear map $\psi_\mu$
is an isomorphism of $\C[q,q^{-1}]$-algebras with inverse map $\phi_\mu$. As a consequence, the map
$$\psi_n:= \bigoplus_{\mu \in {\rm Comp}_d(n)}  \psi_\mu :  \FTL \rightarrow \bigoplus_{\mu \in {\rm Comp}_d(n)}  {\rm Mat}_{m_\mu}({\rm TL}^{\mu}(q))$$
is also an isomorphism of $\C[q,q^{-1}]$-algebras, with inverse map
$$\phi_n:= \bigoplus_{\mu \in {\rm Comp}_d(n)}  \phi_\mu :   \bigoplus_{\mu \in {\rm Comp}_d(n)}  {\rm Mat}_{m_\mu}({\rm TL}^{\mu}(q)) \rightarrow  \FTL.$$
\end{thm}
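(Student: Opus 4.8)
The plan is to show that $\psi_\mu$ and $\phi_\mu$ are mutually inverse; since both are already known to be $\C[q,q^{-1}]$-algebra homomorphisms (indeed surjective ones), this will immediately give that each is an isomorphism with the other as inverse. The only ingredients needed are the two commutative diagrams \eqref{diag1} and \eqref{diag2}, the fact from Theorem \ref{our iso} that $\Psi_\mu$ and $\Phi_\mu$ are mutually inverse, and the surjectivity of the quotient maps $\theta^\mu \colon E_\mu\YH \twoheadrightarrow E_\mu\FTL$ and $\rho^\mu \colon {\rm Mat}_{m_\mu}(\mathcal{H}^\mu(q)) \twoheadrightarrow {\rm Mat}_{m_\mu}({\rm TL}^\mu(q))$.

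First I would check $\phi_\mu \circ \psi_\mu = \mathrm{id}_{E_\mu\FTL}$. Because $\theta^\mu$ is surjective, it is enough to verify the identity after precomposing with $\theta^\mu$. Using \eqref{diag1} to rewrite $\psi_\mu \circ \theta^\mu = \rho^\mu \circ \Psi_\mu$, then \eqref{diag2} to rewrite $\phi_\mu \circ \rho^\mu = \theta^\mu \circ \Phi_\mu$, and finally $\Phi_\mu \circ \Psi_\mu = \mathrm{id}$, one gets
$$\phi_\mu \circ \psi_\mu \circ \theta^\mu = \phi_\mu \circ \rho^\mu \circ \Psi_\mu = \theta^\mu \circ \Phi_\mu \circ \Psi_\mu = \theta^\mu.$$
Symmetrically, to check $\psi_\mu \circ \phi_\mu = \mathrm{id}$ I would precompose with the surjection $\rho^\mu$: using \eqref{diag2}, then \eqref{diag1}, then $\Psi_\mu \circ \Phi_\mu = \mathrm{id}$,
$$\psi_\mu \circ \phi_\mu \circ \rho^\mu = \psi_\mu \circ \theta^\mu \circ \Phi_\mu = \rho^\mu \circ \Psi_\mu \circ \Phi_\mu = \rho^\mu.$$
This establishes that $\psi_\mu$ is an isomorphism of $\C[q,q^{-1}]$-algebras with inverse $\phi_\mu$.

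For the global statement, I would observe that $\{E_\mu \mid \mu \in {\rm Comp}_d(n)\}$ is a complete set of central orthogonal idempotents in $\YH$, hence their images in the quotient $\FTL$ form a complete set of central orthogonal idempotents, giving the decomposition $\FTL = \bigoplus_{\mu} E_\mu\FTL$ into two-sided ideals, compatible with the decomposition of $\YH$ used to define $\Psi_n$. Taking $\psi_n := \bigoplus_\mu \psi_\mu$ and $\phi_n := \bigoplus_\mu \phi_\mu$ and applying the component-wise result then yields the two isomorphisms $\psi_n$ and $\phi_n$. I do not expect any real obstacle here: the substantive work was already carried out in constructing $\psi_\mu$ and $\phi_\mu$ (and in Theorem \ref{our iso}), and what remains is a diagram chase. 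The one point that must be handled with care is that the two verifications genuinely rely on the surjectivity of $\theta^\mu$ and $\rho^\mu$, not merely on commutativity of the squares — so I would state explicitly at each step that the identity is being checked on the image of the relevant quotient map.
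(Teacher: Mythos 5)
Your proof is correct and follows essentially the same route as the paper: the same diagram chase using the commutativity of \eqref{diag1} and \eqref{diag2}, the identities $\Psi_\mu\circ\Phi_\mu=\mathrm{id}$ and $\Phi_\mu\circ\Psi_\mu=\mathrm{id}$ from Theorem \ref{our iso}, and cancellation of the surjections $\theta^\mu$ and $\rho^\mu$. The only (harmless) addition is your explicit remark on the idempotent decomposition of $\FTL$ for the global statement, which the paper leaves implicit.
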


\begin{proof}
Since the diagrams \eqref{diag1} and \eqref{diag2} are commutative, we have
$$ \rho^{\mu}\circ \Psi_\mu = \psi_\mu \circ \theta^{\mu} \quad \text{and} \quad \theta^{\mu}\circ \Phi_\mu = \phi_\mu \circ \rho^{\mu}.$$
This implies that
$$ \rho^{\mu}\circ \Psi_\mu \circ \Phi_\mu = \psi_\mu \circ \phi_\mu \circ \rho^{\mu}   \quad \text{and} \quad 
\theta^{\mu}\circ \Phi_\mu \circ \Psi_\mu = \phi_\mu \circ \psi_\mu \circ \theta^{\mu}.$$
By Theorem \ref{our iso}, $\Psi_\mu \circ \Phi_\mu = {\rm id}_{ {\rm Mat}_{m_\mu}(\mathcal{H}^\mu(q))}$
and $\Phi_\mu \circ \Psi_\mu = {\rm id}_{E_\mu \YH}$, whence
$$ \rho^{\mu} = \psi_\mu \circ \phi_\mu \circ \rho^{\mu}   \quad \text{and} \quad 
\theta^{\mu} = \phi_\mu \circ \psi_\mu \circ \theta^{\mu}.$$
Since the maps $\rho^{\mu}$ and $ \theta^{\mu}$ are surjective, we obtain
$$\psi_\mu \circ \phi_\mu = {\rm id}_{ {\rm Mat}_{m_\mu}({\rm TL}^\mu(q))} \quad \text{and} \quad 
\phi_\mu \circ \psi_\mu = {\rm id}_{E_\mu \FTL} ,$$
as desired.
\end{proof}

\subsection{A basis for the Framisation of the Temperley--Lieb algebra $\FTL$}\label{subs-basis}
Let $n \in \N$.
Let $\underline{i}=(i_1,\ldots,i_p)$ and $\underline{k}=(k_1, \ldots k_p)$ be two $p$-tuplets of non-negative integers, with $0 \leq p \leq n-1$.
We denote by $\mathfrak{H}_n$ the set of pairs $(\underline{i},\underline{k})$ such that
$$1\leq i_1 < i_2< \cdots < i_p \leq n-1 \,\,\,\,\,\text{and}\,\,\,\,\, i_j - k_j > 0\,\,\,\,\,\forall\,j=1,\ldots,p.$$
For $(\underline{i},\underline{k}) \in \mathfrak{H}_n$, we set
$$G_{\ul{i},\ul{k}}:=(G_{i_1}G_{i_1-1}\ldots G_{i_1-k_1})(G_{i_2}G_{i_2-1}\ldots G_{i_2-k_2})\ldots (G_{i_p}G_{i_p-1}\ldots G_{i_p-k_p})\in \mathcal{H}_n(q).$$ 
We take $G_{\emptyset,\emptyset}$ to be equal to $1$.
We have that the set
$$\mathcal{B}_{\mathcal{H}_n(q)} := \{G_{\ul{i},\ul{k}} \,|\, (\underline{i},\underline{k}) \in \mathfrak{H}_n\} = \{G_w\,|\, w \in \mathfrak{S}_n\}$$
is the standard basis of $\mathcal{H}_n(q)$ as a $\C[q,q^{-1}]$-module.

Now, let  us denote by $\mathfrak{T}_n$ the subset of $\mathfrak{H}_n$ consisting of the pairs $(\underline{i},\underline{k})$ such that
$$1\leq i_1 < i_2< \cdots < i_p \leq n-1 \,\,\,\,\,\text{and}\,\,\,\,\, 1\leq i_1-k_1 < i_2-k_2< \cdots < i_p-k_p \leq n-1.$$
Jones \cite{jo1} has shown that the set
$$\mathcal{B}_{{\rm TL}_n(q)} := \{G_{\ul{i},\ul{k}} \,|\, (\underline{i},\underline{k}) \in \mathfrak{T}_n\}$$
is a basis of ${\rm TL}_n(q)$ as a $\C[q,q^{-1}]$-module. We have $|\mathcal{B}_{{\rm TL}_n(q)} |=C_n$. 
By Theorem \ref{thm-iso-CP}, we obtain the following basis for $\FTL$:

\begin{prop}\label{FTLbasis}
The set
$$ \left\{  \phi_\mu (b_1b_2\ldots b_d\, M_{k,l}) \,|\, \mu \in {\rm Comp}_d(n), b_i \in \mathcal{B}_{{\rm TL}_{\mu_i}(q)}  \text{ for all }
i=1,\ldots,d, 1 \leq k,l \leq m_\mu     \right\}$$
is a basis of \,$\FTL$ as a $\C[q,q^{-1}]$-module. 
In particular, $\FTL$ is a free $\C[q,q^{-1}]$-module of rank 
$$\sum_{\mu \in {\rm Comp}_d(n)}m_\mu^2\, C_{\mu_1}C_{\mu_2} \cdots C_{\mu_d}.$$
\end{prop}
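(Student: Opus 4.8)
The plan is to derive the statement entirely from the isomorphism $\phi_n$ of Theorem \ref{thm-iso-CP}, combined with the standard behaviour of free $\C[q,q^{-1}]$-bases under tensor products, passage to matrix algebras, direct sums, and module isomorphisms; the genuine work has already been done in proving Theorem \ref{thm-iso-CP}, and what remains is bookkeeping.

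First I would recall that, by Jones's result quoted in \S\ref{subs-basis}, for every $m\in\N$ the set $\mathcal{B}_{{\rm TL}_m(q)}$ is a $\C[q,q^{-1}]$-basis of ${\rm TL}_m(q)$ with $|\mathcal{B}_{{\rm TL}_m(q)}| = C_m$ (for $m\le 1$ this reads $\mathcal{B}_{{\rm TL}_m(q)}=\{1\}$ and $C_m=1$). Since a tensor product over $\C[q,q^{-1}]$ of free modules is free, with basis the elementary tensors of basis elements, it follows that for each $\mu\in{\rm Comp}_d(n)$ the family
$$\left\{\, b_1 b_2\cdots b_d \;\middle|\; b_i\in\mathcal{B}_{{\rm TL}_{\mu_i}(q)}\ \text{for all}\ i=1,\ldots,d \,\right\},$$
where $b_1 b_2\cdots b_d$ denotes the elementary tensor $b_1\otimes\cdots\otimes b_d$, is a $\C[q,q^{-1}]$-basis of ${\rm TL}^{\mu}(q) = {\rm TL}_{\mu_1}(q)\otimes\cdots\otimes{\rm TL}_{\mu_d}(q)$, of cardinality $C_{\mu_1}C_{\mu_2}\cdots C_{\mu_d}$. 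Next, for any $\C[q,q^{-1}]$-algebra $A$ free as a $\C[q,q^{-1}]$-module with basis $\mathcal{B}$, the matrix algebra ${\rm Mat}_m(A)$ is free as a $\C[q,q^{-1}]$-module with basis $\{\, b\,M_{k,l} \mid b\in\mathcal{B},\ 1\le k,l\le m \,\}$, since every matrix is uniquely such a linear combination. Applying this with $A = {\rm TL}^{\mu}(q)$ and $m = m_\mu$ shows that
$$\left\{\, b_1 b_2\cdots b_d\, M_{k,l} \;\middle|\; b_i\in\mathcal{B}_{{\rm TL}_{\mu_i}(q)},\ 1\le k,l\le m_\mu \,\right\}$$
is a $\C[q,q^{-1}]$-basis of ${\rm Mat}_{m_\mu}({\rm TL}^{\mu}(q))$, of cardinality $m_\mu^2\,C_{\mu_1}\cdots C_{\mu_d}$. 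Taking the disjoint union over $\mu\in{\rm Comp}_d(n)$ then yields a $\C[q,q^{-1}]$-basis of the direct sum $\bigoplus_{\mu\in{\rm Comp}_d(n)}{\rm Mat}_{m_\mu}({\rm TL}^{\mu}(q))$.

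Finally, by Theorem \ref{thm-iso-CP} the map $\phi_n = \bigoplus_\mu \phi_\mu$ is an isomorphism of $\C[q,q^{-1}]$-algebras onto $\FTL$, hence in particular an isomorphism of $\C[q,q^{-1}]$-modules, and it restricts to $\phi_\mu$ on the summand indexed by $\mu$ (with image $E_\mu\FTL$). A module isomorphism carries a basis to a basis, so applying $\phi_n$ to the basis constructed above produces precisely the set
$$\left\{\, \phi_\mu(b_1 b_2\cdots b_d\, M_{k,l}) \;\middle|\; \mu\in{\rm Comp}_d(n),\ b_i\in\mathcal{B}_{{\rm TL}_{\mu_i}(q)}\ \text{for all}\ i=1,\ldots,d,\ 1\le k,l\le m_\mu \,\right\},$$
which is therefore a $\C[q,q^{-1}]$-basis of $\FTL$; counting its elements gives the rank $\sum_{\mu\in{\rm Comp}_d(n)} m_\mu^2\, C_{\mu_1}C_{\mu_2}\cdots C_{\mu_d}$. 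The only points requiring care are the routine facts that a basis passes to elementary tensors, to matrix units times basis elements, to disjoint unions over direct summands, and through module isomorphisms, together with the normalisation $|\mathcal{B}_{{\rm TL}_{\mu_i}(q)}| = C_{\mu_i}$; there is no substantive obstacle beyond Theorem \ref{thm-iso-CP} itself.
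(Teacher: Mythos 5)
Your proposal is correct and follows exactly the route the paper intends: the paper derives Proposition \ref{FTLbasis} directly from Theorem \ref{thm-iso-CP}, with the same implicit bookkeeping (Jones's basis of ${\rm TL}_{\mu_i}(q)$, elementary tensors for ${\rm TL}^{\mu}(q)$, matrix units, direct sum over $\mu$, and transport of the basis through the module isomorphism $\phi_n$). You have simply made explicit the routine steps the paper leaves unstated.
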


\begin{rem}{\rm Theorem \ref{res2} is a consequence of Proposition \ref{FTLbasis}, but we decided to keep the proof that uses the irreducible representations of $\C(q)\FTL$.}
\end{rem}

\section{Representation theory and an isomorphism theorem for the Complex Reflection Temperley--Lieb algebra}

In this section, we determine the irreducible representations and calculate the dimension of the Complex Reflection Temperley--Lieb algebra, which is also defined as a quotient of the Yokonuma--Hecke algebra of type $A$ \cite{gjkl2}. 
We then prove an isomorphism theorem similar to Theorem \ref{thm-iso-CP} and produce a basis for the Complex Reflection Temperley--Lieb algebra.
Our results here reinforce the opinion that the Framisation of the Temperley--Lieb algebra is the most natural analogue of the Temperley--Lieb algebra in this case.

\subsection{The Complex Reflection Temperley--Lieb algebra ${\rm CTL}_{d,n}(q)$} 
Let $j=1,\ldots,n$. We set
$$T_j := \frac{1}{d} \sum_{s=0}^{d-1}t_j^s .$$
We define the \emph{Complex Reflection Temperley--Lieb algebra} to be the quotient 
${\rm Y}_{d,n}(q)/\mathfrak{I}_{d,n}$, where $\mathfrak{I}_{d,n}$ is the ideal generated by the element
$T_1 e_1e_2\, g_{1,2}$  (if $n \leq 2$, we take $\mathfrak{I}_{d,n} = \{0\}$). 
The element $T_1$ commutes with $e_1e_2\, g_{1,2}$, since, for all $s=0,1,\ldots,d-1$, we have
$$ t_1^s e_1e_2\, g_{1,2} = t_1^s g_{1,2} e_1e_2 =
(t_1^s+ g_1t_2^s + g_2 t_1^s + g_1g_2t_3^s+g_2g_1t_2^s+ g_1g_2g_1t_3^s)e_1e_2$$
and
$t_1^s e_1 e_2 = t_2^s e_1 e_2 = t_3^s e_1 e_2 = e_1e_2 t_1^s\,$ due to Equation \eqref{eiti}. 
Further, we have
$ T_i  e_ie_{i+1}g_{i,i+1} \in \mathfrak{I}_{d,n}$  for all $i=1,\ldots,n-2$, since
$$T_i e_ie_{i+1}g_{i,i+1} = (g_1g_2 \ldots g_{n-1})^{i-1} \, T_1 e_1e_2 \,g_{1,2}\, (g_1g_2 \ldots g_{n-1})^{-(i-1)}.$$

\begin{rem}{\rm The ideal $\mathfrak{I}_{d,n}$ is the ideal generated by the element
$\sum_{0\leq a,b,c \leq d-1} t_1^a t_2^b t_3 ^c\,g_{1,2}$, which is the sum of all standard basis elements of ${\rm Y}_{d,3}(q)$.}
\end{rem}

\begin{rem}{\rm
For $d=1$, the Complex Reflection Temperley--Lieb algebra ${\rm CTL}_{1,n}(q)$ coincides with the classical Temperley--Lieb algebra $\mathrm{TL}_n(q)$.}
\end{rem}

\subsection{Irreducible representations of $\C(q){\rm CTL}_{d,n}(q)$}
Since the algebra $\C(q){\rm Y}_{d,n}(q)$ is semisimple, the algebra
$\C(q){\rm CTL}_{d,n}(q):=\C(q) \otimes_{\C[q,q^{-1}]}{\rm CTL}_{d,n}(q)$ is also semisimple. Moreover, we  have that the irreducible representations of $\C(q){\rm CTL}_{d,n}(q)$ are 
precisely the irreducible representations of $\C(q){\rm Y}_{d,n}(q)$ that pass to the quotient. That is, given $\blambda \in \mathcal{P}(d,n)$,
$V_{\blambda}$ is an irreducible representation of $\C(q){\rm CTL}_{d,n}(q)$ if and only if
$T_1e_1 e_2 g_{1,2} (\bv_{_{\cT}}) =0$ for every standard $d$-tableau ${\cT}$ of shape $\blambda$. 

Let $\blambda \in \mathcal{P}(d,n)$ and let $\cT$ be a standard $d$-tableau ${\cT}$ of shape $\blambda$.
Let $\{\xi_1,\dots,\xi_d\}$ be the set of all $d$-th roots of unity, ordered so that $\xi_1=1$. Recall that, following Theorem
\ref{thm-rep}, the action of the generators 
$t_1,t_2,\ldots,t_n$ on the basis element $\bv_{_{\cT}}$ of $V_{\blambda}$ is defined as follows:
$$t_j(\bv_{_{\cT}})=\xi_{\pos_j}\bv_{_{\cT}} \quad \text{ for } j=1,\dots,n,$$
where $\pos_j:=\pos(\cT|j)$. We deduce that, for $j=1,\dots,n$, we have
\begin{equation}\label{T_j}
T_j(\bv_{_{\cT}}) = \left\{
\begin{array}{ll}
 \bv_{_{\cT}} & \text{if }  \pos_{j} = 1 ; \\
 0 & \text{if }  \pos_{j} \neq 1.
 \end{array}\right.
\end{equation}

\begin{thm}\label{rep-ctl}
We have that $V_{\blambda}$ is an irreducible representation of $\C(q){\rm CTL}_{d,n}(q)$ if and only if 
the Young diagram of $\blambda^{(1)}$ has at most two columns.
\end{thm}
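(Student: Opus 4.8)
The plan is to mirror, for the position $1$ component $\blambda^{(1)}$, the argument already used in Theorem \ref{res1}, but now taking into account the extra factor $T_1$, whose action is recorded in \eqref{T_j}: $T_1$ is the projector onto the span of those basis vectors $\bv_{_{\cT}}$ with $\pos_1=1$. So the relevant operator $T_1 e_1 e_2 g_{1,2}$ kills $\bv_{_{\cT}}$ unless $\pos_1=\pos_2=\pos_3=1$ (using \eqref{eirep} for $e_1e_2$ and \eqref{T_j} for $T_1$), in which case it acts as $g_{1,2}$, which is the image of $G_{1,2}$ acting on the classical module $\rv_{_{\cT^{(1)}}}$ for $\mathcal{H}_{n_1}(q)$ where $n_1 := |\blambda^{(1)}|$.

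For the "only if" direction, I would assume $V_{\blambda}$ passes to the quotient and show $\blambda^{(1)}$ has at most two columns. If $n_1 \leq 2$ this is automatic. If $n_1 \geq 3$, I range over all standard $d$-tableaux $\cT$ of shape $\blambda$ with $1,2,\ldots,n_1$ placed in $\cT^{(1)}$ (so that $\pos_1 = \cdots = \pos_{n_1} = 1$, using that $\xi_1 = 1$ forces position $1$ to be the one detected by $T_1$). For such $\cT$, since $T_1$, $e_1$, $e_2$ all act as the identity on $\bv_{_{\cT}}$ and commute appropriately, $0 = T_1 e_1 e_2 g_{1,2}(\bv_{_{\cT}}) = g_{1,2}(\bv_{_{\cT}}) = G_{1,2}(\rv_{_{\cT^{(1)}}})$. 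As $\cT^{(1)}$ runs over all standard tableaux of shape $\blambda^{(1)}$, Proposition \ref{classical case} gives that $\blambda^{(1)}$ has at most two columns.

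For the "if" direction, assume $\blambda^{(1)}$ has at most two columns and let $\cT$ be an arbitrary standard $d$-tableau of shape $\blambda$. If $\pos_1 = \pos_2 = \pos_3 = 1$, then by \eqref{eirep} and \eqref{T_j} the element $T_1 e_1 e_2 g_{1,2}$ acts on $\bv_{_{\cT}}$ exactly as $g_{1,2}$ does, which in turn acts as $G_{1,2}$ on the corresponding basis vector of the $\mathcal{H}_{n_1}(q)$-module $V_{\blambda^{(1)}}$ (relabelling the entries $>3$ inside $\blambda^{(1)}$ appropriately); by Proposition \ref{classical case} this is $0$. Otherwise at least one of the three conditions $\pos_1=1$, $\pos_1=\pos_2$, $\pos_2=\pos_3$ fails; in every such case one of the factors $T_1$, $e_1$, $e_2$ annihilates $\bv_{_{\cT}}$, and since these three operators commute with one another and with $g_{1,2}$ (commutation of $T_1$ with $e_1e_2 g_{1,2}$ is exactly the computation done when defining ${\rm CTL}_{d,n}(q)$, and $e_1e_2$ commutes with $g_{1,2}$ by \eqref{eiti}), we again get $T_1 e_1 e_2 g_{1,2}(\bv_{_{\cT}}) = 0$. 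Hence $V_{\blambda}$ passes to the quotient.

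The only genuinely delicate point is bookkeeping in the "otherwise" case: one must be sure that whenever $\pos_1 = \pos_2 = \pos_3$ but this common value is not $1$, the factor $T_1$ (rather than $e_1 e_2$) is what does the killing — this is where the normalisation $\xi_1 = 1$ is used — and that the commutation relations genuinely allow moving the annihilating factor to the front; both are immediate from \eqref{eirep}, \eqref{T_j}, \eqref{eiti} and the computation in the definition of ${\rm CTL}_{d,n}(q)$, so there is no real obstacle. The structure is otherwise identical to the proof of Theorem \ref{res1}.
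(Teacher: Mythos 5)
Your proposal is correct and follows essentially the same route as the paper's proof: rewrite $T_1e_1e_2g_{1,2}(\bv_{_{\cT}})$ as $g_{1,2}e_1e_2T_1(\bv_{_{\cT}})$, use \eqref{T_j} and \eqref{eirep} to see it vanishes unless $\pos_1=\pos_2=\pos_3=1$, and in that case identify the action of $g_{1,2}$ with that of $G_{1,2}$ on $\rv_{_{\cT^{(1)}}}$ and invoke Proposition \ref{classical case}. The paper merely states the two directions together in one ``if and only if'' step, whereas you spell them out separately as in Theorem \ref{res1}; there is no substantive difference.
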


\begin{proof}
We have that 
$V_{\blambda}$ is an irreducible representation of $\C(q){\rm CTL}_{d,n}(q)$ if and only if
$T_1e_1 e_2 g_{1,2} (\bv_{_{\cT}}) =0$ for every standard $d$-tableau ${\cT}=(\cT^{(1)},\ldots,\cT^{(d)})$ of shape $\blambda$. 
Combining Equation \eqref{T_j} with Equation  (\ref{eirep}) yields:
$$T_1e_1 e_2 g_{1,2} (\bv_{_{\cT}}) =  g_{1,2}  e_1 e_2 T_1(\bv_{_{\cT}}) 
\left\{
\begin{array}{ll}
 g_{1,2} (\bv_{_{\cT}}) & \text{if }  \pos_{1} = \pos_{2}=\pos_{3}=1 ; \\
 0 & \text{otherwise. }  
 \end{array}\right.
$$

Now, if $ \pos_{1} = \pos_{2}=\pos_{3}=1$, then
$g_{1,2}$ acts on $\bv_{_{\cT}}$ in the same way that $G_{1,2}$ acts on $\rv_{_{{\cT}^{(1)}}}$ (replacing the entries greater than $3$ by entries in $\{4,\ldots,|\blambda^{(1)}|\}$).  
Following Proposition \ref{classical case},
we have $g_{1,2} (\bv_{_{\cT}}) = 0$ if and only if the Young diagram of $\blambda^{(1)}$ has at most two columns, as desired.
\end{proof}

\begin{rem}{\rm If the roots of unity $\{\xi_1,\dots,\xi_d\}$ are ordered so that $\xi_i=1$ for some $i>1$, then the above proposition holds for $\blambda^{(i)}$ in the place of $\blambda^{(1)}$.}
\end{rem}

\subsection{The dimension of $\C(q){\rm CTL}_{d,n}(q)$} We will now use the complete description of the irreducible representations of $\C(q){\rm CTL}_{d,n}(q)$ by
Theorem \ref{rep-ctl} to obtain a dimension formula for $\C(q){\rm CTL}_{d,n}(q)$. 

\begin{thm} We have
$${\rm dim}_{\C(q)}(\C(q){\rm CTL}_{d,n}(q)) = \sum_{k=0}^n  \binom{n}{k}^2  C_k\, (d-1)^{n-k} (n-k)! .$$
\end{thm}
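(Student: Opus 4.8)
The plan is to mimic the structure of the proof of Theorem \ref{res2} exactly, since the representation-theoretic setup is identical: because $\C(q){\rm CTL}_{d,n}(q)$ is semisimple, its dimension is the sum of the squares of the dimensions of its irreducible representations, which by Theorem \ref{rep-ctl} are the $V_{\blambda}$ with $\blambda = (\blambda^{(1)},\ldots,\blambda^{(d)}) \in \mathcal{P}(d,n)$ such that $\blambda^{(1)}$ has at most two columns (with no constraint on $\blambda^{(2)},\ldots,\blambda^{(d)}$). So I would write
$${\rm dim}_{\C(q)}(\C(q){\rm CTL}_{d,n}(q)) = \sum_{\blambda} \bigl(\dim_{\C(q)} V_{\blambda}\bigr)^2,$$
the sum being over such $\blambda$, and $\dim_{\C(q)} V_{\blambda}$ being the number of standard $d$-tableaux of shape $\blambda$.

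Next I would stratify by $k := |\blambda^{(1)}|$, which ranges over $0,\ldots,n$, and by the individual sizes $\mu_i := |\blambda^{(i)}|$ for $i \geq 2$, so that $\mu_2 + \cdots + \mu_d = n-k$. As in the proof of Theorem \ref{res2}, the number of standard $d$-tableaux of shape $\blambda$ factors as a multinomial coefficient $\frac{n!}{k!\,\mu_2!\cdots\mu_d!}$ (counting the ways to distribute $\{1,\ldots,n\}$ among the $d$ diagrams) times $\prod$ of the numbers of standard tableaux of each $\blambda^{(i)}$. Squaring and summing, the $k!\,\mu_2!\cdots\mu_d!$ in the denominator combine with the sums over partition shapes. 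Summing $\bigl(\# \text{SYT of shape }\nu\bigr)^2$ over all $\nu \vdash k$ with at most two columns gives $C_k$ by Proposition \ref{dim-TL}, while summing $\bigl(\# \text{SYT of shape }\nu\bigr)^2$ over all $\nu \vdash \mu_i$ with no constraint gives $\mu_i!$ (since $\sum_{\nu \vdash m} (f^\nu)^2 = m!$ is the decomposition of $\C\mathfrak{S}_m$).

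Putting this together, the contribution of a fixed $k$ is $\bigl(\frac{n!}{k!}\bigr)^2 C_k$ times $\sum_{\mu_2+\cdots+\mu_d = n-k} \bigl(\frac{(n-k)!}{\mu_2!\cdots\mu_d!}\bigr)^2 \cdot \frac{\mu_2!\cdots\mu_d!}{(n-k)!}\cdot(n-k)!$; wait — more carefully, after extracting the $k$-block one is left with $\sum \frac{((n-k)!)^2}{k!^2(\mu_2!\cdots\mu_d!)^2}$ times $\prod \mu_i!$ summed, which is $\frac{((n-k)!)^2}{k!^2}\sum_{\mu_2+\cdots+\mu_d=n-k}\frac{1}{\mu_2!\cdots\mu_d!}$. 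The multinomial identity $\sum_{\mu_2+\cdots+\mu_d=n-k}\binom{n-k}{\mu_2,\ldots,\mu_d} = (d-1)^{n-k}$ then gives $\sum \frac{1}{\mu_2!\cdots\mu_d!} = \frac{(d-1)^{n-k}}{(n-k)!}$. Combining with $\binom{n}{k}^2 = \frac{(n!)^2}{(k!)^2((n-k)!)^2}$ yields exactly the stated formula $\sum_{k=0}^n \binom{n}{k}^2 C_k (d-1)^{n-k}(n-k)!$. The main thing to get right — the only place where a slip is likely — is the bookkeeping of which factorials cancel against which in the multinomial/Young-subgroup step; it is purely mechanical but must be done carefully, and I would present it by first writing $\dim_{\C(q)} V_{\blambda} = \binom{n}{k}\binom{n-k}{\mu_2,\ldots,\mu_d}\prod_i f^{\blambda^{(i)}}$ and then interchanging sums exactly as in the proof of Theorem \ref{res2}, invoking $\sum_{\nu\vdash m,\ \leq 2\ \text{cols}}(f^\nu)^2 = C_m$ and $\sum_{\nu\vdash m}(f^\nu)^2 = m!$ as the two key lemmas.
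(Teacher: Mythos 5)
Your proposal is correct and follows essentially the same route as the paper: semisimplicity of $\C(q){\rm CTL}_{d,n}(q)$, the classification of Theorem \ref{rep-ctl}, stratification by $k=|\blambda^{(1)}|$, and Proposition \ref{dim-TL} giving the factor $C_k$. The only cosmetic difference is that the paper obtains the remaining factor $(d-1)^{n-k}(n-k)!$ in one stroke as $\dim_{\C(q)}\bigl(\C(q){\rm Y}_{d-1,n-k}(q)\bigr)$, grouping $(\blambda^{(2)},\ldots,\blambda^{(d)})$ into a single $(d-1)$-partition, whereas you re-derive the same quantity by splitting further over compositions $(\mu_2,\ldots,\mu_d)$ and combining $\sum_{\nu\vdash m}(\dim V_\nu)^2=m!$ with the multinomial identity --- the same computation, carried out at one level of granularity finer.
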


\begin{proof}
Let us denote by $\mathcal{P}^{\leq 2}(n)$ the set of partitions of of $n$ whose Young diagram  has at most two columns. By Theorem \ref{rep-ctl}, and since the algebra $\C(q){\rm CTL}_{d,n}(q)$ is semisimple, 
we have
$${\rm dim}_{\C(q)}(\C(q){\rm CTL}_{d,n}(q)) =
\sum_{ \begin{array}{c}\scriptstyle{\blambda \in \mathcal{P}(d,n)}\\
 \scriptstyle{\blambda^{(1)} \in \mathcal{P}^{\leq 2}(| \blambda^{(1)} |)}\end{array}}
\mathrm{dim}_{\C(q)} (V_{\blambda})^2,$$
where $\mathrm{dim}_{\C(q)} (V_{\blambda})$ is the number of standard $d$-tableaux of shape $\blambda$.

Let $k \in \{0,1,\ldots,n\}$. Let
$\blambda \in \mathcal{P}(d,n)$ be such that $\blambda^{(1)} \in \mathcal{P}^{\leq 2}(k)$, where $k = | \blambda^{(1)} |$.
Since there are $\binom{n}{k}$ ways to choose the numbers in $\{1,\ldots,n\}$ that will be placed in the nodes of the Young diagram of $\blambda^{(1)}$, we have
$$  \mathrm{dim}_{\C(q)} (V_{\blambda}) = \binom{n}{k} \, \mathrm{dim}_{\C(q)} (V_{\blambda^{(1)}}) \,
 \mathrm{dim}_{\C(q)} (V_{(\blambda^{(2)},\ldots, \blambda^{(d)})}),$$ 
where $V_{\blambda^{(1)}}$ is the irreducible representation of $\C(q){\rm TL}_{k}(q)$ labelled by $\blambda^{(1)}$ and
$V_{(\blambda^{(2)},\ldots, \blambda^{(d)})}$ is the irreducible representation of $\C(q){\rm Y}_{d-1,n-k}(q)$ labelled by
 $(\blambda^{(2)},\ldots, \blambda^{(d)}) \in \mathcal{P}(d-1,n-k)$.
 We deduce that
 $$  {\rm dim}_{\C(q)}(\C(q){\rm CTL}_{d,n}(q)) = \sum_{k=0}^n \binom{n}{k}^2  \left(
\sum_{ \begin{array}{c}\scriptstyle{\lambda \in  \mathcal{P}^{\leq 2}(k)}\\
 \scriptstyle{\bmu \in \mathcal{P}(d-1,n-k)}\end{array}}
  \mathrm{dim}_{\C(q)} (V_{\lambda})^2 \,
 \mathrm{dim}_{\C(q)} (V_{\bmu})^2\right).
 $$
 Now, we have that the sum inside the parenthesis is equal to
 $$ \left( \sum_{\lambda \in  \mathcal{P}^{\leq 2}(k)}
  \mathrm{dim}_{\C(q)} (V_{\lambda})^2 \right) \left(
  \sum_{\bmu \in \mathcal{P}(d-1,n-k)}
 \mathrm{dim}_{\C(q)} (V_{\bmu})^2 \right) = C_k\, (d-1)^{n-k} (n-k)! \, ,$$
 whence we obtain the desired result.
\end{proof}

\begin{rem}{\rm
Note that the dimension of ${\rm CTL}_{d,n}(q)$ can be rewritten, using the set 
${\rm Comp}_d(n)$, in the following way:
$${\rm dim}_{\C(q)}(\C(q){\rm CTL}_{d,n}(q)) = \sum_{\mu \in {\rm Comp}_d(n)} \left( \frac{n!}{\mu_1!\mu_2!\ldots \mu_d!}\right)^2 C_{\mu_1} \, \mu_2!   \ldots \mu_d! ,$$
using the fact that
$$ \sum_{\mu \in {\rm Comp}_d(n)}  \frac{n!}{\mu_1!\mu_2!\ldots \mu_d!} = d^n.$$
}
\end{rem}

\subsection{An isomorphism theorem for the  Complex Reflection Temperley--Lieb algebra ${\rm CTL}_{d,n}(q)$}
We  will now reuse the notation of Section \ref{sec-iso}, and assume again, without loss of generality, that $\xi_1=1$.
For all $j=1,\ldots,n$ and $\chi \in \Irr(\mathcal{A}_{d,n})$, we have
\begin{equation}\label{TjEchi}
T_j E_\chi = E_\chi T_j= \frac{1}{d}\sum_{s=0}^{d-1} \chi(t_j)^s E_\chi =  
\left\{ \begin{array}{cl}
E_\chi & \text{ if }  \chi(t_j) = 1 ; \\ & \\
0  & \text{ if }  \chi(t_j) \neq 1.
\end{array}\right.
\end{equation}

\begin{prop}\label{imageofpsi2}
Let $\mu \in {\rm Comp}_d(n)$ and $k \in \{1,\ldots,m_\mu\}$. We have
$$
\Psi_\mu(E_{\chi_k^\mu}T_1e_1e_2g_{1,2}) = 
\left\{ \begin{array}{cl}
G_{1,2} \,M_{k,k} & \text{ if }  \chi_k^\mu(t_1)=\chi_k^\mu(t_2)=\chi_k^\mu(t_3)=1; \\ & \\
0  & \text{ otherwise } .
\end{array}\right.
$$
Thus, $\Psi_\mu(E_\mu T_1 e_1e_2g_{1,2})$ is a diagonal matrix in $ {\rm Mat}_{m_\mu}(\mathcal{H}^\mu(q))$
with all non-zero coefficients being equal to $G_{1,2}$.
\end{prop}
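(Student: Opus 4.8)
The plan is to run the argument of Proposition~\ref{imageofpsi} with the extra idempotent $T_1$ inserted, keeping track of the constraint it imposes. Since $T_1$ commutes with $e_1e_2g_{1,2}$ (as observed in the definition of ${\rm CTL}_{d,n}(q)$), equation \eqref{TjEchi} with $\xi_1=1$ gives $E_{\chi_k^\mu}T_1=E_{\chi_k^\mu}$ when $\chi_k^\mu(t_1)=1$ and $E_{\chi_k^\mu}T_1=0$ otherwise; combining this with \eqref{i-1} shows that $E_{\chi_k^\mu}T_1e_1e_2g_{1,2}$ equals $E_{\chi_k^\mu}g_{1,2}$ if $\chi_k^\mu(t_1)=\chi_k^\mu(t_2)=\chi_k^\mu(t_3)=1$, and $0$ in every other case. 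So it only remains to identify $\Psi_\mu(E_{\chi_k^\mu}g_{1,2})$ in the first case, and the point is that here it is exactly $G_{1,2}M_{k,k}$, not merely $G_{i,i+1}M_{k,k}$ for some $i$ as in Proposition~\ref{imageofpsi}.

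First I would recall, exactly as in \eqref{almost 1} and the lines following it, that $\Psi_\mu(E_{\chi_k^\mu}g_{1,2})=\bigl(\sum_{w\in\langle s_1,s_2\rangle}G_{\pi_{\mu,k}^{-1}w\pi_{\mu,k}}\bigr)M_{k,k}$ and that this sum is $G_{i,i+1}$ for the $i$ with $\{i,i+1\}\subseteq J^\mu$ determined by $\pi_{\mu,k}^{-1}s_1\pi_{\mu,k}=s_i$ and $\pi_{\mu,k}^{-1}s_2\pi_{\mu,k}=s_{i+1}$. The heart of the proof is then to show that the hypothesis $\chi_k^\mu(t_1)=\chi_k^\mu(t_2)=\chi_k^\mu(t_3)=1$ forces $i=1$. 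For this I would use that $\pi_{\mu,k}$ is the minimal-length representative of its coset in $\mathfrak{S}_n/\mathfrak{S}_\mu$, hence $\pi_{\mu,k}(j)<\pi_{\mu,k}(j+1)$ for all $j\in J^\mu$; in particular $\pi_{\mu,k}$ is increasing on $\{1,\dots,\mu_1\}$, noting that $\mu_1\ge 3$ since $\chi_k^\mu$ takes the value $1=\xi_1$ at least three times and $\mu_1=\#\{j\mid\chi_k^\mu(t_j)=\xi_1\}$. Because $\chi_k^\mu(t_j)=\chi_1^\mu(t_{\pi_{\mu,k}^{-1}(j)})$ and $\{j\mid\chi_1^\mu(t_j)=1\}=\{1,\dots,\mu_1\}$, the hypothesis says precisely that $\{1,2,3\}\subseteq\pi_{\mu,k}(\{1,\dots,\mu_1\})$; since $\pi_{\mu,k}$ is increasing on $\{1,\dots,\mu_1\}$, its image there is a set of positive integers whose three smallest elements are $\pi_{\mu,k}(1)<\pi_{\mu,k}(2)<\pi_{\mu,k}(3)$, and a set containing $\{1,2,3\}$ has $1,2,3$ as its three smallest elements, so $\pi_{\mu,k}$ fixes $1,2$ and $3$. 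Consequently $\pi_{\mu,k}^{-1}s_1\pi_{\mu,k}=s_1$ and $\pi_{\mu,k}^{-1}s_2\pi_{\mu,k}=s_2$, whence $\sum_{w\in\langle s_1,s_2\rangle}G_{\pi_{\mu,k}^{-1}w\pi_{\mu,k}}=\sum_{w\in\langle s_1,s_2\rangle}G_w=G_{1,2}$ and $\Psi_\mu(E_{\chi_k^\mu}g_{1,2})=G_{1,2}M_{k,k}$.

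Putting the two cases together yields the displayed formula for $\Psi_\mu(E_{\chi_k^\mu}T_1e_1e_2g_{1,2})$; summing over $k=1,\dots,m_\mu$ and using linearity of $\Psi_\mu$ together with $E_\mu=\sum_k E_{\chi_k^\mu}$ then gives the statement for $E_\mu T_1e_1e_2g_{1,2}$: a diagonal matrix in ${\rm Mat}_{m_\mu}(\mathcal{H}^\mu(q))$ whose $(k,k)$-entry is $G_{1,2}$ exactly when $\chi_k^\mu$ fixes $t_1,t_2,t_3$ at the value $1$, and $0$ otherwise. The only step that is not pure bookkeeping — and hence the main obstacle — is the combinatorial observation that minimality of the coset representative $\pi_{\mu,k}$, together with the hypothesis on $\chi_k^\mu$, pins $\pi_{\mu,k}$ down to fix $1,2,3$; once that is in hand, everything reduces to Proposition~\ref{imageofpsi} and \eqref{TjEchi}.
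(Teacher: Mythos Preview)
Your proof is correct and reaches the same conclusion as the paper's, but the key combinatorial step is handled differently. Both arguments reduce to showing that, under the hypothesis $\chi_k^\mu(t_1)=\chi_k^\mu(t_2)=\chi_k^\mu(t_3)=1$, the minimal coset representative $\pi_{\mu,k}$ fixes $1,2,3$ (equivalently, lies in $\langle s_4,\dots,s_{n-1}\rangle$). The paper does this Coxeter-theoretically: it observes that some $\sigma\in\langle s_4,\dots,s_{n-1}\rangle$ satisfies $\chi_k^\mu=\sigma(\chi_1^\mu)$, writes $\sigma=\pi_{\mu,k}x$ with lengths adding, and invokes the fact that every reduced expression of an element of a standard parabolic subgroup uses only its simple generators to conclude $\pi_{\mu,k}\in\langle s_4,\dots,s_{n-1}\rangle$. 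You instead use the concrete combinatorial characterisation that a minimal left coset representative is increasing on each $\mathfrak{S}_\mu$-block, and deduce directly that $\pi_{\mu,k}(1)=1$, $\pi_{\mu,k}(2)=2$, $\pi_{\mu,k}(3)=3$. Your route is a bit more elementary and self-contained; the paper's route stays closer to the Coxeter-group machinery of \cite{GePf} already invoked elsewhere. One small expository point: when you recall Proposition~\ref{imageofpsi} you write $\pi_{\mu,k}^{-1}s_2\pi_{\mu,k}=s_{i+1}$, whereas that proposition only gives $s_{i\pm 1}$; this does not affect your argument, since you then prove $\pi_{\mu,k}$ fixes $1,2,3$ directly without using that claim.
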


\begin{proof}
Combining \eqref{TjEchi} with \eqref{i-1} yields:
\begin{equation}\label{Tjplus}
E_{\chi_k^\mu}T_1e_1e_2g_{1,2} = 
\left\{ \begin{array}{cl}
E_{\chi_k^\mu}g_{1,2} & \text{ if }  \chi_k^\mu(t_1)=\chi_k^\mu(t_2)=\chi_k^\mu(t_3)=1; \\ & \\
0  & \text{ otherwise } .
\end{array}\right.
\end{equation}
If now $\chi_k^\mu(t_1)=\chi_k^\mu(t_2)=\chi_k^\mu(t_3)=1$, then the following hold: \smallbreak
\begin{itemize}
\item $\mu_1 > 2$\,; \smallbreak
\item $w(\chi_k^\mu)=\chi_k^\mu$ for all $w \in {\langle s_1,s_2\rangle} \subseteq \mathfrak{S}_n$\,; \smallbreak
\item there exists
$\sigma \in {\langle s_4,\ldots,s_{n-1} \rangle} \subseteq \mathfrak{S}_n$ such that
$\chi_k^{\mu} = \sigma(\chi_1^{\mu})$, since
$\chi_1^\mu(t_1)=\chi_1^\mu(t_2)=\chi_1^\mu(t_3)=1$. 
\end{itemize}
We have $\sigma \mathfrak{S}_\mu = \pi_{\mu,k}  \mathfrak{S}_\mu$, and so there exists $x \in \mathfrak{S}_\mu$ such that
$\sigma=\pi_{\mu, k}x$ and $\ell(\sigma)=\ell(\pi_{\mu,k})+\ell(x)$. Thus, the product of a reduced expression for $\pi_{\mu,k}$ and a reduced expression for $x$ yields a reduced expression for $\sigma$. However, all reduced expressions for $\sigma$ have all their
factors in $\{ s_4,\ldots,s_{n-1} \}$  (see, for example, \cite[Proposition 1.2.10]{GePf}). 
Therefore, $\pi_{\mu,k} \in {\langle s_4,\ldots,s_{n-1} \rangle}$, whence we obtain that $\pi_{\mu,k}w=w\pi_{\mu,k}$ for all $w \in {\langle s_1,s_2\rangle} $.
Thus, similarly to \eqref{almost 1}, we have
$$ \Psi_\mu(E_{\chi_k^\mu}g_{1,2}) = \sum_{w \in {\langle s_1,s_2\rangle}}\Psi_\mu(E_{\chi_k^\mu}g_w) = 
\sum_{w \in{\langle s_1,s_2\rangle}} G_{\pi_{\mu,k}^{-1}w\pi_{\mu,k}} M_{k,k} =
\sum_{w \in{\langle s_1,s_2\rangle}} G_{w} M_{k,k} = G_{1,2}\, M_{k,k}. $$
Combining this with \eqref{Tjplus} yields the desired result.
\end{proof}

\begin{exmp}{\rm Let us consider again the case $d=2$ and $n=4$ as in Example \ref{example1}. We have 
$$
\Psi_\mu(E_{\chi_k^\mu}T_1e_1e_2g_{1,2}) = 
\left\{ \begin{array}{cl}
G_{1,2} & \text{ if $\mu=(4,0)$ }, \\ & \\
G_{1,2}\,M_{1,1}  & \text{ if $\mu=(3,1)$ and $k=1$ }, \\  & \\
0  & \text{ otherwise } .
\end{array}\right.$$}
\end{exmp}

We now consider the natural surjection $\rho_1 : \mathcal{H}_{\mu_1}(q) \twoheadrightarrow \mathcal{H}_{\mu_1}(q)/I_{\mu_1} \cong {\rm TL}_{\mu_1}(q)$, where $I_{\mu_1}$ is the ideal generated by $G_{1,2}$ if $\mu_1 >2$ and 
$I_{\mu_1} = \{0\}$ if $\mu_1 \leq 2$. 
Let us denote by $\mathcal{H}^{(\mu_2,\ldots,\mu_d)}(q)$ the tensor
product of Iwahori--Hecke algebras $\mathcal{H}_{\mu_2}(q) \otimes \cdots \otimes \mathcal{H}_{\mu_d}(q)$. 
Then the map
${\rm P}_{1}:= \rho_1 \otimes {\rm id}_{\mathcal{H}^{(\mu_2,\ldots,\mu_d)}(q)} $
is a surjective $\C[q,q^{-1}]$-algebra homomorphism 
$\mathcal{H}^\mu(q)\twoheadrightarrow {\rm TL}_{\mu_1}(q) \otimes 
\mathcal{H}^{(\mu_2,\ldots,\mu_d)}(q)$.
This in turn induces a surjective $\C[q,q^{-1}]$-algebra homomorphism
${\rm Mat}_{m_\mu}(\mathcal{H}^\mu(q))\twoheadrightarrow {\rm Mat}_{m_\mu}({\rm TL}_{\mu_1}(q) \otimes \mathcal{H}^{(\mu_2,\ldots,\mu_d)}(q))$, which we also denote by ${\rm P}_{1}$. 
We obtain that
$$ {\rm P}_{1}\circ \Psi_\mu : E_\mu \YH \rightarrow {\rm Mat}_{m_\mu}({\rm TL}_{\mu_1}(q) \otimes  \mathcal{H}^{(\mu_2,\ldots,\mu_d)}(q))$$
  is a surjective $\C[q,q^{-1}]$-algebra homomorphism.

In order for ${\rm P}_{1}\circ \Psi_\mu$ to factor through $E_\mu \YH/ E_\mu \mathfrak{I}_{d,n} \cong E_\mu {\rm CTL}_{d,n}(q)$, all elements of $E_\mu \mathfrak{I}_{d,n}$ have to belong to the kernel of ${\rm P}_{1}\circ \Psi_\mu$. Since $\mathfrak{I}_{d,n}$ is the ideal generated by the element
$T_1e_1e_2 g_{1,2}$, it is enough to show that $({\rm P}_{1}\circ \Psi_\mu)(T_1e_1e_2 g_{1,2}) =0$. This is immediate by Proposition 
\ref{imageofpsi2}. Hence, if we denote by $\Theta^{\mu}$ the natural surjection
 $E_\mu \YH \twoheadrightarrow E_\mu \YH/ E_\mu \mathfrak{I}_{d,n} \cong E_\mu{\rm CTL}_{d,n}(q)$, there exists
 a unique $\C[q,q^{-1}]$-algebra homomorphism $\overline{\psi}_\mu : E_\mu{\rm CTL}_{d,n}(q) \rightarrow {\rm Mat}_{m_\mu}({\rm TL}_{\mu_1}(q) \otimes  \mathcal{H}^{(\mu_2,\ldots,\mu_d)}(q))$ such that
 the following diagram is commutative:
 \begin{equation}\label{diag3} \,\,\,\, \,\,\,\, \,\,\,\, \,\,\,\, \,\,\,\,  \,\,\,\, \,\,\,\, 
 \diagram E_\mu \YH \dto^{\Theta^{\mu}}&\rto^{\Psi_\mu}& &
          {\rm Mat}_{m_\mu}(\mathcal{H}^\mu(q)) \dto^{{\rm P}_{1}}\\
            E_\mu {\rm CTL}_{d,n}(q) & \rto^{\overline{\psi}_\mu} & & {\rm Mat}_{m_\mu}({\rm TL}_{\mu_1}(q) \otimes  \mathcal{H}^{(\mu_2,\ldots,\mu_d)}(q))& & \enddiagram 
            \end{equation}
Since ${\rm P}_{1}\circ \Psi_\mu$ is surjective, $\overline{\psi}_\mu$ is also surjective.

Let us now consider the surjective 
$\C[q,q^{-1}]$-algebra homomorphism:
$$ \Theta^{\mu}\circ \Phi_\mu : {\rm Mat}_{m_\mu}(\mathcal{H}^\mu(q)) \rightarrow { E_\mu {\rm CTL}_{d,n}(q)},$$
where $\Phi_\mu$ is the inverse of $\Psi_\mu$.
In order for $ \Theta^{\mu}\circ \Phi_\mu$ to factor through ${\rm Mat}_{m_\mu}({\rm TL}_{\mu_1}(q) \otimes  \mathcal{H}^{(\mu_2,\ldots,\mu_d)}(q))$, we have to show
that $G_{1,2} M_{k,l}$ belongs to the kernel of  $\Theta^{\mu}\circ \Phi_\mu$
if $\mu_1 >2$.
Since $$G_{1,2} M_{k,l} = M_{k,1} G_{1,2} M_{1,1} M_{1,l}$$
and $\Theta^{\mu}\circ \Phi_\mu$ is an homomorphism of $\C[q,q^{-1}]$-algebras, it is enough to show that
 $(\Theta^{\mu}\circ \Phi_\mu) ( G_{1,2} M_{1,1}) =0$.
 
 Similarly to \eqref{almost 2}, by definition of $\Phi_\mu$, and since $\pi_{\mu,1}=1$, we have
 $$\Phi_\mu(G_{1,2} M_{1,1}) =   E_{\chi_1^\mu}g_{1,2} E_{\chi_1^\mu}.$$ 
Now, by definition of $\chi_1^\mu$, we have
$$ \chi_1^\mu(t_{1} ) = \cdots = \chi_1^\mu(t_{ \mu_1} ) =\xi_1=1.$$
Since $\mu_1>2$, we deduce that
$$ \chi_1^\mu(t_1) = \chi_1^\mu(t_{2})=\chi_1^\mu(t_{3}) = 1.$$
Following \eqref{Tjplus}, we obtain
$$\Phi_\mu(G_{1,2} M_{1,1}) =   E_{\chi_1^\mu}g_{1,2} E_{\chi_1^\mu} = E_{\chi_1^\mu}T_1e_1 e_{2} g_{1,2} E_{\chi_1^\mu}.$$
Since $T_1 e_1 e_{2} g_{1,2}  \in \mathfrak{I}_{d,n}$, we deduce that
$(\Theta^{\mu}\circ \Phi_\mu) ( G_{1,2}\, M_{1,1}) =0$, as desired.

We conclude that there exists
 a unique $\C[q,q^{-1}]$-algebra homomorphism $$\overline{\phi}_\mu :  {\rm Mat}_{m_\mu}({\rm TL}_{\mu_1}(q) \otimes  \mathcal{H}^{(\mu_2,\ldots,\mu_d)}(q)) \rightarrow E_\mu {\rm CTL}_{d,n}(q) $$ such that the following diagram is commutative:
 \begin{equation}\label{diag4} \,\,\,\, \,\,\,\, \,\,\,\, \,\,\,\, \,\,\,\,  \,\,\,\, \,\,\,\, 
 \diagram E_\mu \YH \dto^{\Theta^{\mu}}& & \lto_{\Phi_\mu}&
          {\rm Mat}_{m_\mu}(\mathcal{H}^\mu(q)) \dto^{{\rm P}_{1}}\\
            E_\mu{\rm CTL}_{d,n}(q)  & & \lto_{\overline{\phi}_\mu}  & {\rm Mat}_{m_\mu}({\rm TL}_{\mu_1}(q) \otimes  \mathcal{H}^{(\mu_2,\ldots,\mu_d)}(q)) & & \enddiagram 
            \end{equation}
Since $\Theta^{\mu}\circ \Phi_\mu$ is surjective, $\overline{\phi}_\mu$ is also surjective.

Similarly to Theorem \ref{thm-iso-CP}, we obtain the following result:

\begin{thm} \label{thm-iso-CP2}
Let $\mu \in {\rm Comp}_d(n)$.  The linear map $\overline{\psi}_\mu$
is an isomorphism of $\C[q,q^{-1}]$-algebras with inverse map $\overline{\phi}_\mu$. As a consequence, the map
$$\overline{\psi}_n:= \bigoplus_{\mu \in {\rm Comp}_d(n)}  \overline{\psi}_\mu :  {\rm CTL}_{d,n}(q) \rightarrow \bigoplus_{\mu \in {\rm Comp}_d(n)}  {\rm Mat}_{m_\mu}({\rm TL}_{\mu_1}(q) \otimes  \mathcal{H}^{(\mu_2,\ldots,\mu_d)}(q))$$
is also an isomorphism of $\C[q,q^{-1}]$-algebras, with inverse map
$$\overline{\phi}_n:= \bigoplus_{\mu \in {\rm Comp}_d(n)}  \overline{\phi}_\mu :   \bigoplus_{\mu \in {\rm Comp}_d(n)}  {\rm Mat}_{m_\mu}({\rm TL}_{\mu_1}(q) \otimes  \mathcal{H}^{(\mu_2,\ldots,\mu_d)}(q)) \rightarrow  {\rm CTL}_{d,n}(q).$$
\end{thm}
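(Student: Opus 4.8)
The plan is to carry out exactly the formal diagram chase used in the proof of Theorem~\ref{thm-iso-CP}, now with the quotient map $\Theta^{\mu}$, the projection ${\rm P}_1$, and the maps $\overline{\psi}_\mu$, $\overline{\phi}_\mu$ playing the roles of $\theta^{\mu}$, $\rho^{\mu}$, $\psi_\mu$, $\phi_\mu$ respectively. All the substantive work has already been done: Proposition~\ref{imageofpsi2} together with the two factorisation arguments preceding the statement establishes that the commutative squares \eqref{diag3} and \eqref{diag4} exist, so what remains is purely formal.

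First I would read off from the commutativity of \eqref{diag3} and \eqref{diag4} the relations
$$ {\rm P}_1 \circ \Psi_\mu = \overline{\psi}_\mu \circ \Theta^{\mu} \qquad \text{and} \qquad \Theta^{\mu}\circ \Phi_\mu = \overline{\phi}_\mu \circ {\rm P}_1. $$
Composing the first relation with $\Phi_\mu$ on the right and the second with $\Psi_\mu$ on the right, and then invoking Theorem~\ref{our iso}, which gives $\Psi_\mu \circ \Phi_\mu = {\rm id}_{{\rm Mat}_{m_\mu}(\mathcal{H}^\mu(q))}$ and $\Phi_\mu \circ \Psi_\mu = {\rm id}_{E_\mu \YH}$, I obtain
$$ {\rm P}_1 = \overline{\psi}_\mu \circ \overline{\phi}_\mu \circ {\rm P}_1 \qquad \text{and} \qquad \Theta^{\mu} = \overline{\phi}_\mu \circ \overline{\psi}_\mu \circ \Theta^{\mu}. $$
Since ${\rm P}_1$ and $\Theta^{\mu}$ are surjective, they may be cancelled on the right, yielding
$$\overline{\psi}_\mu \circ \overline{\phi}_\mu = {\rm id}_{{\rm Mat}_{m_\mu}({\rm TL}_{\mu_1}(q) \otimes \mathcal{H}^{(\mu_2,\ldots,\mu_d)}(q))} \qquad \text{and} \qquad \overline{\phi}_\mu \circ \overline{\psi}_\mu = {\rm id}_{E_\mu {\rm CTL}_{d,n}(q)};$$
hence $\overline{\psi}_\mu$ is an isomorphism of $\C[q,q^{-1}]$-algebras with inverse $\overline{\phi}_\mu$.

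For the global statement I would note that, because the $E_\mu$ form a complete set of central orthogonal idempotents of $\YH$, the ideal $\mathfrak{I}_{d,n}$ splits as $\bigoplus_{\mu} E_\mu \mathfrak{I}_{d,n}$, so that ${\rm CTL}_{d,n}(q) = \bigoplus_{\mu \in {\rm Comp}_d(n)} E_\mu {\rm CTL}_{d,n}(q)$ as a direct sum of two-sided ideals, and likewise the target decomposes blockwise; taking $\overline{\psi}_n = \bigoplus_\mu \overline{\psi}_\mu$ and $\overline{\phi}_n = \bigoplus_\mu \overline{\phi}_\mu$ then gives the asserted isomorphism. I do not expect any genuine obstacle here: the one point deserving a line of justification is this compatibility of the block decompositions of ${\rm Y}_{d,n}(q)$ and ${\rm CTL}_{d,n}(q)$, and it is immediate from the centrality of the $E_\mu$. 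Everything else --- in particular that $\Psi_\mu$ descends modulo the ideal generated by $T_1 e_1 e_2 g_{1,2}$ and that $\Phi_\mu$ descends modulo the ideal generated by $G_{1,2}$ inside the first tensor factor --- has already been checked in Proposition~\ref{imageofpsi2} and in the two paragraphs constructing $\overline{\psi}_\mu$ and $\overline{\phi}_\mu$.
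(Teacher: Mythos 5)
Your argument is exactly the one the paper uses: Theorem \ref{thm-iso-CP2} is proved ``similarly to Theorem \ref{thm-iso-CP}'', i.e.\ by the same diagram chase through \eqref{diag3} and \eqref{diag4}, composing with $\Phi_\mu$ and $\Psi_\mu$, invoking Theorem \ref{our iso}, and cancelling the surjections $\Theta^{\mu}$ and ${\rm P}_1$, with the global statement following from the central orthogonal idempotents $E_\mu$. Your write-up is correct and matches the paper's intended proof, including the harmless extra remark on the compatibility of the block decompositions.
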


\subsection{A basis for the Complex ReflectionTemperley--Lieb algebra $ {\rm CTL}_{d,n}(q)$}

By Theorem \ref{thm-iso-CP2}, using the notation of \S \ref{subs-basis}, we obtain the following basis for ${\rm CTL}_{d,n}(q)$:

\begin{prop}\label{FTLbasis}
The set
$$ \left\{  \overline{\phi}_\mu (b_1b_2\ldots b_d\, M_{k,l}) \,|\, \mu \in {\rm Comp}_d(n), b_1 \in \mathcal{B}_{{\rm TL}_{\mu_1}(q)},
b_i \in \mathcal{B}_{\mathcal{H}_{\mu_i}(q)}
  \text{ for all }
i=2,\ldots,d, 1 \leq k,l \leq m_\mu     \right\}$$
is a basis of  ${\rm CTL}_{d,n}(q)$ as a $\C[q,q^{-1}]$-module. 
In particular, ${\rm CTL}_{d,n}(q)$ is a free $\C[q,q^{-1}]$-module of rank 
$$\sum_{\mu \in {\rm Comp}_d(n)} m_\mu^2\, C_{\mu_1} {\mu_2}! \ldots {\mu_d}!.$$
\end{prop}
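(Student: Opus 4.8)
The plan is to pull a basis back along the inverse isomorphism $\overline{\phi}_n$ of Theorem~\ref{thm-iso-CP2}: one writes down an explicit $\C[q,q^{-1}]$-basis of the target $\bigoplus_{\mu}{\rm Mat}_{m_\mu}({\rm TL}_{\mu_1}(q)\otimes\mathcal{H}^{(\mu_2,\dots,\mu_d)}(q))$ and transports it to ${\rm CTL}_{d,n}(q)$, exactly as a basis of $\FTL$ is obtained from Theorem~\ref{thm-iso-CP}.

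First I would record two elementary facts over $R:=\C[q,q^{-1}]$: (i) if $A$ is a free $R$-module with basis $\mathcal{B}_A$, then ${\rm Mat}_m(A)$ is a free $R$-module with basis $\{b\,M_{k,l}\mid b\in\mathcal{B}_A,\ 1\le k,l\le m\}$; and (ii) if $A_1,\dots,A_r$ are free $R$-modules with bases $\mathcal{B}_{A_1},\dots,\mathcal{B}_{A_r}$, then $A_1\otimes_R\cdots\otimes_R A_r$ is free with basis $\{a_1\otimes\cdots\otimes a_r\mid a_i\in\mathcal{B}_{A_i}\}$. By \S\ref{subs-basis}, ${\rm TL}_{\mu_1}(q)$ is free with basis $\mathcal{B}_{{\rm TL}_{\mu_1}(q)}$ of cardinality $C_{\mu_1}$; by \S\ref{Young}, each $\mathcal{H}_{\mu_i}(q)$ is free with basis $\mathcal{B}_{\mathcal{H}_{\mu_i}(q)}=\{G_w\mid w\in\mathfrak{S}_{\mu_i}\}$ of cardinality $\mu_i!$, hence so is $\mathcal{H}^{(\mu_2,\dots,\mu_d)}(q)=\mathcal{H}_{\mu_2}(q)\otimes\cdots\otimes\mathcal{H}_{\mu_d}(q)$ by (ii), with basis the products $b_2\cdots b_d$. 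Applying (ii) once more and then (i), the algebra ${\rm Mat}_{m_\mu}({\rm TL}_{\mu_1}(q)\otimes\mathcal{H}^{(\mu_2,\dots,\mu_d)}(q))$ is a free $R$-module with basis $\{\,b_1b_2\cdots b_d\,M_{k,l}\mid b_1\in\mathcal{B}_{{\rm TL}_{\mu_1}(q)},\ b_i\in\mathcal{B}_{\mathcal{H}_{\mu_i}(q)}\ (2\le i\le d),\ 1\le k,l\le m_\mu\,\}$, of cardinality $m_\mu^2\,C_{\mu_1}\,\mu_2!\cdots\mu_d!$.

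Next I would take the disjoint union of these bases over $\mu\in{\rm Comp}_d(n)$, which is an $R$-basis of $\bigoplus_{\mu}{\rm Mat}_{m_\mu}({\rm TL}_{\mu_1}(q)\otimes\mathcal{H}^{(\mu_2,\dots,\mu_d)}(q))$. Since $\overline{\phi}_n=\bigoplus_\mu\overline{\phi}_\mu$ is an isomorphism of $\C[q,q^{-1}]$-algebras by Theorem~\ref{thm-iso-CP2}, it is in particular an $R$-module isomorphism, so it carries this basis bijectively onto the set displayed in the statement; that set is therefore an $R$-basis of ${\rm CTL}_{d,n}(q)$, and ${\rm CTL}_{d,n}(q)$ is free of the asserted rank $\sum_{\mu\in{\rm Comp}_d(n)}m_\mu^2\,C_{\mu_1}\,\mu_2!\cdots\mu_d!$. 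One can sanity-check this against the formula for $\dim_{\C(q)}(\C(q){\rm CTL}_{d,n}(q))$ proved earlier, using $m_\mu=n!/(\mu_1!\cdots\mu_d!)$.

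There is no real obstacle: once Theorem~\ref{thm-iso-CP2} is available, the argument is a word-for-word transcription of the $\FTL$ case with ${\rm TL}^\mu(q)$ replaced by ${\rm TL}_{\mu_1}(q)\otimes\mathcal{H}^{(\mu_2,\dots,\mu_d)}(q)$. The only minor points to keep an eye on are the degenerate indices: for $\mu_i\le 1$ one has $\mathcal{H}_{\mu_i}(q)\cong\C[q,q^{-1}]$, whose standard basis is a singleton and $\mu_i!=1$; and ${\rm TL}_{\mu_1}(q)\cong\C[q,q^{-1}]$ for $\mu_1\le 1$ while ${\rm TL}_2(q)=\mathcal{H}_2(q)$ is $2$-dimensional, so that $|\mathcal{B}_{{\rm TL}_{\mu_1}(q)}|=C_{\mu_1}$ uniformly, since $C_0=C_1=1$ and $C_2=2$. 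Thus the cardinality count is uniform in $\mu$ and matches the stated rank.
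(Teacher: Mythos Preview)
Your proposal is correct and is exactly the approach the paper takes: the proposition is stated as an immediate consequence of Theorem~\ref{thm-iso-CP2} (the paper gives no further proof beyond ``By Theorem~\ref{thm-iso-CP2}, using the notation of \S\ref{subs-basis}, we obtain the following basis''), and you have simply filled in the routine details about bases of tensor products and matrix algebras over free $\C[q,q^{-1}]$-modules.
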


\end{document}